\theoremstyle{plain}
\newtheorem{theorem}{Theorem}[section]
\newtheorem{lemma}[theorem]{Lemma}
\newtheorem{corollary}[theorem]{Corollary}
\newtheorem{proposition}[theorem]{Proposition}
\newtheorem{definition}[theorem]{Definition}
\newtheorem{example}[theorem]{Example}
\newtheorem{remark}[theorem]{Remark}
\numberwithin{equation}{section}
\numberwithin{figure}{section}
\numberwithin{table}{section}
\title{Optimal additive Schwarz methods for the $hp$-BEM: the hypersingular integral operator in 3D on locally refined meshes}
\author[puc]{T. F\"uhrer}
\ead{tofuhrer@mat.puc.cl}
\author[asc]{J. M. Melenk\corref{cor1}}
\ead{melenk@tuwien.ac.at}
\author[asc]{D. Praetorius}
\ead{dirk.praetorius@tuwien.ac.at}
\author[asc]{A. Rieder}
\ead{alexander.rieder@tuwien.ac.at}
\address[puc]{Pontificia Universidad Cat\'olica de Chile,
  Facultad de Matem\'aticas, Vicu\~na Mackenna 4860, Santiago, Chile.}
\address[asc]{Technische Universit\"at Wien, Institut f\"ur Analysis und Scientific Computing, Wiedner Hauptstra{\ss}e 8-10, A-1040 Vienna}
\date{\today}
\begin{document}
\begin{abstract}
  We propose and analyze an overlapping Schwarz preconditioner for the $p$ and $hp$ boundary element method 
  for the hypersingular integral equation in 3D.
  We consider surface triangulations consisting of triangles.
  The condition number is bounded uniformly in the mesh size $h$ and the polynomial order $p$.
  The preconditioner handles adaptively refined meshes and is based on a local multilevel preconditioner for the
  lowest order space. Numerical experiments on different geometries illustrate its robustness. 
\end{abstract}
\begin{keyword}
  $hp$-BEM, hypersingular integral equation, preconditioning, additive Schwarz method
  \MSC 65N35, 65N38, 65N55
\end{keyword}

\maketitle

\section{Introduction}
Many elliptic boundary value problems that are solved in practice are linear 
and have constant (or at least piecewise constant) coefficients. In this setting, 
the boundary element method (BEM, \cite{book_hsiao_wendland,book_sauter_schwab,book_steinbach,book_mclean})
 has established itself as an effective alternative to the finite element method (FEM). Just as in the FEM applied 
to this particular problem class, high order methods are very attractive since they
can produce rapidly convergent schemes on suitably chosen adaptive meshes.  
The discretization leads to large systems of equations, and a use of 
iterative solvers brings the question of preconditioning to the fore.  

In the present work, we study high order Galerkin discretizations of the 
hypersingular operator. This is an operator of order $1$, and we therefore have to 
expect the condition number of the system matrix to increase as the mesh size $h$ 
decreases and the approximation order $p$ increases. We present an additive overlapping Schwarz
preconditioner that offsets this degradation and results in condition numbers that 
are bounded independently of the mesh size and the approximation order. This is achieved 
by combining the recent $H^{1/2}$-stable decomposition of spaces of piecewise polynomials of degree $p$
of \cite{melenk_appendix} and the multilevel diagonal scaling preconditioner of \cite{ffps,dissTF}
for the hypersingular operator discretized by piecewise linears.

Our additive Schwarz preconditioner is based on stably decomposing the approximation space of piecewise polynomials 
into the lowest order space (i.e., piecewise linears) and spaces of higher order polynomials supported
by the vertex patches. Such stable localization procedures were first developed for the $hp$-FEM 
in \cite{pavarino_94} for meshes consisting of quadrilaterals (or, more generally, tensor product elements). 
The restriction to tensor product elements stems from the fact that the localization is achieved by exploiting
stability properties of the 1D-Gau{\ss}-Lobatto interpolation operator, which, when applied to polynomials, 
is simultaneously stable in $L^2$ and $H^1$ (see, e.g., \cite[eqns. (13.27), (13.28)]{bernardi-maday97}). 
This simultaneous stability raises the hope for $H^{1/2}$-stable localizations and was 
pioneered in \cite{heuer_asm_indef_hyp} for the $hp$-BEM for the hypersingular operator on meshes consisting
of quadrilaterals. Returning to the $hp$-FEM, $H^1$-stable localizations on triangular/tetrahedral meshes
were not developed until \cite{schoeberl_asm_fem}. The techniques developed there were subsequently used in
\cite{melenk_appendix} to design $H^{1/2}$-stable decompositions on triangular meshes and thus paved the way 
for condition number estimates that are uniform in the approximation $p$ for overlapping Schwarz methods for 
the $hp$-version BEM applied to the hypersingular operator. Non-overlapping additive Schwarz preconditioners
for high order discretizations of the hypersingular operator are also available in the literature, 
\cite{ainsworth-guo00}; as it is typical of this class of preconditioners, the condition number 
still grows polylogarithmically in $p$.

Our preconditioner is based on decomposing the approximation space into the space of piecewise linears
and spaces associated with the vertex patches. It is highly desirable to decompose the space of piecewise linears
further in a multilevel fashion.  For sequences of uniformly refined meshes, the first such 
multilevel space decomposition appears to be \cite{tran_stephan_asm_h_96} (see also \cite{oswald_99}). 
For adaptive meshes, local multilevel diagonal scaling was first analyzed in \cite{amcl03}, where 
for a sequence $\mathcal{T}_{\ell}$ of successively refined adaptive meshes a uniformly bounded condition number
for the preconditioned system is established. Formally, however, \cite{amcl03} requires 
that $\mathcal{T}_{\ell} \cap \mathcal{T}_{\ell+1} \subset \mathcal{T}_{\ell+k}$ for all $\ell,k \in \mathbb{N}_0$, i.e., as soon as an element $K \in \mathcal{T}_{\ell}$ is not refined, it remains non-refined in all 
succeeding triangulations. While this can be achieved implementationally, the recent works \cite{ffps,dissTF} 
avoid such a restriction by considering sequences of meshes that are obtained 
in typical $h$-adaptive environments with the aid of {\em newest vertex bisection} (NVB). 
We finally note that the additive Schwarz decomposition on adaptively refined meshes is a subtle issue. 
Hierarchical basis preconditioners (which are based on the new nodes only) lead to a growth of the 
condition number with $\mathcal{O}(\left|\log{h_{min}}\right|^2)$; see \cite{tran_stephan_mund_hierarchical_prec}.
Global multilevel diagonal preconditioning (which is based on all nodes) leads to a growth $\mathcal{O}(\left|\log{h_{min}}\right|)$; 
see \cite{maischak_multilevel_asm,ffps}. 

The paper is organized as follows: In Section~\ref{sec:model_problem} we introduce the hypersingular equation 
and the discretization by high order piecewise polynomial spaces. 
Section~\ref{sec:properties-of-honehalftilde} collects properties of the 
fractional Sobolev spaces including the scaling properties. 
Section~\ref{sec:p_condition} studies
in detail the $p$-dependence of the condition number of the unpreconditioned system. The polynomial
basis on the reference triangle chosen by us is a hierarchical basis of the form first proposed by 
Karniadakis \& Sherwin, \cite[Appendix~{D.1.1.2}]{karniadakis_sherwin}; the precise form is the one from 
\cite[Section~{5.2.3}]{zaglmayr_diss}.
We prove bounds for the condition number of the stiffness matrix not only in the $H^{1/2}$-norm but also 
in the norms of $L^2$ and $H^1$. This is also of interest for $hp$-FEM and could not be found in the literature. 
Section~\ref{sec:main_results} develops several preconditioners. The first one (Theorem~{~\ref{thm:ppreconditioner}}) is based on 
decomposing the high order approximation space into the global space of piecewise linears and local 
high order spaces of functions associated with the vertex patches. The second one (Theorem~{~\ref{thm:p_precond_with_multilevel}}) is based on a further 
multilevel decomposition of the global space of piecewise linears. The third one (Theorem~{~\ref{thm:hp_reference_solver_preconditioner}}) 
exploits the observation that topologically, only a finite number of vertex patches can occur. Hence, significant
memory savings for the preconditioner are possible if the exact bilinear forms for the vertex patches are replaced 
with scaled versions of simpler ones defined on a finite number of reference configurations. 
Numerical experiments in Section~\ref{sec:numerics} illustrate that the proposed preconditioners are indeed robust
with respect to both $h$ and $p$.  

We close with a remark on notation: 
The expression $a \lesssim b$ signifies the existence of a constant $C>0$ such that $a \leq C \, b$. The constant $C$ 
does not depend on the mesh size $h$ and the approximation
order $p$, but may depend on the geometry and the shape regularity of the triangulation. We also write $a \sim b$ to
abbreviate $a \lesssim b \lesssim a$.

\section{$hp$-discretization of the hypersingular integral equation}
\label{sec:model_problem}
\subsection{Hypersingular integral equation}
Let $\Omega\subset\mathbb{R}^3$ be a bounded Lipschitz polyhedron with a connected boundary $\partial \Omega$, 
and let $\Gamma \subseteq \partial \Omega$ be
an open, connected subset of $\partial \Omega$. If $\Gamma \ne \partial\Omega$, we assume it to be 
a Lipschitz hypograph, \cite{book_mclean}; the key property needed is that 
$\Gamma$ is such that the ellipticity condition (\ref{eq:ellipticity}) holds. 
Furthermore, we will use affine, shape regular
triangulations of $\Gamma$, which further imposes conditions on $\Gamma$. 
In this work, we are concerned with preconditioning high order discretizations of the hypersingular integral operator, which is given by
\begin{align}
\label{hypsing_operator_definition}
\left(D u \right)(x)&:=  - \partial_{n_x }^{int} \int_{\Gamma}{ \partial_{n_y}^{int} G(x,y)u(y) \; ds_y} \quad \text{for } x \in \Gamma, 
\end{align}
where $G(x,y):= \frac{1}{4\pi} \frac{1}{\left|x-y\right|}$ is the fundamental solution of the 3D-Laplacian and $\partial_{n_y}^{int}$ denotes the
(interior) normal derivative with respect to $y \in \Gamma$.

We will need some results from the theory of Sobolev and interpolation spaces, see \cite[Appendix B]{book_mclean}.
For an open subset $\omega \subset \partial \Omega$, let $L^2(\omega)$ and $H^1(\omega)$ denote the usual 
Sobolev spaces. The space $\widetilde H^1(\omega)$ consists of those 
functions whose zero extension to $\partial\Omega$ is in $H^1(\partial\Omega)$.  (In particular, for 
$\omega = \partial\Omega$, $H^1(\partial\Omega) = \widetilde H^1(\partial\Omega)$.) 
When the surface measure of the set $\partial\Omega\setminus \omega$ is positive, we use the equivalent norm
$\left\|u\right\|_{\widetilde{H}^1(\omega)}^2:=\left\|\nabla_{\Gamma} u\right\|^2_{L^2(\omega)}$.

We will define fractional Sobolev norms by interpolation. The following Proposition~\ref{interpolation_theorem}
collects key properties of interpolation spaces that we will need; we refer to 
\cite{tartar07,triebel95} for a comprehensive treatment.  
For two Banach spaces $\left(X_0, \left\|\cdot\right\|_0\right)$ and $\left(X_1,\left\|\cdot\right\|_{1}\right)$, 
with continuous inclusion $X_1 \subseteq X_0$ and a parameter $s \in (0,1)$ the interpolation norm is defined as
\begin{align*}
  \left\|u\right\|^2_{[X_0,X_1]_s} &:= \int_{t=0}^\infty t^{-2s} \left( \inf_{v \in X_1} \|u - v\|_{0} + t \|v\|_1\right)^2 \frac{dt}{t}.
\end{align*}
The interpolation space is given by $\left[X_0,X_1\right]_{s}:=\left\{ u \in X_0 : \left\|u\right\|_{[X_0,X_1]_{s}} < \infty \right\}$.

An important result, which we use in this paper, is the following interpolation theorem:
\begin{proposition}
\label{interpolation_theorem}
  Let $X_i$, $Y_i$, $i\in \{0,1\}$, be two pairs of Banach spaces with continuous inclusions $X_1 \subseteq X_0$ and $Y_1 \subseteq Y_0$. Let $s \in (0,1)$. 
\begin{enumerate}[(i)]
\item
\label{item:interpolation_theorem:i}
  If a linear operator $T$ is bounded as an operator $X_0 \to Y_0$ and $X_1 \to Y_1$, then it is also bounded
  as an operator $[X_0,X_1]_{s} \to [Y_0,Y_1]_{s}$ with
  \begin{align*}
    \left\|T\right\|_{[X_0,X_1]_{s} \to [Y_0,Y_1]_{s}} \leq \left\|T\right\|_{X_0 \to Y_0}^{1-s} \left\|T\right\|_{X_1 \to Y_1}^s.  
  \end{align*}
\item
\label{item:interpolation_theorem:ii}
  There exists a constant $C > 0$ such that for all $x \in X_1$: $\displaystyle \left\|x\right\|_{[X_0,X_1]_{s} } \leq C \left\|x\right\|_{X_0}^{1-s} \left\|x\right\|_{X_1}^s.$ 
\end{enumerate}
\end{proposition}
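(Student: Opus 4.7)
The plan is to recognize that the integrand $\inf_{v\in X_1}(\|u-v\|_0 + t\|v\|_1)$ is Peetre's $K$-functional $K(t,u;X_0,X_1)$, and then carry out two standard exercises: a change-of-variables argument for (i) and a simple split-of-the-integral for (ii).

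For part (\ref{item:interpolation_theorem:i}), set $M_0:=\|T\|_{X_0\to Y_0}$ and $M_1:=\|T\|_{X_1\to Y_1}$ (assume both nonzero; the degenerate cases are handled separately). For any $v\in X_1$ the decomposition $Tu = T(u-v) + Tv$ is admissible in the infimum defining $K(t,Tu;Y_0,Y_1)$, and boundedness of $T$ on each endpoint space yields
\begin{align*}
\|T(u-v)\|_{Y_0} + t\|Tv\|_{Y_1} \;\leq\; M_0\|u-v\|_{X_0} + tM_1\|v\|_{X_1}
\;=\; M_0\bigl(\|u-v\|_{X_0} + t(M_1/M_0)\|v\|_{X_1}\bigr).
\end{align*}
Taking the infimum over $v\in X_1$ gives $K(t,Tu;Y_0,Y_1) \leq M_0 \, K(tM_1/M_0,u;X_0,X_1)$. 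Plugging this into the interpolation norm of $Tu$ and performing the substitution $\tau = tM_1/M_0$ (so that $dt/t = d\tau/\tau$ is invariant) produces a factor $(M_0/M_1)^{-2s}$, and collecting constants yields $\|Tu\|_{[Y_0,Y_1]_s} \leq M_0^{1-s}M_1^s \,\|u\|_{[X_0,X_1]_s}$, as desired.

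For part (\ref{item:interpolation_theorem:ii}), assume $x\in X_1$ is nonzero (otherwise the inequality is trivial). Only two competitor choices in the infimum are needed: $v=0$ gives $K(t,x)\leq\|x\|_{X_0}$, and $v=x$ gives $K(t,x)\leq t\|x\|_{X_1}$. Therefore $K(t,x)\leq \min(\|x\|_{X_0},\,t\|x\|_{X_1})$. Splitting the defining integral at the crossover value $t^\star := \|x\|_{X_0}/\|x\|_{X_1}$ and using the smaller bound on each piece yields two elementary integrals of the form $\int_0^{t^\star} t^{1-2s}\,dt$ and $\int_{t^\star}^\infty t^{-2s-1}\,dt$, both finite because $s\in(0,1)$. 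Substituting $t^\star$ and collecting terms gives
\begin{align*}
\|x\|_{[X_0,X_1]_s}^2 \;\leq\; \frac{1}{2s(1-s)}\,\|x\|_{X_0}^{2(1-s)}\,\|x\|_{X_1}^{2s},
\end{align*}
which is the stated inequality with $C = (2s(1-s))^{-1/2}$.

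I do not expect any substantive obstacle here; both arguments are textbook (cf.\ \cite{tartar07,triebel95}). The only delicate point is the bookkeeping in the change of variables in (i), specifically tracking how the scaling factor $M_1/M_0$ migrates out of the integral to yield exactly the geometric-mean exponent $M_0^{1-s}M_1^s$; this is where one must be careful not to misplace the factor $1-s$ versus $s$.
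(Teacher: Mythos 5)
Your proof is correct, and both parts are the standard $K$-functional arguments: the change-of-variables $\tau = tM_1/M_0$ for (i) does produce the factor $(M_1/M_0)^{2s}$ that combines with $M_0^2$ to give $M_0^{2(1-s)}M_1^{2s}$, and the split of the integral at $t^\star=\|x\|_{X_0}/\|x\|_{X_1}$ for (ii) yields exactly $C=(2s(1-s))^{-1/2}$. The paper itself states this proposition without proof, deferring to the cited references \cite{tartar07,triebel95}, and your argument is precisely the one found there, so there is nothing to compare beyond noting that your computation is complete and the constant in (ii) is sharp for this method.
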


We define the fractional Sobolev spaces by interpolation. For $s \in (0,1)$, we set:
\begin{align*}
  H^{s}(\omega)&:=\left[ L^2(\omega), H^1(\omega) \right]_s, \quad
  \widetilde{H}^{s}(\omega):=\left[ L^2(\omega), \widetilde{H}^1(\omega) \right]_s.
\end{align*}
Here, we will only consider the case $s=1/2$.
We define $H^{-1/2}(\Gamma)$ as the dual space of $\widetilde{H}^{1/2}(\Gamma)$, where
duality is understood with respect to the (continuously) extended $L^2(\Gamma)$-scalar product and denoted by  $\left<\cdot,\cdot \right>_{\Gamma}$.
An equivalent norm on $H^{1/2}(\Gamma)$ is given by ${\left\|u\right\|_{H^{1/2}(\Gamma)}^2 \sim \left\|u\right\|_{L^2(\Gamma)}^2 + \left|u\right|_{H^{1/2}(\Gamma)}^2}$,
where  $\left|\cdot\right|_{H^{1/2}(\Gamma)}$ is given by the Sobolev-Slobodeckij seminorm (see \cite{book_sauter_schwab} for the exact definition).

We now state some important properties of the hypersingular operator $D$ from \eqref{hypsing_operator_definition},  see, e.g., \cite{book_sauter_schwab,book_mclean,book_hsiao_wendland,book_steinbach}.
First, the operator $D: \widetilde{H}^{1/2}(\Gamma) \to H^{-1/2}(\Gamma)$ is a bounded linear operator.

For open surfaces $\Gamma \subsetneqq \partial \Omega$ the operator is elliptic
\begin{align}
\label{eq:ellipticity}
  \left<D u, u \right>_{\Gamma} &\geq c_{ell} \left\|u\right\|_{\widetilde{H}^{1/2}(\Gamma)}^2 \quad \quad \forall u \in \widetilde{H}^{1/2}(\Gamma),
\end{align}
with some constant $c_{ell} > 0$ that only depends on $\Gamma$.
In the case of a closed surface, i.e. $\Gamma=\partial \Omega$ we note that $\widetilde{H}^{1/2}(\Gamma)=H^{1/2}(\Gamma)$ and the operator $D$ is still semi-elliptic, i.e.
\begin{align*}
  \left<D u,u \right>_{\Gamma} &\geq  c_{ell} \left|u\right|_{H^{1/2}(\Gamma)}^2 \quad \quad \forall u \in \widetilde{H}^{1/2}(\Gamma).
\end{align*}
Moreover, the kernel of $D$ then consists of the constant functions only: $\operatorname{ker}(D)~=~\operatorname{span}(1)$.

To get unique solvability and strong ellipticity for the case of a closed surface, it is customary to introduce 
a stabilized operator $\widetilde{D}$ given by the bilinear form 
\begin{align}
  \label{eq:def_stabilized_op}
  \left<\widetilde{D} u ,v \right>_{\Gamma}:= \left<Du,v \right>_{\Gamma} + \alpha^2 \left<u,1 \right>_{\Gamma} \left<v,1 \right>_{\Gamma},  \quad \alpha > 0.
\end{align}
In order to avoid having to distinguish the two cases $\Gamma=\partial \Omega$ and $\Gamma \subsetneqq \partial \Omega$, we 
will only work with the stabilized form on $\widetilde{H}^{1/2}(\Gamma)$ and just set $\alpha = 0$ in the
case of $\Gamma \subsetneqq \partial \Omega$.
The basic integral equation involving the hypersingular operator $D$ then reads:
For given $g \in H^{-1/2}(\Gamma)$, find $u \in \widetilde{H}^{1/2}(\Gamma)$ such that
\begin{align}
\label{eq:weakform}
\left<\widetilde{D} u ,v \right>_{\Gamma} &= \left<g,v \right>_{\Gamma} \quad \forall v \in \widetilde{H}^{1/2}(\Gamma).
\end{align}

We note that in the case of the closed surface $\Gamma=\partial \Omega$, the solution of the stabilized system above is equivalent
to the solution of $\left<Du,v \right>_{\Gamma}=\left<g,v \right>_{\Gamma}$ under the side constraint $\left<u,1 \right>_{\Gamma}=\frac{\left<g,1 \right>_{\Gamma}}{\alpha^2 \, \left|\Gamma\right|}$.
Moreover, it is well known that $\left<\widetilde{D} \cdot ,\cdot \right>_{\Gamma}$ is symmetric, elliptic and induces an equivalent norm on $\widetilde{H}^{1/2}(\Gamma)$, i.e.,
\begin{align*}
  \left<\widetilde{D} u ,u \right>_{\Gamma} \sim \left\|u\right\|_{\widetilde{H}^{1/2}(\Gamma)}^2 \quad \quad \forall u \in \widetilde{H}^{1/2}(\Gamma).
\end{align*}

\subsection{Discretization}
\label{sect:discretization}
Let $\mathcal{T} = \{K_1,\dots,K_N\}$ denote a regular (in the sense of Ciarlet) triangulation of the two-dimensional
manifold $\Gamma\subseteq\partial \Omega$ into compact, non-degenerate planar surface triangles.
We say that a triangulation is $\gamma$-shape regular, if there exists a constant $\gamma> 0$ such that
\begin{align}
\label{eq:shape-regularity}
  \max_{K\in\mathcal{T}} \frac{\mathrm{diam}(K)^2}{|K|} \leq \gamma.
\end{align}

Let $\widehat{K}:=\operatorname{conv}\{(0,0),(1,0),(0,1)\}$ be the reference triangle.
With each element $K$ we associate an affine, bijective element map $F_K: \widehat{K} \to K$. 
We will write  $P^{p}(\widehat{K})$ for the space of polynomials of degree $p$ on $\widehat{K}$.
The space of piecewise polynomials on $\mathcal{T}$ is given by
\begin{align}
  P^p(\mathcal{T}) := \left\{ u\in L^2(\Gamma) \,:\, u \circ F_K  \in P^p(\widehat{K}) \text{ for all } K\in\mathcal{T} \right\}.
\end{align}
The elementwise constant mesh width
function $h:=h_\mathcal{T} \in P^0(\mathcal{T})$ is defined by $(h_\mathcal{T})|_K := \mathrm{diam}(K)$ for all $K\in\mathcal{T}$.

Let ${\mathcal{V}} = \{{\boldsymbol{z}}_1,\dots,{\boldsymbol{z}}_M\}$ denote the set of all vertices of the triangulation $\mathcal{T}$ that are not on the boundary of $\Gamma$.
We define the (vertex) patch $\omega_{\boldsymbol{z}}$ for a vertex ${\boldsymbol{z}} \in{\mathcal{V}}$ by
\begin{align}\label{def:patch}
  \omega_{\boldsymbol{z}} := \operatorname*{interior} \left(\bigcup_{\left\{K\in\mathcal{T}: \;{\boldsymbol{z}} \in K \right\}} K \right), 
\end{align}
where the interior is understood with respect to the topology of $\Gamma$. 
For $p \geq 1$, define
\begin{align}
  \widetilde{S}^p(\mathcal{T}) := P^p(\mathcal{T}) \cap \widetilde{H}^{1/2}(\Gamma).
\end{align}
Then, the Galerkin discretization of~\eqref{eq:weakform} consists in replacing $\widetilde{H}^{1/2}(\Gamma)$ with 
the discrete subspace $\widetilde{S}^p(\mathcal{T})$, i.e.: 
Find $u_h \in \widetilde{S}^p(\mathcal{T})$ such that
\begin{align}\label{eq:weakform:discrete}
  \left<\widetilde{D} u_h ,v_h \right>_{\Gamma} = \left<g,v_h \right>_{\Gamma} \quad\text{for all } v_h \in \widetilde{S}^p(\mathcal{T}).
\end{align}

\begin{remark}
  We employ the same polynomial degree for all elements. This is not essential and done for simplicity of 
  presentation. For details on the more general case, see \cite{melenk_appendix}.
\hbox{}\hfill\rule{0.8ex}{0.8ex}
\end{remark}
After choosing a basis of $\widetilde{S}^p(\mathcal{T})$,
the problem (\ref{eq:weakform:discrete}) can be written as a linear system of equations, and we write 
$\widetilde{D}^p_h$ for the resulting system matrix. Our goal is to construct a preconditioner for $\widetilde{D}^p_h$.
It is well-known that the condition number of $\widetilde{D}_h^p$ depends on the choice of the basis of 
$\widetilde{S}^p(\mathcal{T})$, which we fix in Definition~\ref{definition_basis} below. 
We remark in passing that the  preconditioned system of 
Section~\ref{sec:hpprecond} will no longer depend on the basis. 

\subsection{Polynomial basis on the reference element}
\label{sec:basis_reference_element}
For the matrix representation of the Galerkin formulation (\ref{eq:weakform:discrete}) we have to 
specify a polynomial basis on the reference triangle $\widehat{K}$. We use a basis that relies 
on a collapsed tensor product representation of the triangle and is given in \cite[Section 5.2.3]{zaglmayr_diss}. 
This kind of basis was first proposed for the $hp$-FEM 
by Karniadakis \& Sherwin, \cite[Appendix D.1.1.2]{karniadakis_sherwin}; closely related earlier works on polynomial
bases that rely on a collapsed tensor product representation of the triangle are \cite{koornwinder75,dubiner91}. 

\begin{definition}[Jacobi polynomials]
  \label{def:ortho_polys}
  For coefficients $\alpha$, $\beta > -1$ the family of {\em Jacobi polynomials} on the interval $\left(-1,1\right)$ is denoted by $P_n^{(\alpha,\beta)}$, $n \in \mathbb{N}_0$.
  They are orthogonal with respect to the $L^2(-1,1)$ inner product with weight $(1-x)^\alpha (1+x)^\beta$.
  (See for example \cite[Appendix A]{karniadakis_sherwin} or \cite[Appendix A.3]{zaglmayr_diss} for the exact definitions and
  a list of important properties).
  The \emph{Legendre polynomials} are a special case of the Jacobi polynomials for $\alpha=\beta=0$ and denoted by $\ell_n(s):=P^{(0,0)}_{n}(s)$.
  The \emph{integrated Legendre polynomials} $L_n$ and the \emph{scaled polynomials} are defined by 
  \begin{align}
    \label{eq:def_legendre_poly}
    L_n(s)&:=\int_{-1}^{s}{\ell_{n-1}(t) dt} \quad \text{for } n \in \mathbb{N},
    &P_{n}^{\mathcal{S},(\alpha,\beta)}(s,t)&:=t^{n}P^{(\alpha,\beta)}_n(s/t), 
    &L_n^{\mathcal{S}}(s,t)&:=t^n L_n(s/t).
  \end{align}
\end{definition}

On the reference triangle, our basis reads as follows:
\begin{definition}[polynomial basis on the reference triangle]
\label{definition_basis}
  Let $p \in {\mathbb N}$ and let $\lambda_1,\lambda_2,\lambda_3$ be the barycentric coordinates on the reference triangle $\widehat{K}$. 
  Then the basis functions for the reference triangle consist of three vertex functions, 
$p-1$ edge functions per edge, and $(p-1)(p-2)/2$ cell-based functions: 
    \begin{enumerate}[(a)]
    \item for $i=1$, $2$, $3$ the vertex functions are: 
      \begin{align*}    
      \varphi^{\mathcal{V}}_i&:=\lambda_i;
    \intertext{\item  for $m=1$, $2$, $3$ and an edge $\mathcal{E}_m$ with edge vertices $e_1,e_2$,
        the edge functions are given by:}
      \varphi_{i}^{\mathcal{E}_m}&:=\sqrt{\frac{2i+3}{2}}\,L_{i+2}^{\mathcal{S}}(\lambda_{e_2}-\lambda_{e_1},\lambda_{e_1}+\lambda_{e_2}), \quad \quad 0\leq i\leq p-2;
    \intertext{\item for $0 \leq i+j\leq p-3$ the cell based functions are:}
      \varphi_{(i,j)}^{\mathcal{I}}&:=c_{ij} \lambda_1 \lambda_2 \lambda_3 P_{i}^{\mathcal{S},(2,2)}(\lambda_1-\lambda_2,\lambda_1+\lambda_2) \, P_j^{(2i+5,2)}(2\lambda_3-1).
    \end{align*}
    with $c_{ij}$ such that $\left\|\varphi_{(i.j)}^{\mathcal{I}}\right\|_{L^2(\widehat{K})}=1$.
  \end{enumerate}
\end{definition}

\begin{remark}
  In order to get a basis of $\widetilde{S}^p(\mathcal{T})$ we take the composition with the element mappings $\varphi \circ F_K$.
  To ensure continuity along edges we take an arbitrary orientation of the edges and observe that
  the edge basis functions $\varphi_i^\mathcal{E}$ are symmetric under permutation of $\lambda_{e_1}$ and $\lambda_{e_2}$ up to a sign change $(-1)^{i}$.
\hbox{}\hfill\rule{0.8ex}{0.8ex}
\end{remark}

 \section{Properties of $\widetilde{H}^{1/2}(\Gamma)$} 
\label{sec:properties-of-honehalftilde}
\subsection{Quasi-interpolation in $\widetilde{H}^{1/2}(\Gamma)$}
Several results of the present paper depend on results in \cite{melenk_appendix}. 
Therefore we present a short summary of the main results of that paper in this section.
In \cite{melenk_appendix} the authors propose an $H^{1/2}$-stable space decomposition on meshes consisting of triangles.
It is based on quasi-interpolation operators constructed by local averaging on elements. 

We introduce the following product spaces:
\begin{align*}
  X_0&:=\prod_{\boldsymbol{z} \in {\mathcal{V}}} { L^2(\omega_{\boldsymbol{z}}) }, \quad \quad  X_1:=\prod_{\boldsymbol{z}\in {\mathcal{V}}}{ \widetilde{H}^1(\omega_{\boldsymbol{z}})}.
\end{align*}
The spaces $L^2(\omega_{\boldsymbol{z}})$ and $\widetilde{H}^1(\omega_{\boldsymbol{z}})$ are endowed with the 
$L^2$- and $\widetilde{H}^1$-norm, respectively. 

\begin{proposition}[localization, \protect{\cite{melenk_appendix}}]
  \label{prop:stable_space_decomposition}
  There exists an operator $J: L^2(\Gamma) \to \left(\widetilde{\mathcal{S}}^1(\mathcal{T}),\left\|\cdot\right\|_{L^2(\Gamma)}\right) \times X_0$ with the following properties:
  \begin{enumerate}[(i)]
    \item 
  \label{item:prop:stable_space_decomposition-i}
  $J$ is linear and bounded.
    \item 
  \label{item:prop:stable_space_decomposition-ii}
  $J|_{\widetilde{H}^1(\Gamma)}$ is also bounded as an operator
      $\widetilde{H}^1(\Gamma) \to \left(\widetilde{\mathcal{S}}^1(\mathcal{T}),\left\|\cdot\right\|_{\widetilde{H}^1(\Gamma)}\right) \times X_1$.    
    \item 
  \label{item:prop:stable_space_decomposition-iii}
  If $u \in \widetilde{S}^p(\mathcal{T})$ then each component of $J u$ is in $\widetilde{S}^p(\mathcal{T})$.
    \item 
  \label{item:prop:stable_space_decomposition-iv}
If we write $J u=:(u_1,U)$, and furthermore $U_{\boldsymbol{z}}$ for the component of $U$ in $X_0$ corresponding
      to the space $L^2(\omega_{\boldsymbol{z}})$, then $Ju$ represents an $\widetilde{H}^{1/2}(\Gamma)-stable$ decomposition of $u$, i.e.,      
      \begin{align}
        \label{eq:stable_space_decomposition_sum}
          u&=u_1 + \sum_{\boldsymbol{z} \in {\mathcal{V}}}{U_{\boldsymbol{z}}} \quad \text{and} \quad 
          \left\|u_1\right\|_{\widetilde{H}^{1/2}(\Gamma)}^2 + \sum_{\boldsymbol{z} \in {\mathcal{V}}}{\left\|U_{\boldsymbol{z}}\right\|_{\widetilde{H}^{1/2}(\omega_{\boldsymbol{z}})}^2}\leq
          C \left\|u\right\|_{\widetilde{H}^{1/2}(\Gamma)}^2.
      \end{align}   
  \end{enumerate}
\vspace*{-\baselineskip}
  The norms of $J$ in 
(\ref{item:prop:stable_space_decomposition-i})---(\ref{item:prop:stable_space_decomposition-ii}) and the 
constant $C > 0$ in (\ref{item:prop:stable_space_decomposition-iv})
depend only on $\Gamma$ and the shape regularity constant $\gamma$.
  \begin{proof}[Sketch of proof: \nopunct]
    The first component of $J$ (i.e., the mapping $u \mapsto u_1$) consists of the Scott-Zhang 
projection operator, as modified in \cite[Section 3.2]{aff_hypsing}.
    The local components (i.e., the functions $U_{\boldsymbol{z}}$, $z \in {\mathcal{V}}$) then are based 
    on a successive decomposition into vertex, edge and interior parts, similar to what is done in 
    \cite{schoeberl_asm_fem}.  We give a flavor of the procedure. Set $u_2:= u - u_1$. 
    In order to define the vertex parts for a vertex $\boldsymbol{z}$, 
    we select an element $K \subset \omega_{\boldsymbol{z}}$ of the patch $\omega_{\boldsymbol{z}}$ and perform a 
    suitable local averaging of $u_2$ on that element; this averaged 
    function  $u_{loc,K}$ is defined on $K$ in terms of $u_2|_K$ and vanishes on the edge opposite $\boldsymbol{z}$.  
    In order to extend $u_{loc,K}$ 
    to the patch $\omega_{\boldsymbol{z}}$ and thus obtain the function $u_{\boldsymbol{z}}$, we define $u_{\boldsymbol{z}}$ 
    by ``rotating'' $u_{loc,K}$ around the vertex $\boldsymbol{z}$.  The averaging process can be done 
    in such a way that for continuous functions $u_2$ one has 
    $u_2(\boldsymbol{z}) = u_{\boldsymbol{z}}(\boldsymbol{z})$ and that one has appropriate 
    stability properties in $L^2$ and $H^1$. The edge contributions are constructed from the function 
    $u_3:= u_2 - \sum_{\boldsymbol{z} \in {\mathcal{V}}} u_{\boldsymbol{z}}$. 
    Let ${\mathcal E}(\mathcal{T})$ denote the set of interior edges of $\mathcal{T}$. 
    For an edge ${\mathcal E} \in {\mathcal E}(\mathcal{T})$ one selects 
    an element (of which ${\mathcal E}$ is an edge), averages there, and extends the obtained averaged function to 
    the edge patch by symmetry across the edge ${\mathcal E}$. In this way, the function 
    $u_{{\mathcal E}}$ is constructed for each edge ${\mathcal E}$.
    It again holds for sufficiently smooth $u_3$ that 
    $u_3(x)=u_{{\mathcal E}}(x) \; \forall x \in {\mathcal E}$.
        For $u \in \widetilde{H}^1(\Gamma)$ and in turn $u_2 \in \widetilde{H}^1(\Gamma)$, we have that 
    $u_4:= u- \sum_{ {\mathcal E} \in \mathcal{E}(\mathcal{T})}{u_{\mathcal E}} -  \sum_{\boldsymbol{z} \in {\mathcal{V}}}{u_{\boldsymbol{z}}}$  
    vanishes on all edges; hence $u_4|_K \in \widetilde{H}^1(K)$ for all $K \in \mathcal{T}$. 
    The terms $u_{\boldsymbol{z}}$, $u_{\mathcal E}$, $u_4|_K$ can be rearranged to take the form of patch contributions 
    $U_{\boldsymbol{z}}$ as given in the statement of the proposition. (The decomposition is not unique.) 
    The $\widetilde{H}^{1/2}$ stability
    is a direct consequence of the $L^2$ and $H^1$ stability and interpolation properties given in 
Proposition~\ref{interpolation_theorem}, (\ref{item:interpolation_theorem:i}).
  We finally mention that assertion (\ref{item:prop:stable_space_decomposition-iii}) follows from the fact 
  that the averaging operators employed at the various stages of the decomposition are polynomial preserving. 
  \end{proof}
\end{proposition}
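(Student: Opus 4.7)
The plan is to define $Ju = (u_1, U)$ by first extracting a coarse piece $u_1 \in \widetilde{\mathcal{S}}^1(\mathcal{T})$ via a Scott--Zhang-type quasi-interpolant of $u$, and then splitting the high-order remainder $u_2 := u - u_1$ into patch-localized contributions. I would take $u_1$ to be the Scott--Zhang projection (modified near $\partial \Gamma$ so that it respects the homogeneous trace condition built into $\widetilde{H}^{1/2}$). Standard properties of Scott--Zhang give local, simultaneous $L^2$- and $\widetilde{H}^1$-stability with constants depending only on $\gamma$, while polynomial preservation of this projection guarantees $u_1 \in \widetilde{\mathcal{S}}^1(\mathcal{T}) \subset \widetilde{\mathcal{S}}^p(\mathcal{T})$ whenever $u \in \widetilde{\mathcal{S}}^p(\mathcal{T})$, which addresses the easy half of (iii).

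For the remainder, following the Sch\"oberl--Melenk--Zaglmayr template \cite{schoeberl_asm_fem}, I would peel off vertex, edge, and cell contributions in that order. For each interior vertex $\boldsymbol{z}$, pick one element $K \subset \omega_{\boldsymbol{z}}$ and define a local function $u_{loc,K}$ by a polynomial-preserving, local averaging of $u_2|_K$ that matches $u_2(\boldsymbol{z})$ at $\boldsymbol{z}$ and vanishes on the edge of $K$ opposite to $\boldsymbol{z}$; extend $u_{loc,K}$ to the whole patch $\omega_{\boldsymbol{z}}$ by a ``rotation'' around $\boldsymbol{z}$, producing $u_{\boldsymbol{z}}$. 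Subtracting all $u_{\boldsymbol{z}}$ from $u_2$ yields a function $u_3$ that vanishes at all interior vertices. Next, for each interior edge $\mathcal{E}$, construct $u_{\mathcal{E}}$ by averaging on one triangle adjacent to $\mathcal{E}$ and extending by reflection across $\mathcal{E}$, so that $u_{\mathcal{E}} = u_3$ on $\mathcal{E}$ (for smooth inputs). The final remainder $u_4 := u_3 - \sum_{\mathcal{E}} u_{\mathcal{E}}$ vanishes on the whole skeleton, hence splits elementwise with $u_4|_K \in \widetilde{H}^1(K)$. Regrouping each edge and each cell contribution with a distinguished endpoint/vertex patch produces the patch components $U_{\boldsymbol{z}}$ asserted in (iv), and polynomial preservation of all building blocks yields the rest of (iii).

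For the norm estimates, I would check that every building block admits local $L^2$- and $\widetilde{H}^1$-bounds depending only on $\gamma$; summing over patches (whose overlap is bounded by shape regularity) gives the global bounds in (i) and (ii). The $\widetilde{H}^{1/2}$-stable decomposition in (iv) is then obtained by interpreting $J$ as an operator into the product space $(\widetilde{\mathcal{S}}^1(\mathcal{T}), \|\cdot\|) \times X_s$ on the interpolation scale $s \in \{0,1\}$ and invoking Proposition~\ref{interpolation_theorem}(\ref{item:interpolation_theorem:i}); since the norm on the patch factor of $X_{1/2}$ is equivalent to the $\widetilde{H}^{1/2}(\omega_{\boldsymbol{z}})$-norm, this delivers exactly the bound in \eqref{eq:stable_space_decomposition_sum}.

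The hard part will be designing the vertex and edge liftings so that three requirements hold simultaneously: polynomial preservation on $P^p(\widehat{K})$ (needed for (iii)), geometric compatibility of the rotations and reflections across interelement edges so that the resulting pieces are continuous and lie in $\widetilde{H}^1$ of the patch, and $p$-independent $L^2$/$H^1$ stability. On tensor-product elements this is standard via Gauss--Lobatto interpolation; on triangles no such tool is available, and the operators must be constructed by hand as in \cite{schoeberl_asm_fem,melenk_appendix}. Once these local operators are in place, the rest of the argument is essentially bookkeeping plus one application of interpolation.
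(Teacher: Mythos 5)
Your proposal follows essentially the same route as the paper's own sketch: Scott--Zhang (modified near $\partial\Gamma$) for the coarse piece, successive vertex/edge/interior peeling with rotation and reflection extensions in the style of \cite{schoeberl_asm_fem}, regrouping into patch contributions, and a single application of Proposition~\ref{interpolation_theorem} for the $\widetilde{H}^{1/2}$ bound. You also correctly identify that the genuinely hard content --- the simultaneous polynomial preservation, interelement compatibility, and $p$-uniform $L^2$/$H^1$ stability of the local averaging operators on triangles --- is exactly what is deferred to \cite{melenk_appendix}, which is where the paper itself sends the reader for the full proof.
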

\begin{remark}
Independently, a decomposition similar to Proposition~\ref{prop:stable_space_decomposition} 
was presented in \cite{falk-winther13}.
\hbox{}\hfill\rule{0.8ex}{0.8ex}
\end{remark}

The construction in Proposition~\ref{prop:stable_space_decomposition} can be modified and used 
to relate the $\widetilde{H}^{1/2}(\omega_{\boldsymbol{z}})$-norm to the 
$\widetilde{H}^{1/2}({\mathcal O})$-norm if ${\mathcal O} \supset \omega_{\boldsymbol{z}}$: 
\begin{corollary}
\label{cor:restriction_extension}
  Let $\boldsymbol{z} \in {\mathcal{V}}$ and $\mathcal{O}$ be  the union of some triangles of $\mathcal{T}$ with $\omega_{\boldsymbol{z}} \subseteq \mathcal{O} \subseteq \Gamma$.
  Then there exist constants $c_1,c_2$ that depend only on $\mathcal{O}$, $\Gamma$, 
and the $\gamma$-shape regularity of $\mathcal{T}$ such that
  for all $u \in \widetilde{H}^{1/2}(\mathcal{O})$ with $\operatorname{supp}(u)\subseteq \overline{\omega_{\boldsymbol{z}}}$, 
we can estimate:
  \begin{align*}
    c_1\;\left\|u\right\|_{\widetilde{H}^{1/2}(\omega_{\boldsymbol{z}})}&\leq \left\|u\right\|_{\widetilde{H}^{1/2}(\mathcal{O})}\leq c_2 \left\|u\right\|_{\widetilde{H}^{1/2}(\omega_{\boldsymbol{z}})}.
  \end{align*}
  \begin{proof}
    To see the second inequality, consider the extension operator $E$ that extends the function $u$ 
    by $0$ outside of $\omega_{\boldsymbol{z}}$.
    This operator is continuous $L^2(\omega_{\boldsymbol{z}}) \rightarrow L^2(\mathcal{O})$ and 
    $\widetilde{H}^1(\omega_{\boldsymbol{z}}) \rightarrow \widetilde{H}^1(\mathcal{O})$, both with constant $1$. Applying 
    Proposition~\ref{interpolation_theorem}, (\ref{item:interpolation_theorem:i}) to this extension operator $E$ 
    gives the second inequality with $c_2=1$.
    The first inequality is more involved. We start by noting that the stability assertion \eqref{eq:stable_space_decomposition_sum} of 
    Proposition~\ref{prop:stable_space_decomposition} gives 
    $u = u_1 + \sum_{\boldsymbol{z^\prime} \in {\mathcal{V}}} U_{\boldsymbol{z^\prime}}$ and 
    \begin{align}
\label{eq:foo-1001}
      \left\|u_1\right\|^2_{\widetilde{H}^{1/2}(\mathcal{O})}
      + \sum_{\boldsymbol{z^\prime} \in {\mathcal{V}}}{\left\|U_{\boldsymbol{z^\prime}}\right\|^2_{\widetilde{H}^{1/2}(\omega_{\boldsymbol{z^\prime}})}}&\leq C \left\|u\right\|^2_{\widetilde{H}^{1/2}(\mathcal{O})}.
    \end{align}
   The constant $C$ depends only on the set ${\mathcal O}$ and the shape regularity of the triangulation,
   when Proposition \ref{prop:stable_space_decomposition} is applied with $\Gamma$ replaced by $\mathcal{O}$.
   The decomposition in Proposition~\ref{prop:stable_space_decomposition} is not unique, and we will now exploit 
   this by requiring more. Specifically, we assert that the operator $J$, which effects the decomposition, can be 
   chosen such that, for given $\omega_{\boldsymbol{z}}$, we have 
   $\operatorname*{supp} u_1 \subset \overline{\omega_{\boldsymbol{z}}}$ and $U_{\boldsymbol{z}^\prime} = 0$ for 
   $\boldsymbol{z}^\prime \ne \boldsymbol{z}$.   
     If this can be achieved, we get $u = u_1 + U_{\boldsymbol{z}}$.
     Since \eqref{eq:foo-1001} contains a term $\left\|u_1\right\|_{\widetilde{H}^{1/2}(\mathcal{O})}$,
     we also need to reinvestigate the stability proof. The decomposition is $L^2$- and $H^1$-stable, and maps to functions with
     $\operatorname*{supp} u_1 \subset \overline{\omega_{\boldsymbol{z}}}$. Therefore, we can interpret the first component of $J$ as an operator mapping to
     $\widetilde{H}^1(\omega_{\boldsymbol{z}})$ and apply Proposition \ref{interpolation_theorem} (\ref{item:interpolation_theorem:i}) to get:
 
 $$
\left\|u_1\right\|_{\widetilde{H}^{1/2}(\omega_{\boldsymbol{z}})} + \left\|U_{\boldsymbol{z}}\right\|_{\widetilde{H}^{1/2}(\omega_{\boldsymbol{z}})} 
\leq C \left\|u\right\|_{\widetilde{H}^{1/2}(\mathcal{O})}. 
$$

The triangle inequality 
$\left\|u\right\|_{\widetilde{H}^{1/2}(\omega_{\boldsymbol{z}})} \leq 
\left\|u_1\right\|_{\widetilde{H}^{1/2}(\omega_{\boldsymbol{z}})} + 
\left\|U_{\boldsymbol{z}}\right\|_{\widetilde{H}^{1/2}(\omega_{\boldsymbol{z}})}$ then concludes the proof.  

It therefore remains to see that we can construct the operator $J$ of Proposition~\ref{prop:stable_space_decomposition}
with the additional property that $u = u_1 + U_{\boldsymbol{z}}$ if $u$ is such that 
$\operatorname*{supp} u \subset \overline{\omega_{\boldsymbol{z}}}$. This follows by carefully selecting the 
elements on which the local averaging is done, namely, whenever one has to choose an element on which to average, 
one selects, if possible, an element that is {\em not} contained in $\omega_{\boldsymbol{z}}$. 
For example for vertex contributions $\boldsymbol{z'} \in \partial \omega_{\boldsymbol{z}}$ we make sure to use 
elements $K'$ which are not in $\omega_{\boldsymbol{z}}$. This implies that for 
$\operatorname{supp}(u) \subseteq \overline{\omega_{\boldsymbol{z}}}$ we get $u_{\boldsymbol{z'}}=0$. 
A similar choice is made when defining the edge contributions. 
  \end{proof}
\end{corollary}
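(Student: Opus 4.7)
The plan is to prove the two inequalities separately, with the upper bound being a short interpolation argument and the lower bound being the substantive part.

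For the upper bound $\|u\|_{\widetilde{H}^{1/2}(\mathcal{O})} \leq c_2 \|u\|_{\widetilde{H}^{1/2}(\omega_{\boldsymbol{z}})}$, I would introduce the zero-extension operator $E: u \mapsto \widetilde{u}$ from functions on $\omega_{\boldsymbol{z}}$ to functions on $\mathcal{O}$. Since the $L^2$-norm is invariant under zero extension, $E: L^2(\omega_{\boldsymbol{z}}) \to L^2(\mathcal{O})$ has norm $1$. Moreover, using the equivalent $\widetilde{H}^1$-norm $\|\nabla_\Gamma \cdot\|_{L^2}^2$, zero extension is also continuous $\widetilde{H}^1(\omega_{\boldsymbol{z}}) \to \widetilde{H}^1(\mathcal{O})$ with norm $1$ (this uses the fact that $\widetilde{H}^1$-functions have vanishing trace on the part of $\partial \omega_{\boldsymbol{z}}$ that is interior to $\mathcal{O}$, which follows from the hypothesis $\operatorname{supp}(u) \subseteq \overline{\omega_{\boldsymbol{z}}}$). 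Interpolating by Proposition~\ref{interpolation_theorem}(\ref{item:interpolation_theorem:i}) with $s=1/2$ yields $c_2 = 1$.

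For the lower bound, the idea is to apply Proposition~\ref{prop:stable_space_decomposition} with $\Gamma$ replaced by $\mathcal{O}$, obtaining a decomposition $u = u_1 + \sum_{\boldsymbol{z}' \in \mathcal{V}(\mathcal{O})} U_{\boldsymbol{z}'}$ that is stable in $\widetilde{H}^{1/2}(\mathcal{O})$. The crucial observation is that the construction is non-unique: each vertex/edge/interior contribution in the proof sketch of Proposition~\ref{prop:stable_space_decomposition} is obtained by choosing an element on which to perform local averaging. I would argue that one can systematically choose these averaging elements so that whenever $\boldsymbol{z}' \neq \boldsymbol{z}$, the averaging for $U_{\boldsymbol{z}'}$ is performed on an element lying outside $\omega_{\boldsymbol{z}}$, and similarly for edges not contained in $\overline{\omega_{\boldsymbol{z}}}$. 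Since $\operatorname{supp}(u) \subseteq \overline{\omega_{\boldsymbol{z}}}$, all these averaged quantities vanish, so $U_{\boldsymbol{z}'} = 0$ for $\boldsymbol{z}' \neq \boldsymbol{z}$. Likewise, the Scott--Zhang-type operator producing $u_1$ can be arranged (by selecting averaging elements inside $\omega_{\boldsymbol{z}}$ wherever the vertex functional is attached to a vertex in $\overline{\omega_{\boldsymbol{z}}}$) so that $\operatorname{supp}(u_1) \subseteq \overline{\omega_{\boldsymbol{z}}}$, leaving $u = u_1 + U_{\boldsymbol{z}}$.

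With both summands supported in $\overline{\omega_{\boldsymbol{z}}}$, I would reinterpret $u_1$ and $U_{\boldsymbol{z}}$ as elements of the spaces $\widetilde{H}^1(\omega_{\boldsymbol{z}}) \subseteq L^2(\omega_{\boldsymbol{z}})$. The $L^2$- and $\widetilde{H}^1$-stability estimates from Proposition~\ref{prop:stable_space_decomposition}, combined with Proposition~\ref{interpolation_theorem}(\ref{item:interpolation_theorem:i}) applied to the operator $u \mapsto (u_1, U_{\boldsymbol{z}})$ regarded as a map into $\widetilde{H}^{1/2}(\omega_{\boldsymbol{z}}) \times \widetilde{H}^{1/2}(\omega_{\boldsymbol{z}})$, then yield
\[
  \|u_1\|_{\widetilde{H}^{1/2}(\omega_{\boldsymbol{z}})} + \|U_{\boldsymbol{z}}\|_{\widetilde{H}^{1/2}(\omega_{\boldsymbol{z}})} \leq C \|u\|_{\widetilde{H}^{1/2}(\mathcal{O})},
\]
and a triangle inequality finishes the argument.

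The main obstacle is verifying that the construction underlying Proposition~\ref{prop:stable_space_decomposition} is flexible enough to permit the required element selection, i.e., that for every vertex $\boldsymbol{z}' \in \overline{\omega_{\boldsymbol{z}}} \setminus \{\boldsymbol{z}\}$ and every edge on $\partial \omega_{\boldsymbol{z}}$ there is an adjacent element lying in $\mathcal{O} \setminus \omega_{\boldsymbol{z}}$ on which to average. This is ensured by the hypothesis that $\mathcal{O}$ is a union of triangles with $\omega_{\boldsymbol{z}} \subseteq \mathcal{O}$ together with shape regularity, and is exactly the technical point highlighted at the end of the proof.
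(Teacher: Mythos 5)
Your proof follows the paper's argument essentially verbatim: zero extension plus interpolation for the upper bound, and for the lower bound the same exploitation of the non-uniqueness of the decomposition of Proposition~\ref{prop:stable_space_decomposition} (applied with $\Gamma$ replaced by $\mathcal{O}$) so that $u = u_1 + U_{\boldsymbol{z}}$, followed by reinterpreting the components as maps into spaces on $\omega_{\boldsymbol{z}}$, interpolating, and using the triangle inequality. One small correction: to force $\operatorname{supp}(u_1)\subseteq\overline{\omega_{\boldsymbol{z}}}$, the Scott--Zhang nodal averages at the vertices on $\partial\omega_{\boldsymbol{z}}$ must be taken on elements lying \emph{outside} $\omega_{\boldsymbol{z}}$ (where $u$ vanishes), not inside as your parenthetical states --- your own concluding paragraph gives the correct rule.
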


The stable space decomposition of Proposition~\ref{prop:stable_space_decomposition} is one of several ingredients 
of the proof that the interpolation space obtained by interpolating the space $\widetilde{S}^{p}(\mathcal{T})$ endowed 
with the $L^2$-norm and the $H^1$-norm yields the space $\widetilde{S}^p(\mathcal{T})$ endowed with 
the appropriate fractional Sobolev norm: 
\begin{proposition}[\protect{\cite{melenk_appendix}}]
  \label{prop:characterization_poly_interp}
  Let $s \in (0,1)$ and let $\mathcal{T}$ be a shape regular triangulation of $\Gamma$. 
Let $p \ge 1$.  Then:  
  \begin{align*}
    \left[ \left( \widetilde{S}^{p}(\mathcal{T}), \left\|\cdot\right\|_{L^2(\Gamma)}\right), \left( \widetilde{S}^{p}(\mathcal{T}), \left\|\cdot\right\|_{\widetilde{H}^1(\Gamma)}\right) \right]_s 
    &=\left( \widetilde{S}^{p}(\mathcal{T}), \left\|\cdot\right\|_{\widetilde{H}^s(\Gamma)}\right).
  \end{align*}
  The constants implied in the norm equivalence depend only on $\Gamma$, $s$, and the $\gamma$-shape regularity of $\mathcal{T}$.
\end{proposition}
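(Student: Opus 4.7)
My plan is to reduce the claim to the existence of a simultaneously $L^2$- and $\widetilde{H}^1$-stable projection onto $\widetilde{S}^p(\mathcal{T})$. The easy direction, $\|u\|_{\widetilde{H}^s(\Gamma)} \lesssim \|u\|_{[(\widetilde{S}^p,L^2),(\widetilde{S}^p,\widetilde{H}^1)]_s}$, I would obtain immediately from Proposition~\ref{interpolation_theorem}(\ref{item:interpolation_theorem:i}) applied to the identity: it is continuous with norm $1$ from $(\widetilde{S}^p(\mathcal{T}),\|\cdot\|_{L^2(\Gamma)})$ into $L^2(\Gamma)$ and from $(\widetilde{S}^p(\mathcal{T}),\|\cdot\|_{\widetilde{H}^1(\Gamma)})$ into $\widetilde{H}^1(\Gamma)$, so it sends the polynomial-space interpolation space continuously into $[L^2(\Gamma),\widetilde{H}^1(\Gamma)]_s = \widetilde{H}^s(\Gamma)$.

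For the remaining inequality $\|u\|_{[(\widetilde{S}^p,L^2),(\widetilde{S}^p,\widetilde{H}^1)]_s} \lesssim \|u\|_{\widetilde{H}^s(\Gamma)}$, I would produce a linear projection $\Pi\colon L^2(\Gamma)\to\widetilde{S}^p(\mathcal{T})$ bounded simultaneously as $L^2(\Gamma)\to L^2(\Gamma)$ and as $\widetilde{H}^1(\Gamma)\to\widetilde{H}^1(\Gamma)$, with constants depending only on $\gamma$ and $\Gamma$. Once such $\Pi$ is in hand, Proposition~\ref{interpolation_theorem}(\ref{item:interpolation_theorem:i}) yields continuity of $\Pi\colon\widetilde{H}^s(\Gamma)\to [(\widetilde{S}^p,L^2),(\widetilde{S}^p,\widetilde{H}^1)]_s$, and evaluating at $u\in\widetilde{S}^p(\mathcal{T})$, where $\Pi u=u$, delivers the required bound.

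To build $\Pi$, I would feed everything through the decomposition $J$ of Proposition~\ref{prop:stable_space_decomposition}. For $v\in L^2(\Gamma)$, write $Jv=(v_1,V)$ with $v = v_1 + \sum_{\boldsymbol{z}} V_{\boldsymbol{z}}$; keep $v_1\in\widetilde{S}^1(\mathcal{T})\subset\widetilde{S}^p(\mathcal{T})$ unchanged, and send each patch component $V_{\boldsymbol{z}}\in L^2(\omega_{\boldsymbol{z}})$ through a local projection $\Pi_{\boldsymbol{z}}\colon L^2(\omega_{\boldsymbol{z}})\to \widetilde{S}^p(\mathcal{T})\cap\widetilde{H}^1(\omega_{\boldsymbol{z}})$, setting
$$
\Pi v := v_1 + \sum_{\boldsymbol{z}} \Pi_{\boldsymbol{z}} V_{\boldsymbol{z}}.
$$
The $L^2$- and $\widetilde{H}^1$-stability of $\Pi$ then follow by combining items~(\ref{item:prop:stable_space_decomposition-i})--(\ref{item:prop:stable_space_decomposition-ii}) of Proposition~\ref{prop:stable_space_decomposition}, the analogous stability of each $\Pi_{\boldsymbol{z}}$, and the uniformly bounded overlap of the vertex patches. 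The projection identity $\Pi u = u$ on $\widetilde{S}^p(\mathcal{T})$ is inherited from item~(\ref{item:prop:stable_space_decomposition-iii}), which ensures that each component $U_{\boldsymbol{z}}$ of $Ju$ already lies in $\widetilde{S}^p(\mathcal{T})\cap\widetilde{H}^1(\omega_{\boldsymbol{z}})$ and is therefore fixed by $\Pi_{\boldsymbol{z}}$.

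The main obstacle is the construction of the $\Pi_{\boldsymbol{z}}$ with $p$- and $h$-uniform $L^2$- and $\widetilde{H}^1$-stability. After an affine pull-back---and since $\gamma$-shape regularity bounds the valence of each interior vertex, only finitely many reference patch configurations occur up to scaling---this reduces, on each such reference patch, to a projection from $L^2$ onto the piecewise polynomials of degree $p$ that vanish on the patch boundary, continuous simultaneously in $L^2$ and in $H^1$ with $p$-independent constants. Such projections can be assembled from the $p$-robust polynomial lifting and decomposition tools developed in \cite{schoeberl_asm_fem,melenk_appendix} on which the operator $J$ itself rests; this is the technically most demanding step of the proof.
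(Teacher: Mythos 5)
First, a point of reference: the paper does not prove Proposition~\ref{prop:characterization_poly_interp} at all --- it is imported verbatim from \cite{melenk_appendix}, and the surrounding text only hints that the localization of Proposition~\ref{prop:stable_space_decomposition} is ``one of several ingredients'' of the proof given there. So your proposal must be judged on its own merits. Its skeleton is sound: the embedding $\left[(\widetilde{S}^p,\left\|\cdot\right\|_{L^2}),(\widetilde{S}^p,\left\|\cdot\right\|_{\widetilde{H}^1})\right]_s \hookrightarrow \widetilde{H}^s(\Gamma)$ via the identity is correct; the classical ``interpolation of subspaces'' reduction --- a single linear projection $\Pi$ onto $\widetilde{S}^p(\mathcal{T})$ that is simultaneously $L^2$- and $\widetilde{H}^1$-bounded with $h$- and $p$-independent constants yields the reverse inequality --- is also correct; and the assembly of $\Pi$ from $J$ together with local projections $\Pi_{\boldsymbol{z}}$ works as you describe (the identity $\Pi u = u$ on $\widetilde{S}^p(\mathcal{T})$ does follow from Proposition~\ref{prop:stable_space_decomposition}~(\ref{item:prop:stable_space_decomposition-iii}), and the stability of the sum from the finite overlap of the patches).

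The genuine gap is the step you flag but do not close: the existence, on each reference patch, of a projection from $L^2$ onto the continuous piecewise polynomials of degree $p$ vanishing on the patch boundary that is simultaneously $L^2$- and $H^1$-stable with constants independent of $p$. This is not a routine fact. The $L^2$-orthogonal projection onto this constrained space is not known to be $H^1$-stable uniformly in $p$, and the $\widetilde{H}^1$-orthogonal projection is not obviously $L^2$-stable uniformly in $p$ (a duality argument degrades on non-convex patches), so neither canonical candidate works off the shelf; a genuine construction --- e.g.\ a further vertex/edge/interior splitting with $p$-robust polynomial liftings and trace extensions --- is required. That single step carries essentially all of the $p$-uniformity the proposition asserts, and deferring it to ``tools developed in \cite{melenk_appendix}'' is close to circular, since the proposition being proved is precisely the main result of that reference. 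In short: the reduction is right and consistent with the strategy the paper alludes to, but the mathematical core of the hard direction remains unestablished in your write-up.
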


We note that such a result is clearly valid for fixed $p \geq 1$ (see, e.g., \cite[Proof of Prop. 5]{aff_hypsing}), but the essential observation of 
\cite{melenk_appendix} is that the norm equivalence constants do not depend on $p$.

\subsection{Geometry of vertex patches}

\begin{figure}
\begin{center}
\includegraphics[width=0.4\textwidth]{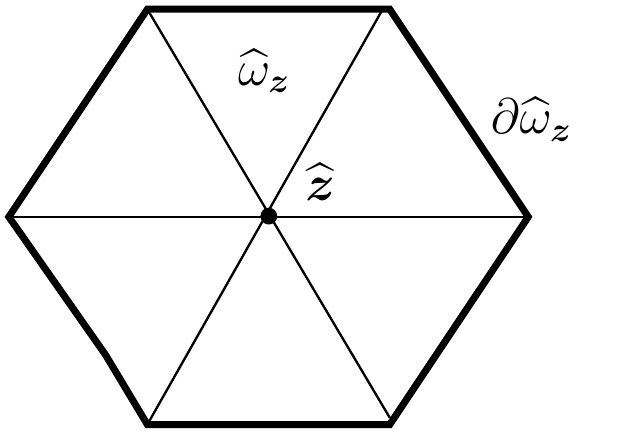}
\caption{\label{fig:reference-patches} Example of an interior reference patch $\widehat\omega_{\boldsymbol{z}}$ }
\end{center}
\end{figure}
We recall that ${\mathcal{V}}$ is the set of all inner vertices, i.e., $\boldsymbol{z} \notin \partial\Gamma$ for all $\boldsymbol{z} \in {\mathcal{V}}$. 
We define the patch size for a vertex $z \in {\mathcal{V}}$ as $h_{\boldsymbol{z}}:=\operatorname{diam}(\omega_{\boldsymbol{z}})$ and
stress that $\gamma$-shape regularity implies $h_{\boldsymbol{z}} \sim h|_K$ for all elements $T \subseteq \omega_{\boldsymbol{z}}$.

Due to shape regularity, the number of elements meeting at a vertex is bounded. 
The following definition allows us to transform the vertex patches $\omega_{\boldsymbol{z}}$ to a finite number of reference configurations.

\begin{definition}[reference patch] 
\label{definition:reference-patches}
Let $\omega_{\boldsymbol{z}}$ be an interior patch consisting of $n$ triangles. 
We may define a Lipschitz continuous bijective map $F_{\boldsymbol{z}}: \widehat \omega_{\boldsymbol{z}} \rightarrow \omega_{\boldsymbol{z}}$, 
where $\widehat \omega_{\boldsymbol{z}} \subseteq \mathbb{R}^2$ is a regular polygon with $n$ edges, 
see Fig.~\ref{fig:reference-patches}.  The map $F_{\boldsymbol{z}}$ is piecewise defined
as a concatenation of affine maps from triangles comprising the regular polygon to the reference element $\widehat K$
with the element maps $F_K$. We note that $F_{\boldsymbol{z}}(\partial\widehat\omega_{\boldsymbol{z}})  = \partial \omega_{\boldsymbol{z}}$. 
\end{definition}

The following lemma tells us how the hypersingular integral operator behaves under
the patch transformation.
\begin{lemma}
\label{item:lemma:reference-patch-hypsing_scaling}
Let $u \in \widetilde{H}^{1/2}(\Gamma)$ with $\operatorname{supp}(u) \subseteq \overline{\omega_{\boldsymbol{z}}}$.
Define $\widehat u:= u \circ F_{\boldsymbol{z}}$ and the integral operator $\widehat{D}$ as
$\widehat{D} \widehat{u}\,(x):=  - \partial_{n_x }^{int} \int_{\widehat{\omega_{\boldsymbol{z}}}}{ \partial_{n_y}^{int} G(x,y) \widehat{u}(y) \; ds(y)}
\quad \text{for } x \in \widehat{\omega}_{\boldsymbol{z}}$, where we treat $\widehat{\omega_{\boldsymbol{z}}} \subseteq \mathbb{R}^2$ as a screen embedded in $\mathbb{R}^3$ and $\partial_{n_x}$
is the derivative in direction of the vector $(0,0,1)$.
Then the hypersingular operator scales like
$$
\left<D u,u \right>_{\omega_{\boldsymbol{z}}} \sim h_{\boldsymbol{z}} \left<\widehat{D} \widehat u,\widehat u \right>_{\widehat{\omega}_{\boldsymbol{z}}},
$$
where the implied constants depend only on $\Gamma$ and  the $\gamma$-shape regularity of $\mathcal{T}$.
\end{lemma}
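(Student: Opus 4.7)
The strategy is to reduce the bilinear-form scaling to a scaling of $\widetilde{H}^{1/2}$-norms. More precisely, I would establish the three equivalences (a) $\left<Du,u\right>_{\omega_{\boldsymbol{z}}} \sim \|u\|^2_{\widetilde{H}^{1/2}(\omega_{\boldsymbol{z}})}$, (b) $\left<\widehat{D}\widehat{u},\widehat{u}\right>_{\widehat{\omega}_{\boldsymbol{z}}} \sim \|\widehat{u}\|^2_{\widetilde{H}^{1/2}(\widehat{\omega}_{\boldsymbol{z}})}$, and (c) $\|u\|^2_{\widetilde{H}^{1/2}(\omega_{\boldsymbol{z}})} \sim h_{\boldsymbol{z}}\|\widehat{u}\|^2_{\widetilde{H}^{1/2}(\widehat{\omega}_{\boldsymbol{z}})}$, and then chain them. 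Estimate (a) follows from boundedness and ellipticity of $D$ on $\widetilde{H}^{1/2}(\Gamma)$, noting that $\operatorname{supp}u\subseteq\overline{\omega_{\boldsymbol{z}}}$ gives $\left<Du,u\right>_{\omega_{\boldsymbol{z}}}=\left<Du,u\right>_{\Gamma}$, combined with the localization $\|u\|_{\widetilde{H}^{1/2}(\Gamma)}\sim\|u\|_{\widetilde{H}^{1/2}(\omega_{\boldsymbol{z}})}$ provided by Corollary~\ref{cor:restriction_extension} with $\mathcal{O}=\Gamma$. Estimate (b) is the classical ellipticity of the hypersingular operator on the open flat screen $\widehat{\omega}_{\boldsymbol{z}}\subset\mathbb{R}^2$.

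The main computational step is (c). By $\gamma$-shape regularity, the piecewise affine bi-Lipschitz map $F_{\boldsymbol{z}}$ satisfies $|\det DF_{\boldsymbol{z}}|\sim h_{\boldsymbol{z}}^2$ with $|DF_{\boldsymbol{z}}|\sim h_{\boldsymbol{z}}$ and $|DF_{\boldsymbol{z}}^{-1}|\sim h_{\boldsymbol{z}}^{-1}$ elementwise, so a direct change of variables gives
\begin{align*}
\|u\|_{L^2(\omega_{\boldsymbol{z}})}^2 &\sim h_{\boldsymbol{z}}^2\,\|\widehat{u}\|_{L^2(\widehat{\omega}_{\boldsymbol{z}})}^2,\\
\|u\|_{\widetilde{H}^1(\omega_{\boldsymbol{z}})}^2 = \|\nabla_\Gamma u\|_{L^2(\omega_{\boldsymbol{z}})}^2 &\sim \|\widehat{\nabla}\widehat{u}\|_{L^2(\widehat{\omega}_{\boldsymbol{z}})}^2 = \|\widehat{u}\|_{\widetilde{H}^1(\widehat{\omega}_{\boldsymbol{z}})}^2.
\end{align*}
Applying Proposition~\ref{interpolation_theorem}(\ref{item:interpolation_theorem:i}) to the pull-back $T\colon u\mapsto u\circ F_{\boldsymbol{z}}$, viewed simultaneously as $L^2(\omega_{\boldsymbol{z}})\to L^2(\widehat{\omega}_{\boldsymbol{z}})$ (norm $\sim h_{\boldsymbol{z}}^{-1}$) and $\widetilde{H}^1(\omega_{\boldsymbol{z}})\to \widetilde{H}^1(\widehat{\omega}_{\boldsymbol{z}})$ (norm $\sim 1$), yields $\|T\|_{\widetilde{H}^{1/2}\to\widetilde{H}^{1/2}}\lesssim h_{\boldsymbol{z}}^{-1/2}$; the same argument for $T^{-1}$ gives $\|T^{-1}\|_{\widetilde{H}^{1/2}\to\widetilde{H}^{1/2}}\lesssim h_{\boldsymbol{z}}^{1/2}$, and together these constitute (c).

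The main obstacle I anticipate is the closed-surface case $\Gamma=\partial\Omega$ in step (a), where $D$ alone is only semi-elliptic with kernel $\operatorname{span}(1)$. However, since $\boldsymbol{z}\in\mathcal{V}$ is an interior vertex, $\Gamma\setminus\overline{\omega_{\boldsymbol{z}}}$ has positive surface measure, so any $u$ supported in $\overline{\omega_{\boldsymbol{z}}}$ satisfies a Friedrichs-type inequality $\|u\|_{L^2(\Gamma)}\lesssim|u|_{H^{1/2}(\Gamma)}$, which promotes the seminorm ellipticity $\left<Du,u\right>_{\Gamma}\ge c_{ell}|u|_{H^{1/2}(\Gamma)}^2$ to the full $\widetilde{H}^{1/2}(\Gamma)$-bound required in (a). Equivalently, one may work with $\widetilde{D}$ throughout, since the stabilization contribution $\alpha^2\left<u,1\right>_{\Gamma}^2\lesssim\|u\|_{L^2(\Gamma)}^2\lesssim\|u\|_{\widetilde{H}^{1/2}(\Gamma)}^2$ does not disturb the equivalence. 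A secondary, purely bookkeeping issue is that $F_{\boldsymbol{z}}$ is only piecewise affine, so the Jacobian bounds must be understood elementwise; this has no bearing on any of the integral identities used above.
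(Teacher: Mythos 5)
Your proof is correct and follows essentially the same three-step chain as the paper: (a) uniform ellipticity and continuity of $D$ on $\widetilde H^{1/2}(\omega_{\boldsymbol{z}})$, (c) the $L^2$/$H^1$ scalings of the pullback combined with Proposition~\ref{interpolation_theorem}~(\ref{item:interpolation_theorem:i}), and (b) ellipticity of $\widehat D$ on the finitely many reference screens. The only (harmless) divergence is in the closed-surface case of step (a): the paper restores full ellipticity of $D$ by embedding $\widetilde H^{1/2}(\omega_{\boldsymbol{z}})$ into $\widetilde H^{1/2}(\mathcal{O}_j)$ for a proper open subsurface $\mathcal{O}_j$ and invoking Corollary~\ref{cor:restriction_extension}, whereas you use a Friedrichs-type inequality for functions vanishing on $\Gamma\setminus\overline{\omega_{\boldsymbol{z}}}$ -- both are valid, and your route is the cleaner of the two (your secondary ``work with $\widetilde D$ throughout'' aside would need the extra observation that the stabilization term is $O(h_{\boldsymbol{z}}^{3})\langle Du,u\rangle_\Gamma$ to recover the lower bound for $D$ itself, but you do not rely on it).
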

\begin{proof}
We will prove this in three steps:
\begin{enumerate}[(i)]
  \item $\left<D u,u \right>_{\Gamma} \sim \left\|u\right\|_{\widetilde{H}^{1/2}(\omega_{\boldsymbol{z}})}^2$,
  \item $\left\|u\right\|_{\widetilde{H}^{1/2}(\omega_{\boldsymbol{z}})}^2 \sim h_{\boldsymbol{z}} \left\|\widehat{u}\right\|_{\widetilde{H}^{1/2}(\widehat{\omega}_{\boldsymbol{z}})}^2$,
  \item $\left\|\widehat{u}\right\|_{\widetilde{H}^{1/2}(\widehat{\omega}_{\boldsymbol{z}})}^2 \sim \left<\widehat{D} \widehat{u},\widehat{u}\right>_{\widehat{\omega}_{\boldsymbol{z}}}$.
\end{enumerate}
\emph{Proof of (i):} It is well-known that $D$ is continuous and elliptic on $\widetilde{H}^{1/2}(\omega_{\boldsymbol{z}})$. In
our case, the ellipticity constants can be chosen independently of the patch $\omega_{\boldsymbol{z}}$ and instead only depend on
$\Gamma$. To do so we embed the spaces $\widetilde{H}^{1/2}(\omega_{\boldsymbol{z}})$ into finitely 
many larger spaces $\widetilde{H}^{1/2}(\mathcal{O}_j)$, where the 
sub-surfaces $\mathcal{O}_j$ are {\em open} and for each $\boldsymbol{z}$ there is ${\mathcal O}_j$ 
such that $\omega_{\boldsymbol{z}} \subseteq \mathcal{O}_j\subseteq \Gamma$. 
(For the screen problem we may use the single sub-surface $\mathcal{O}:=\Gamma$, for the case
of closed surfaces we can, for example, use $\mathcal{O}_j:=\Gamma \setminus F_j$, where $F_j$ is the $j$-th face 
of the polyhedron $\Omega$ such that $\omega_{\boldsymbol{z}} \cap F_j = \emptyset$).
It is important that these surfaces are open, since for closed surfaces $\Gamma$ we do not have full ellipticity 
of $D$ but only for $\widetilde{D}$, and the stabilization term has a different scaling behavior.
Since $u$ vanishes outside of $\omega_{\boldsymbol{z}}$ we can use the ellipticity on $\widetilde{H}^{1/2}(\mathcal{O}_j)$ to see 
$\left<Du,u \right>_{\Gamma} \sim \left\|u\right\|_{\widetilde{H}^{1/2}(\mathcal{O}_j)}^2$.
By Corollary \ref{cor:restriction_extension} the norms on $\omega_{\boldsymbol{z}}$ and $\mathcal{O}_j$ are equivalent, which implies the statement (i).

\emph{Proof of (ii):}
The scalings of the $L^2$-norm and $H^1$-seminorm (we can use the seminorm, since we are working on $\widetilde{H}^{1/2}$ of an open surface) is well-known to be 
\begin{align*}
  \left\|u\right\|_{L^2(\omega_{\boldsymbol{z}})} &\sim h_{\boldsymbol{z}} \left\|\widehat{u}\right\|_{L^2(\widehat{\omega}_{\boldsymbol{z}})}, \quad
  \left\|\nabla u\right\|_{L^2(\omega_{\boldsymbol{z}})} \sim \left\|\nabla \widehat{u}\right\|_{L^2(\widehat{\omega}_{\boldsymbol{z}})}.
\end{align*}
The interpolation theorem (Proposition~\ref{interpolation_theorem}, (\ref{item:interpolation_theorem:i})) 
then proves part (ii).

\emph{Proof of (iii):} We again use ellipticity and continuity of $\widehat{D}$. Since there are only finitely many reference patches, the constants can be
chosen independently of the individual patches.
\end{proof}

 \section{Condition number of the $hp$-Galerkin matrix}
\label{sec:p_condition}
In this section we investigate the condition number of the unpreconditioned Galerkin matrix to motivate the need
for good preconditioning.
We will work on the reference triangle $\widehat{K}=\operatorname{conv}\left\{(0,0),(1,0),(0,1)\right\}$ and
will need the following well-known inverse inequalities for polynomials on $\widehat{K}$:
\begin{proposition}[{Inverse inequalities, \cite[Theorem 4.76]{schwab_p_fem}}]
  Let $\widehat{K}$ denote the reference triangle and let ${\mathcal E}$ be one of its edges.
  There exists a constant $C$ such that for all $p \in \mathbb{N}$ and for all $v \in P^p(\widehat{K})$ the following 
  estimates hold:
  \begin{align}
    \left\|v\right\|_{L^{\infty}(\widehat{K})}&\leq C p^2 \left\|v\right\|_{L^2(\widehat{K})} \label{eqn:inv_est_infty_l2}, \\
    \left\|v\right\|_{L^{\infty}(\widehat{K})}&\leq C \sqrt{\log(p+1)} \left\|v\right\|_{H^1(\widehat{K})} \label{eqn:inv_est_infty_h1}, \\
    \left\|v\right\|_{H^1(\widehat{K})}&\leq C p^2 \left\|v\right\|_{L^2(\widehat{K})} \label{eqn:inv_est_h1_l2}, \\
    \left\|v\right\|_{L^2({\mathcal E})}&\leq C p \left\|v\right\|_{L^2(\widehat{K})} \label{eqn_inv_est_trace}. 
  \end{align}
  \qed
\end{proposition}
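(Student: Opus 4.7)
My plan is to reduce each of the four inverse inequalities to classical one-dimensional polynomial estimates, either via the Duffy transformation to pull $P^p(\widehat K)$ into a tensor-product polynomial space on the square $[-1,1]^2$, or via expansion in an orthonormal Dubiner/Jacobi-type basis of $P^p(\widehat K)$.

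The Markov-type estimate (\ref{eqn:inv_est_h1_l2}) I would obtain from the classical 1D Markov--Bernstein inequality $\|\ell'\|_{L^2(-1,1)}\leq Cp^2 \|\ell\|_{L^2(-1,1)}$, which extends by a tensor-product argument on $[-1,1]^2$ and is then transported to $\widehat K$ by the Duffy map (whose Jacobian and co-Jacobian are uniformly controlled in $p$ after the standard handling of the collapsed vertex). The polynomial trace inequality (\ref{eqn_inv_est_trace}) follows cleanly by combining the $p$-independent trace estimate $\|v\|_{L^2(\mathcal E)}^2 \lesssim \|v\|_{L^2(\widehat K)}\bigl(\|v\|_{L^2(\widehat K)} + \|\nabla v\|_{L^2(\widehat K)}\bigr)$ with (\ref{eqn:inv_est_h1_l2}).

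For the $L^\infty$--$L^2$ bound (\ref{eqn:inv_est_infty_l2}) I would use the reproducing-kernel argument. Fix an $L^2(\widehat K)$-orthonormal basis $\{\varphi_n\}$ of $P^p(\widehat K)$. By Cauchy--Schwarz, $|v(x)|^2 \leq \|v\|_{L^2(\widehat K)}^2\sum_n \varphi_n(x)^2$. Since $\dim P^p(\widehat K) = (p+1)(p+2)/2$ and pointwise bounds for each $\varphi_n$ grow at most linearly in its degree (at most $p$), the reproducing sum is $O(p^4)$, which yields the factor $p^2$.

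The main obstacle is the Brezis--Gallouet-type inequality (\ref{eqn:inv_est_infty_h1}), which sits exactly at the borderline failure of $H^1\hookrightarrow L^\infty$ in two space dimensions; the $\sqrt{\log(p+1)}$ loss is genuinely sharp and cannot be removed within the polynomial subspace. I would prove it in one of two ways. The first is to invoke the continuous Brezis--Gallouet inequality $\|v\|_{L^\infty}\lesssim \sqrt{\log(1+\|v\|_{H^2}/\|v\|_{H^1})}\,\|v\|_{H^1}$ and apply the polynomial inverse bound $\|v\|_{H^2}\leq Cp^2\|v\|_{H^1}$, which is proved exactly as in (\ref{eqn:inv_est_h1_l2}). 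The second, more direct, is to expand $v$ in an $L^2(\widehat K)$-orthonormal basis organized by degree, split the coefficients at a cut-off $N \in\{1,\dots,p\}$, and apply Cauchy--Schwarz separately to low and high frequencies; balancing the two contributions against $\|v\|_{L^2}^2 \sim \sum|\widehat v_n|^2$ and a frequency-weighted lower bound for $\|v\|_{H^1}^2$ produces a harmonic sum $\sum_{k=1}^p k^{-1}\sim \log(p+1)$ and hence the claimed logarithmic factor.
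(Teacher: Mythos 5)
First, a point of reference: the paper does not prove this proposition at all --- it is quoted from \cite[Theorem 4.76]{schwab_p_fem} --- so there is no in-paper argument to compare against. Your overall architecture (reduce everything to one-dimensional Legendre/Jacobi estimates through the collapsed tensor-product structure, then derive the $L^\infty$ and trace bounds from the Markov inequality \eqref{eqn:inv_est_h1_l2}) is essentially the route taken in that reference. Your treatment of \eqref{eqn_inv_est_trace} (multiplicative trace inequality plus \eqref{eqn:inv_est_h1_l2}) and your first route to \eqref{eqn:inv_est_infty_h1} (continuous Brezis--Gallou\"et combined with $\left\|v\right\|_{H^2(\widehat K)}\leq Cp^2\left\|v\right\|_{H^1(\widehat K)}$, which follows by applying \eqref{eqn:inv_est_h1_l2} componentwise to $\nabla v\in \bigl(P^{p-1}(\widehat K)\bigr)^2$) are complete and correct.

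Two steps are, however, glossed over in a way that hides where the actual work lies. (i) For \eqref{eqn:inv_est_h1_l2}, the Duffy map $D(\xi,\eta)=\bigl(\tfrac14(1+\xi)(1-\eta),\tfrac12(1+\eta)\bigr)$ does \emph{not} have uniformly controlled co-Jacobian: $\partial_x$ pulls back to $\tfrac{4}{1-\eta}\,\partial_\xi$, which blows up at the collapsed vertex $\eta=1$, and the surface measure degenerates there as well, so the tensorized 1D Markov inequality on $[-1,1]^2$ cannot be transported to $\widehat K$ by a change of variables alone. One must use that $\widehat v=v\circ D$ lies in the warped space spanned by $P_i(\xi)\bigl(\tfrac{1-\eta}{2}\bigr)^i P_j^{(2i+1,0)}(\eta)$, so that $\partial_\xi\widehat v$ carries the compensating factor $\bigl(\tfrac{1-\eta}{2}\bigr)^i$ with $i\geq1$, and then invoke weighted Jacobi--Markov inequalities; this \emph{is} the proof, not a routine ``handling of the collapsed vertex.'' (ii) In the reproducing-kernel argument for \eqref{eqn:inv_est_infty_l2}, the bound $\left\|\varphi_n\right\|_{L^\infty(\widehat K)}\lesssim \deg\varphi_n$ for an $L^2(\widehat K)$-orthonormal basis is precisely the statement that needs proof (again via Jacobi asymptotics for the Dubiner basis); it is true, and $\sup_x\sum_n\varphi_n(x)^2\sim p^4$ with the maximum at the vertices, but it does not follow from a dimension count. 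Both issues can be bypassed once \eqref{eqn:inv_est_h1_l2} is established: Agmon's inequality $\left\|v\right\|_{L^\infty}^2\lesssim\left\|v\right\|_{L^2}\left\|v\right\|_{H^2}$ in two dimensions together with $\left\|v\right\|_{H^2}\leq Cp^4\left\|v\right\|_{L^2}$ gives \eqref{eqn:inv_est_infty_l2} directly. Finally, your alternative route (b) to \eqref{eqn:inv_est_infty_h1} presupposes a lower bound of the form $\left\|v\right\|_{H^1}^2\gtrsim\sum_n(\deg\varphi_n)^{\sigma}|\widehat v_n|^2$ for the degree-ordered $L^2$-orthogonal expansion; no such clean two-sided Bernstein-type bound is available for orthogonal polynomials on the triangle, so you should commit to route (a).
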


First we investigate the $L^2$ and $H^1$ conditioning of our basis on the reference triangle.
\begin{lemma}
  \label{condition_upper_bound_l2}
  Let $u \in P^p(\widehat{K})$ and let $\alpha^{\mathcal{V}}_{j}$, $\alpha_j^{\mathcal{E}_m}$, $\alpha^\mathcal{I}_{(ij)}$ be the coefficients with respect to
  the basis in Definition~\ref{definition_basis}, i.e., 
  we decompose $u= u_{\mathcal{V}} + u_{\mathcal{E}_1} + u_{\mathcal{E}_2} + u_{\mathcal{E}_3} + u_\mathcal{I}$ with  
    \begin{align}
  \label{eq:condition_upper_bound_l2-10}
      u_{\mathcal{V}}&=\sum_{j=1}^{3}{\alpha^{\mathcal{V}}_j \varphi^{\mathcal{V}}_j}, \quad
      u_{\mathcal{E}_m}=\sum_{j=0}^{p-2}{\alpha^{\mathcal{E}_m}_j \varphi^{\mathcal{E}_m}_j}, \quad
      u_{\mathcal{I}}=\sum_{i+j \leq p-3}{\alpha^\mathcal{I}_{(ij)} \varphi^{\mathcal{I}}_{(ij)} }.
    \end{align}
  Then for a constant $C > 0$ that does not depend on $u$ or $p$:
  \begin{align}
    \left\|u_{\mathcal{V}}\right\|_{L^2(\widehat{K})}^2 &  \leq C \sum_{j=1}^{3}{\left|\alpha^{\mathcal{V}}_j\right|^2}, 
    &\left\|u_{\mathcal{E}_m}\right\|_{L^2(\widehat{K})}^2 &  \leq C \sum_{j=0}^{p-2}{\left|\alpha^{\mathcal{E}_m}_j\right|^2}, 
    &\left\|u_{\mathcal{I}}\right\|_{L^2(\widehat{K})}^2 &  = \sum_{i+j\leq p-3}{\left|\alpha^{\mathcal{I}}_{(ij)}\right|^2}. \label{eqn:inner_l2_eqality}
  \end{align}
  Combined this gives:
  \begin{align}
    \label{eqn:condition_upper_bound_l2_full}
    \left\|u\right\|_{L^2(\widehat{K})}^2 &\leq C \Bigl( \sum_{j=1}^{3}{\left|\alpha^{\mathcal{V}}_{j}\right|^2} + \sum_{m=1}^{3}{\sum_{j=0}^{p-2}{ \left|\alpha_j^{\mathcal{E}_m}\right|}^2 }+  \sum_{i+j \leq p-3}{\left|\alpha^\mathcal{I}_{(ij)}\right|^2}\Bigr).
  \end{align}
  \begin{proof}
    The estimate for $u_{\mathcal{V}}$ in (\ref{eqn:inner_l2_eqality}) is clear. 
    For the edge contributions in (\ref{eqn:inner_l2_eqality}), we restrict ourselves to the edge $(0,0)-(1,0)$, i.e., $m=1$ and drop the index $m$ in the notation. The other edges can be treated analogously. 
    \begin{align*}
      \left\|u_{\mathcal{E}}\right\|_{L^2(\widehat{K})}^2 &= \left\|\sum_{j=0}^{p-2}{ \alpha^{\mathcal{E}}_j 
\sqrt{\frac{2i+3}{2}} L^{\mathcal{S}}_{i+2}(\lambda_1 - \lambda_2, \lambda_1 + \lambda_2)}\right\|^2_{L^2(\widehat{K})}  \\
      &= 
      \sum_{i,j=0}^{p-2}{ \alpha^\mathcal{E}_j \alpha^\mathcal{E}_i
        \int_{\widehat{K}}{ \sqrt{\frac{2i+3}{2}} L^{\mathcal{S}}_{i+2}(\lambda_1 - \lambda_2, \lambda_1 + \lambda_2)} \sqrt{\frac{2j+3}{2}}L^{\mathcal{S}}_{j+2}(\lambda_1 - \lambda_2, \lambda_1 + \lambda_2)} dx\\
      &= \frac{1}{4} \sum_{i,j=0}^{p-2}{\alpha^\mathcal{E}_j \alpha^\mathcal{E}_i
        \int_{-1}^{1}{ \int_{-1}^{1}{\sqrt{\frac{2i+3}{2}}
        L_{i+2}(\xi) \sqrt{\frac{2j+3}{2}}L_{j+2}(\xi) \left(\frac{1-\eta}{2}\right)^{i+j+5}   d\xi d\eta }}}.
    \end{align*}
    In the last step we transformed the reference triangle to $(-1,1)\times (-1,1)$
    via the map 
    ${(\xi,\eta)\mapsto(\frac{1}{4}(1+\xi)(1-\eta),\frac{1}{2}(1+\eta))}$.

    It is well-known (see \cite[p.~{65}]{schwab_p_fem}) that the 1D-mass matrix M of 
the integrated Legendre polynomials is 
    pentadiagonal, and the non-zero entries satisfy $\left|M_{(ij)}\right| \sim \frac{1}{(i+1)\,(j+1)}$. 
    It is easy to check that $ \left|\int_{-1}^{1}{\left(\frac{1-\eta}{2}\right)^{i+j+5} d\eta}\right| \leq C (i+j+6)^{-1}$.
    Together with a Cauchy-Schwarz estimate, we obtain:
    \begin{align*}
      \left\|u_{\mathcal{E}}\right\|_{L^2(\widehat{K})}^2 &
      \lesssim \sum_{j=0}^{p-2}{\left|\alpha^{\mathcal{E}}_j\right|^2 \frac{1}{(j+1)^3}}
      \lesssim \sum_{j=0}^{p-2}{\left|\alpha^{\mathcal{E}}_j\right|^2}.
    \end{align*}

    The bubble basis functions are chosen $L^2$-orthogonal. Thus, using our scaling of the 
bubble basis functions, 
    \begin{align*}
      \left\|u_{\mathcal{I}}\right\|_{L^2(\widehat{K})}^2 & = 
\sum_{i+j \leq p-3}{ \left|\alpha^\mathcal{I}_{(ij)}\right|^2 \left\|\varphi_{(ij)}\right\|_{L^2(\widehat{K})}^2} = 
\sum_{i+j \leq p-3}  \left|\alpha^\mathcal{I}_{(ij)}\right|^2 . 
    \end{align*}
    Finally, we split the function $u$ into vertex, edge and inner components, apply the triangle inequality and get
    \begin{align*}
      \left\|u\right\|_{L^2(\widehat{K})}^2 &\leq 5 \left\|u_{\mathcal{V}}\right\|^2_{L^2(\widehat{K})} + 5 \sum_{m=1}^{3}{\left\|u_{\mathcal{E}_j}\right\|_{L^2(\widehat{K})}^2} + 5 \left\|u_\mathcal{I}\right\|_{L^2(\widehat{K})}^2, 
    \end{align*}    
    which is \eqref{eqn:condition_upper_bound_l2_full}.
  \end{proof}
\end{lemma}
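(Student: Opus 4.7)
The plan is to bound the three components $u_{\mathcal V}$, $u_{\mathcal E_m}$, $u_{\mathcal I}$ separately and then combine them via the triangle inequality to obtain \eqref{eqn:condition_upper_bound_l2_full}. The vertex estimate is immediate: the barycentric coordinates $\lambda_i$ are bounded pointwise by $1$ on $\widehat K$, so each $\|\varphi_j^{\mathcal V}\|_{L^2(\widehat K)}$ is bounded by an absolute constant and the triangle inequality applied to the finite sum over $j=1,2,3$ gives the claim. The interior estimate is also essentially by construction: the normalization constants $c_{ij}$ are chosen so that $\|\varphi_{(i,j)}^{\mathcal I}\|_{L^2(\widehat K)}=1$, and the bubbles are $L^2$-orthogonal. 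Orthogonality is the standard Koornwinder/Dubiner fact: after the collapsed tensor product change of variables, the factor $\lambda_1\lambda_2\lambda_3$ supplies precisely the weight for which the Jacobi polynomials $P_i^{\mathcal S,(2,2)}$ and $P_j^{(2i+5,2)}$ are pairwise orthogonal in the two separated variables. Hence equality holds in the interior bound.

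The substantive work is the edge case, and my plan is to reduce it to the one-dimensional mass matrix of integrated Legendre polynomials. By symmetry it suffices to treat the edge $\mathcal E_1$ joining $(0,0)$ and $(1,0)$. The map $(\xi,\eta)\mapsto(\tfrac14(1+\xi)(1-\eta),\tfrac12(1+\eta))$ carries $(-1,1)^2$ onto $\widehat K$ with Jacobian proportional to $(1-\eta)/8$. Under this change of variables, $L^{\mathcal S}_{i+2}(\lambda_1-\lambda_2,\lambda_1+\lambda_2)$ becomes a product of $L_{i+2}(\xi)$ with a power $((1-\eta)/2)^{i+2}$, so the integral $\langle \varphi_i^{\mathcal E_1},\varphi_j^{\mathcal E_1}\rangle_{L^2(\widehat K)}$ factors into a product of a $1$D integral in $\xi$ and a $1$D integral in $\eta$.

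The two facts I would then use are: (a) the mass matrix $M_{ij}=\int_{-1}^{1}L_{i+2}(\xi)L_{j+2}(\xi)\,d\xi$ of integrated Legendre polynomials is pentadiagonal with $|M_{ij}|\lesssim (i+1)^{-1}(j+1)^{-1}$ (a classical property recorded in \cite{schwab_p_fem}), and (b) $\int_{-1}^{1}((1-\eta)/2)^{i+j+5}\,d\eta=2/(i+j+6)$. Folding in the normalization factor $\sqrt{(2i+3)/2}\sqrt{(2j+3)/2}$, the resulting edge mass matrix $\widetilde M$ is still pentadiagonal (so only indices with $|i-j|\le 2$ matter) and its entries decay like $(i+j+6)^{-1}$ after the $\sqrt{2i+3}\sqrt{2j+3}$ normalization cancels the $(i+1)^{-1}(j+1)^{-1}$. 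Hence the row sums of $|\widetilde M|$ are uniformly bounded in $p$, so a Cauchy–Schwarz (or direct) estimate yields $\|u_{\mathcal E_m}\|_{L^2(\widehat K)}^2\lesssim \sum_{j=0}^{p-2}|\alpha_j^{\mathcal E_m}|^2$.

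The main obstacle is the bookkeeping for this edge estimate: keeping track of the various scaling factors after the collapsed tensor product substitution and using the correct pentadiagonal structure of the integrated-Legendre mass matrix so that the $p$-dependent factors cancel. Once the three bounds are in hand, \eqref{eqn:condition_upper_bound_l2_full} follows by writing $u=u_{\mathcal V}+\sum_m u_{\mathcal E_m}+u_{\mathcal I}$ and applying the elementary inequality $(a_1+\cdots+a_5)^2\le 5(a_1^2+\cdots+a_5^2)$ in $L^2$.
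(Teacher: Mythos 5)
Your proposal is correct and follows essentially the same route as the paper: the vertex and interior bounds are handled identically, and the edge bound uses the same collapsed tensor product substitution, the pentadiagonal integrated-Legendre mass matrix with $|M_{ij}|\sim (i+1)^{-1}(j+1)^{-1}$, and the decay of the $\eta$-integral. (Your bookkeeping of the normalization is in fact slightly generous — the $\sqrt{(2i+3)(2j+3)}$ factor only cancels half a power of $(i+1)(j+1)$, so the normalized entries are even smaller than the $(i+j+6)^{-1}$ you claim — but this only strengthens the row-sum bound.)
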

More interesting are the reverse estimates.
\begin{lemma}
  \label{condition_lower_bound}
There is a constant $C > 0$ independent of $p$ such for every $u \in P^p(\widehat{K})$ the coefficients of its 
representation in the basis of Definition~\ref{definition_basis} as in Lemma~\ref{condition_upper_bound_l2} satisfy: 
     \begin{align*}
\mbox{vertex parts:} &\qquad     \sum_{j=1}^{3}{\left|\alpha^{\mathcal{V}}_j\right|^2} \leq C  p^{4} \left\|u\right\|_{L^2(\widehat{K})}^2 \quad  \text{ as well as } \quad
    \sum_{j=1}^{3}{\left|\alpha^{\mathcal{V}}_j\right|^2} \leq C \log(p+1) \left\|u\right\|_{H^1(\widehat{K})}^2,\\
  \mbox{edge parts:} & \qquad 
    \sum_{j=0}^{p-2}{\left|\alpha^{\mathcal{E}_m}_j\right|^2} \leq C p^6 \left\|u\right\|_{L^2(\widehat{K})}^2 \quad \text{ and } \quad
    \sum_{j=0}^{p-2}{\left|\alpha^{\mathcal{E}_m}_j\right|^2} \leq C p^2 \left\|u\right\|_{H^1(\widehat{K})}^2.
  \end{align*}
  Moreover, if $u$ vanishes on $\partial \widehat{K}$, then 
  \begin{align}
    \label{eq:condition_inner_l2_equality}
    \sum_{i+j\leq p-3}{\left|\alpha^{\mathcal{I}}_{(ij)}\right|^2}&= \left\|u\right\|_{L^2(\widehat{K})}^2. 
  \end{align}
  \begin{proof}
    Since $\alpha^{\mathcal{V}}_j=u(\boldsymbol{z}_j)$ where $\boldsymbol{z}_j$ denotes the $j-th$ vertex,
    we can use the $L^\infty$-inverse estimates \eqref{eqn:inv_est_infty_l2} and \eqref{eqn:inv_est_infty_h1} to get estimates
    for the vertex part.

    For the edge parts, we again only consider the bottom edge, ${\mathcal E} = {\mathcal E}_m$ with $m=1$.
    First we assume that $u$ vanishes in all vertices. If we consider the restriction of $u$ to the edge ${\mathcal E}$
    we only have contributions by the edge basis, i.e., we can write
    \begin{align*}
      u\left(x,0\right)&=\sum_{i=0}^{p-2}{\alpha^{\mathcal{E}}_i \sqrt{\frac{2i+3}{2}} L_{i+2}( 2x-1)}, \quad x \in (0,1), \\
      \frac{\partial}{\partial x} u(x,0)&=\sum_{i=0}^{p-2}{\alpha^{\mathcal{E}}_i 2\;\sqrt{\frac{2 i+3}{2}} \ell_{i+1}(2x-1)}, \quad x \in (0,1). 
    \end{align*}
    
    The factor was chosen to get an $L^2$-normalized basis, since we have $\left\|\ell_{i+1}\right\|_{L^2(-1,1)}^2=\frac{2}{2 i+3}$.
    The Legendre polynomials are orthogonal on $(-1,1)$, and therefore simple calculations show
    \begin{align*}
      \left\|\frac{\partial u}{\partial x}\right\|_{L^2(\mathcal{E})}^2&=2\sum_{i=0}^{p-2}{\left|\alpha^{\mathcal{E}}_i\right|^2}.
    \end{align*}

    If we consider a general $u \in P^p(\widehat{K})$, we apply the previous estimate to $u_2:=u - I^1 u$ where $I^1$ denotes the
    nodal interpolation operator to the linears. Then we get from the triangle inequality
    \begin{align*}
      \sum_{i=0}^{p-2}{\left|\alpha^{\mathcal{E}}_i\right|^2} &
      \leq 2\left\|\frac{\partial u}{\partial x}\right\|_{L^2(\mathcal{E})}^2 + 
      2 \left\|\frac{\partial I^1u}{\partial x}\right\|_{L^2(\mathcal{E})}^2.
    \end{align*}
    We apply the trace estimate \eqref{eqn_inv_est_trace} to the first part
    and the trace and norm equivalence for the second. We obtain:
    \begin{align*}
      \sum_{i=0}^{p-2}{\left|\alpha^{\mathcal{E}}_i\right|^2} &\lesssim  p^2 \left\|\frac{\partial u}{\partial x}\right\|_{L^2(\widehat{K})}^2 + \left\|I^1u\right\|_{L^2(\widehat{K})}^2.
    \end{align*}
    The $H^1$ estimate then follows from the $L^\infty$ estimate 
    for the nodal interpolant \eqref{eqn:inv_est_infty_h1}. 
    For the $L^2$ estimate we then simply use the inverse estimate \eqref{eqn:inv_est_h1_l2}.

    For the equality \eqref{eq:condition_inner_l2_equality} we note that if $u|_{\partial T} = 0$ then 
    $u=u_{\mathcal{I}}$ and thus we can use the equality in \eqref{eqn:inner_l2_eqality}.
  \end{proof}
\end{lemma}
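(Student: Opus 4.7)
The plan is to treat the three blocks of coefficients—vertex, edge, and interior—separately, exploiting the specific structural feature of each block: point evaluation for vertices, edge-trace orthogonality for edges, and $L^2$-orthogonality for interiors.

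For the vertex coefficients, the key observation is that the vertex basis $\varphi^{\mathcal V}_i = \lambda_i$ is dual to point evaluation at the three vertices. Looking at $L_{i+2}^{\mathcal S}(s,t)=t^{i+2}L_{i+2}(s/t)$ with $L_{i+2}$ vanishing at $\pm 1$ (for $i+2 \geq 2$), one sees that each edge basis function carries the factor $\lambda_{e_1}\lambda_{e_2}$ and each interior basis function carries $\lambda_1\lambda_2\lambda_3$; hence both vanish at every vertex. Thus $\alpha^{\mathcal V}_j = u(\boldsymbol z_j)$ and $\sum_{j=1}^{3}|\alpha^{\mathcal V}_j|^2 \leq 3\,\|u\|_{L^\infty(\widehat K)}^2$, after which \eqref{eqn:inv_est_infty_l2} and \eqref{eqn:inv_est_infty_h1} deliver the two bounds $p^4\|u\|_{L^2(\widehat K)}^2$ and $\log(p+1)\|u\|_{H^1(\widehat K)}^2$ directly.

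For the edge coefficients I work on one edge at a time, say $\mathcal E = \mathcal E_1$. First I reduce to the case in which $u$ vanishes at all three vertices by splitting $u = (u - I^1 u) + I^1 u$ with $I^1$ the nodal interpolant onto $P^1(\widehat K)$: because edge and interior basis functions contain no linear part, the edge coefficients of $u$ coincide with those of $u - I^1 u$. On $\mathcal E$, the restriction $(u - I^1u)|_{\mathcal E}$ expands purely in the edge basis; using the identity $L'_{i+2} = \ell_{i+1}$ from \eqref{eq:def_legendre_poly} together with the $L^2(-1,1)$-orthogonality of the Legendre polynomials and the chosen normalization $\sqrt{(2i+3)/2}$, differentiating in the edge direction yields, up to constants, an orthonormal expansion and hence the identity
\begin{align*}
\sum_{i=0}^{p-2}|\alpha^{\mathcal E}_i|^2 \sim \|\partial_x(u-I^1u)\|_{L^2(\mathcal E)}^2 \lesssim \|\partial_x u\|_{L^2(\mathcal E)}^2 + \|\partial_x I^1u\|_{L^2(\mathcal E)}^2.
\end{align*}
The trace inverse inequality \eqref{eqn_inv_est_trace} controls the first term by $p^2\|u\|_{H^1(\widehat K)}^2$, and the second by $\|u\|_{L^\infty(\widehat K)}^2$ since $I^1 u$ is linear with vertex values bounded pointwise by $\|u\|_{L^\infty(\widehat K)}$. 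Feeding this into \eqref{eqn:inv_est_infty_h1} yields the $p^2\|u\|_{H^1(\widehat K)}^2$ bound (the $\log(p+1)$ contribution is absorbed into $p^2$), and then \eqref{eqn:inv_est_h1_l2} upgrades it to the $p^6\|u\|_{L^2(\widehat K)}^2$ bound.

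For the interior coefficients under the assumption $u|_{\partial \widehat K} = 0$, the structural observation from the first step already implies $u = u_{\mathcal I}$: the vertex values of $u$ vanish, so $u_{\mathcal V} = 0$, and each edge trace reduces to an expansion in the integrated Legendre basis whose uniqueness forces all edge coefficients to vanish. The $L^2$-orthonormality of $\{\varphi^{\mathcal I}_{(i,j)}\}$, together with the identity already noted in Lemma~\ref{condition_upper_bound_l2}, then gives \eqref{eq:condition_inner_l2_equality}. The main obstacle is the edge step: threading the $I^1 u$ reduction through the trace inverse estimate without losing a power of $p$, and keeping the normalization constants of the Legendre orthogonality straight so that the resulting bounds are sharp.
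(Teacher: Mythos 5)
Your proposal is correct and follows essentially the same route as the paper: point evaluation plus the $L^\infty$ inverse estimates for the vertex coefficients, reduction via the nodal interpolant $I^1$ and the Legendre orthogonality of $\partial_x L_{i+2}$ on the edge combined with the trace inverse estimate for the edge coefficients, and $L^2$-orthonormality of the interior functions for the last identity. The only cosmetic difference is that you bound the $I^1 u$ contribution via $\|\partial_x I^1 u\|_{L^2(\mathcal E)}\lesssim\|u\|_{L^\infty(\widehat K)}$ directly, whereas the paper routes it through $\|I^1 u\|_{L^2(\widehat K)}$ before invoking the same $L^\infty$ estimate; both land on the identical bounds.
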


\begin{lemma}
\label{thm:condition_estimates_tref}
  There exist constants $c_0$, $C_0$, $c_1$, $C_1 >0$ independent of $p$ such that
  for every $u \in P^p(\widehat{K})$ its coefficients in the basis
of Definition~\ref{definition_basis} as in Lemma~\ref{condition_upper_bound_l2} satisfy: 
  \begin{align}
    c_0 \left\|u\right\|_{L^2(\widehat{K})}^2 &\leq \sum_{j=1}^{3}{\left|\alpha^{\mathcal{V}}_{j}\right|^2} + \sum_{m=1}^{3}{\sum_{j=0}^{p-2}{ \left|\alpha_j^{\mathcal{E}_m}\right|}^2 }+
    \sum_{i+j \leq p-3}{\left|\alpha^\mathcal{I}_{(ij)}\right|^2} \leq C_0 p^{6} \left\|u\right\|_{L^2(\widehat{K})}^2,  \label{eqn:condition_estimates_tref_l2}\\
    c_1 p^{-4} \left\|u\right\|_{H^1(\widehat{K})}^2 &
    \leq \sum_{j=1}^{3}{\left|\alpha^{\mathcal{V}}_{j}\right|^2} + \sum_{m=1}^{3}{\sum_{j=0}^{p-2}{ \left|\alpha_j^{\mathcal{E}_m}\right|}^2 }+ \sum_{i+j \leq p-3}{\left|\alpha^\mathcal{I}_{(ij)}\right|^2}
    \leq C_1 p^{2} \left\|u\right\|_{H^{1}(\widehat{K})}^2  \label{eqn:condition_estimates_tref_h1}.
  \end{align}
  \begin{proof}[Proof of \eqref{eqn:condition_estimates_tref_l2}: \nopunct]
    The lower bound was already shown in Lemma~\ref{condition_upper_bound_l2}.
    For the upper bound we apply the preceding Lemma~\ref{condition_lower_bound} to $u$ and see
    $\sum_{j=1}^{3}{\left|\alpha^{\mathcal{V}}_j\right|^2} \lesssim  p^{4} \left\|u\right\|_{L^2(\widehat{K})}^2$.
    For any edge ${\mathcal E}_m$ we get
    \begin{align*}
      \sum_{j=0}^{p-2}{\left|\alpha^{\mathcal{E}_m}_j\right|^2} &\lesssim p^6 \left\|u\right\|_{L^2(\widehat{K})}^2.
    \end{align*}
    Next we set $u_2:=u- u_{\mathcal{V}} - u_{\mathcal{E}}$, where $u_{\mathcal{E}}$ is the sum of the edge contributions $u_{\mathcal{E}_m}$. This function vanishes on the boundary of $\widehat{K}$ and we can apply Lemma~\ref{condition_lower_bound} to get:
    \begin{align*}
      \sum_{i+j\leq p-3}{\left|\alpha^{\mathcal{I}}_{(ij)}\right|^2}&\lesssim \left\|u_2\right\|_{L^2(\widehat{K})}^2.
    \end{align*}
    Since we can always estimate the $L^2$ norms by the $\ell^2$ norms of the coefficients 
(Lemma~\ref{condition_upper_bound_l2}), we obtain 
    \begin{align*}
      \left\|u_2\right\|_{L^2(\widehat{K})}^2&\leq 3\left\|u\right\|_{L^2(\widehat{K})}^2 + 3\left\|u_{\mathcal{V}}\right\|_{L^2(\widehat{K})}^2 + 3\left\|u_{\mathcal{E}}\right\|_{L^2(\widehat{K})}^2
      \lesssim \left\|u\right\|_{L^2(\widehat{K})}^2 +  p^6 \left\|u\right\|_{L^2(\widehat{K})}^2 + p^4 \left\|u\right\|_{L^2(\widehat{K})}^2.
    \end{align*}
    {\em Proof of \eqref{eqn:condition_estimates_tref_h1}:} The proof for the upper $H^1$ estimate works along the same lines, but using the sharper $H^1$-estimates from 
    Lemma \ref{condition_lower_bound} for the vertex and edge parts.
    For the lower estimate, we just make use of the inverse estimate \eqref{eqn:inv_est_h1_l2} 
    and the fact that the $L^2$-norm is uniformly bounded by the coefficients, to conclude the proof.
  \end{proof}
\end{lemma}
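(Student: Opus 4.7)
The plan is to assemble \eqref{eqn:condition_estimates_tref_l2} and \eqref{eqn:condition_estimates_tref_h1} from the building blocks already established in Lemmas~\ref{condition_upper_bound_l2} and \ref{condition_lower_bound}. The lower bound of \eqref{eqn:condition_estimates_tref_l2} is just \eqref{eqn:condition_upper_bound_l2_full} rearranged, so nothing new is required. For the lower bound of \eqref{eqn:condition_estimates_tref_h1} I would chain this with the inverse estimate \eqref{eqn:inv_est_h1_l2}, which gives $\|u\|_{H^1(\widehat K)}^2 \lesssim p^4 \|u\|_{L^2(\widehat K)}^2$ and hence $p^{-4}\|u\|_{H^1(\widehat K)}^2 \lesssim \|u\|_{L^2(\widehat K)}^2 \lesssim \sum|\alpha_\cdot|^2$.

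For the upper bound in \eqref{eqn:condition_estimates_tref_l2} I would treat the three blocks of coefficients in turn. Lemma~\ref{condition_lower_bound} directly yields the vertex estimate $\sum_j|\alpha^{\mathcal V}_j|^2 \lesssim p^4 \|u\|_{L^2(\widehat K)}^2$ and, for each edge, $\sum_j|\alpha^{\mathcal E_m}_j|^2 \lesssim p^6 \|u\|_{L^2(\widehat K)}^2$. The interior block requires a small detour, because the clean identity \eqref{eq:condition_inner_l2_equality} is only available for polynomials vanishing on $\partial\widehat K$. I would introduce $u_2 := u - u_{\mathcal V} - u_{\mathcal E_1} - u_{\mathcal E_2} - u_{\mathcal E_3}$, observe that in our hierarchical basis $u_2$ has the same interior coefficients as $u$ and vanishes on $\partial\widehat K$, so that \eqref{eq:condition_inner_l2_equality} applies to give $\sum_{i+j\le p-3}|\alpha^{\mathcal I}_{(ij)}|^2 = \|u_2\|_{L^2(\widehat K)}^2$. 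A triangle inequality and the $L^2$-bounds from Lemma~\ref{condition_upper_bound_l2}, combined with the just-established vertex and edge coefficient bounds, then yield $\|u_2\|_{L^2(\widehat K)}^2 \lesssim p^6 \|u\|_{L^2(\widehat K)}^2$.

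The upper bound of \eqref{eqn:condition_estimates_tref_h1} follows the same scheme but uses the sharper $H^1$-statements of Lemma~\ref{condition_lower_bound}, namely $\sum_j|\alpha^{\mathcal V}_j|^2 \lesssim \log(p+1)\|u\|_{H^1(\widehat K)}^2$ and $\sum_j|\alpha^{\mathcal E_m}_j|^2 \lesssim p^2\|u\|_{H^1(\widehat K)}^2$. I would repeat the reduction to $u_2$, apply \eqref{eq:condition_inner_l2_equality}, and bound $\|u_2\|_{L^2(\widehat K)}^2$ by a triangle inequality followed by Lemma~\ref{condition_upper_bound_l2}; the dominant contribution comes from the edge block and produces the claimed $p^2\|u\|_{H^1(\widehat K)}^2$ overall. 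The only part that is not completely routine is the interior block: one must notice that subtracting the vertex and edge components is legal (it does not touch the interior coefficients in this hierarchical basis), that the result vanishes on $\partial\widehat K$ so \eqref{eq:condition_inner_l2_equality} applies, and that the $L^2$ mass of the subtracted low-order pieces is controlled by the vertex and edge coefficient bounds. Everything else is a careful application of the inverse estimates and of Lemmas~\ref{condition_upper_bound_l2}--\ref{condition_lower_bound}.
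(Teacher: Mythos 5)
Your proposal is correct and follows essentially the same route as the paper's proof: the same reduction of the interior block to $u_2 = u - u_{\mathcal V} - u_{\mathcal E}$ vanishing on $\partial\widehat K$, the same use of \eqref{eq:condition_inner_l2_equality} together with the triangle inequality and Lemma~\ref{condition_upper_bound_l2}, and the same inverse estimate \eqref{eqn:inv_est_h1_l2} for the lower $H^1$ bound. No gaps.
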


\begin{figure}
\includegraphics{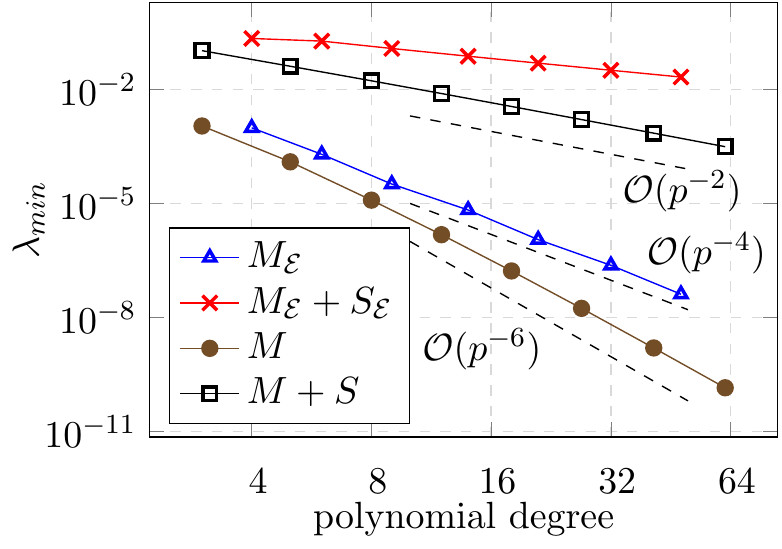}
\includegraphics{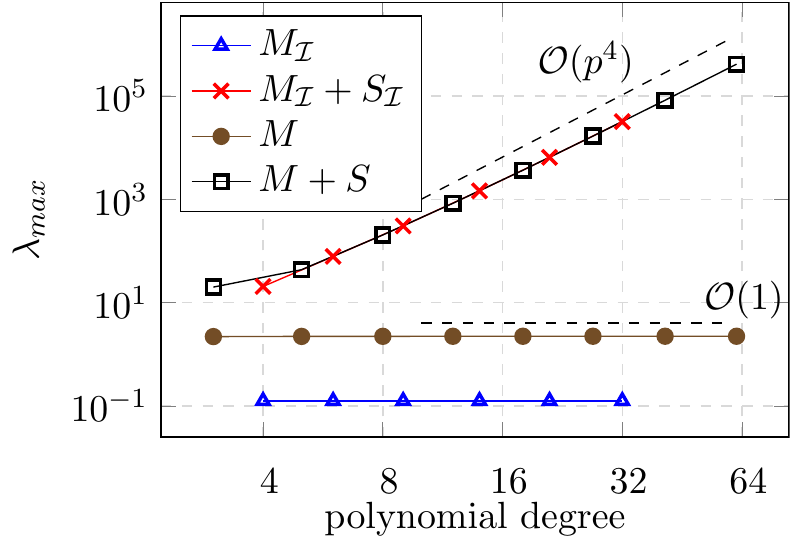}
\caption{Numerical computation of the extremal eigenvalues of the mass matrices and the sum of mass and $H^1$-stiffness matrix $M+S$
for the full system and different sub-blocks.}
\label{fig:numerical_comparison_tref_condition}
\end{figure}

\begin{example}
In Figure~\ref{fig:numerical_comparison_tref_condition} we compare our theoretical bounds on the reference element 
from Lemma~\ref{thm:condition_estimates_tref} with a numerical experiment that studies the maximal and minimal
eigenvalues of the mass matrix $M$ and the stiffness matrix $S$ (corresponding to the bilinear form 
$(\nabla \cdot,\nabla \cdot)_{L^2(\widehat{K})}$). 
We focus on the full system and the subblocks that contributed the highest order in our theoretical
investigations, i.e. the edge blocks $M_{\mathcal{E}}$, $S_{\mathcal{E}}$, and the block of inner basis functions
$M_{\mathcal{I}}$, $S_{\mathcal{I}}$. We see that the estimates
on the full condition numbers are not overly pessimistic: the numerics show a
behavior of the minimal eigenvalue of $\mathcal{O}(p^{5.5})$ instead of $\mathcal{O}(p^6)$. If we focus solely on the edge contributions, we see that the
bound we used for the lower eigenvalue is not sharp there. This can partly be explained by the fact that if no inner basis functions are present
it is possible to improve the estimate \eqref{eqn_inv_est_trace} by a factor of $p$. But since we also need to 
include the coupling of inner and edge basis functions this improvement in order is lost again when looking at the 
full systems.
\hbox{}\hfill\rule{0.8ex}{0.8ex}
\end{example}

The estimates on the reference triangle can now be transferred to the global space $\widetilde{S}^p(\mathcal{T})$ on quasiuniform meshes.
\begin{theorem}
\label{thm:condition_numbers_h1}
Let $\mathcal{T}$ be a quasiuniform triangulation with mesh size $h$. With the polynomial basis on the reference
triangle $\widehat{K}$ given by Definition~\ref{definition_basis}, let $\{\varphi_i\,|\, i=1,\ldots,N\}$ be 
the basis of $\widetilde{S}^p(\mathcal{T})$. 
Then there exist constants $c_0$, $c_{1/2}$, $c_1$, $C_0$, $C_{1/2}$, $C_1>0$ that depend only
on $\Gamma$ and the $\gamma$-shape regularity of $\mathcal{T}$, such that, for every 
$\mathfrak{u} \in \mathbb{R}^N$ and $u =\sum_{j=1}^{N}{\mathfrak{u}_j \varphi_j} \in \widetilde S^p(\mathcal{T})$: 
\begin{align}
  c_0 \frac{1}{h^2} \left\|u\right\|_{L^2(\Gamma)}^2 &\leq \left\|\mathfrak{u}\right\|_{\ell^2}^2  \leq C_0 \frac{p^6}{h^2}\left\|u\right\|_{L^2(\Gamma)}^2,  \label{eqn:condition_numbers_l2}\\
  c_1 p^{-4} \left\|u\right\|_{H^1(\Gamma)}^2 &\leq \left\|\mathfrak{u}\right\|_{\ell^2}^2  \leq C_1 \left(p^2 + h^{-2}\right) \left\|u\right\|_{H^1(\Gamma)}^2, \label{eqn:condition_numbers_h1}\\
  c_{1/2} h^{-1}\,p^{-2}  \left\|u\right\|_{\widetilde{H}^{1/2}(\Gamma)}^2 &\leq \left\|\mathfrak{u}\right\|_{\ell^2}^2  \leq C_{1/2} \left(\frac{p^4}{h} + h^{-2}\right)\left\|u\right\|_{\widetilde{H}^{1/2}(\Gamma)}^2. \label{eqn:condition_numbers_honehalf}
\end{align}
\begin{proof}
  The $L^2$-estimate (\ref{eqn:condition_numbers_l2}) can easily be shown by transforming to 
  the reference element and applying \eqref{eqn:condition_estimates_tref_l2}.

  To prove the other estimates (\ref{eqn:condition_numbers_h1}), (\ref{eqn:condition_numbers_honehalf}), 
  we need the Scott-Zhang projection operator 
$J_{h}:L^2(\Gamma) \rightarrow \widetilde S^1(\mathcal{T})$ as modified in \cite[Section 3.2]{aff_hypsing}. 
It has the following important properties:
  \begin{enumerate}
    \item $J_{h}$ is a bounded linear operator from  $L^2(\Gamma) $ to 
    $(\widetilde{S}^1(\mathcal{T}),\|\cdot\|_{L^2(\Gamma)})$. 
    \item For every $s \in [0,1]$ there holds $\left\|J_{h} v\right\|_{\widetilde{H}^{s}(\Gamma)} \leq C_{stab}(s) \left\|v\right\|_{\widetilde{H}^s(\Gamma)}$ $\forall v \in \widetilde{H}^s(\Gamma)$. 
    \item For every $K \in \mathcal{T}$ let $\omega_K:=\bigcup\left\{K' \in \mathcal{T}: K \cap K' \neq \emptyset\right\}$ 
denote the element patch, i.e., the union of all elements that touch $K$. Then, for all 
$v \in \widetilde{H}^1(\Gamma)$ 
      \begin{align}
\label{scott_zhang_local_l2_approx}
        \left\|\left(1-J_h\right) v\right\|_{L^2(K)}&\leq C_{sz} h_K \left\|\nabla v\right\|_{L^2(\omega_K)},  \\
\label{scott_zhang_local_h1_stab}
        \left\|\nabla \left(1-J_h\right) v\right\|_{L^2(K)}&\leq C_{sz} \left\|\nabla v\right\|_{L^2(\omega_K)}.
      \end{align}
  \end{enumerate}
  The constant $C_{sz}$ depends only on the $\gamma$-shape regularity of $\mathcal{T}$, and $C_{stab}(s)$ additionally
  depends on $\Gamma$ and $s$.

  We will use the following notation: 
  For a function $u \in \widetilde{S}^p(\mathcal{T})$ we will write $\mathfrak{u} \in \mathbb{R}^{N}$ for its 
  representation in the basis $\{\varphi_i\,|\, i=1,\ldots,N\}$. 
  For an element $K \in \mathcal{T}$ we write $\mathfrak{u}|_K$ for
  the part of the coefficient vector that belongs to basis functions whose support intersects the interior of $K$. 
  In addition to the function $u \in \widetilde{S}^p(\mathcal{T})$, we will employ the function $\tilde u:= u - J_h u$. 
Its vector
  representation will be denoted $\tilde {\mathfrak{u}} \in \mathbb{R}^{N}$. Finally, the vector representation 
  of $J_h u$ (again $u \in \widetilde{S}^p(\mathcal{T})$) will be $\mathfrak{J}_h \mathfrak{u} \in \mathbb{R}^{N}$. 

{\em 1.~step:}
  We claim the following stability estimates:
  \begin{align}
    \left\|\mathfrak{J}_h \mathfrak{u}\right\|_{\ell^2}^2&\lesssim h^{-2} \, \left\|J_h u\right\|_{L^2(\Gamma)}^2 \lesssim h^{-2} \left\|u\right\|_{L^2(\Gamma)}^2 \lesssim \left\|\mathfrak{u}\right\|_{\ell^2}^2, 
    \label{est_jhu_coeff}\\
    \left\|J_h u\right\|_{\widetilde{H}^{1/2}(\Gamma)}^2&\lesssim~ h^{-1} \left\|J_h u\right\|_{L^{2}(\Gamma)}^2,
    \label{est_jhu_coeff-foo}\\
    \left\|J_h u\right\|_{\widetilde{H}^{1/2}(\Gamma)}^2&\lesssim h \left\|\mathfrak{u}\right\|_{\ell^2}^2 \label{est_jhu_honehalf}.
  \end{align}
  The inequalities (\ref{est_jhu_coeff}) are just a simple scaling argument combined with the $L^2$ stability 
  of the Scott-Zhang projection and \eqref{eqn:condition_numbers_l2}.
  The inequality (\ref{est_jhu_coeff-foo}) follows from the inverse inequality (note that $J_h u$ has degree 1). 
  Finally, \eqref{est_jhu_honehalf} follows from combining (\ref{est_jhu_coeff-foo}) and \eqref{est_jhu_coeff}.
 
{\em 2.~step:} 
  Next, we investigate the function $\tilde{u}=u - J_h u$. We claim the following estimates:
  \begin{align}
    \left\|\tilde{u}\right\|_{L^2(\Gamma)}^2 &\lesssim h^2 \left\|\mathfrak{u}\right\|_{\ell^2}^2 \label{est_utilde_l2}, \\
    \left\|\tilde{u}\right\|_{H^1(\Gamma)}^2 &\lesssim p^4 \left\|\mathfrak{u}\right\|_{\ell^2}^2 \label{est_utilde_h1}, \\
    \left\|\tilde{u}\right\|_{\widetilde{H}^{1/2}(\Gamma)}^2 &\lesssim p^2\,h \left\|\mathfrak{u}\right\|_{\ell^2}^2 \label{est_utilde_honehalf}.
  \end{align}
  The estimate \eqref{est_utilde_l2} 
is a simple consequence of
  \eqref{eqn:condition_numbers_l2} and the $L^2$-stability of the Scott-Zhang operator $J_h$. 
  For the proof of \eqref{est_utilde_h1},  we combine a simple scaling argument 
  with \eqref{eqn:condition_estimates_tref_h1} and the stability estimate \eqref{est_jhu_coeff} to get
  \begin{align*}
    \left\|\tilde{u}\right\|_{H^1(\Gamma)}^2 
&\stackrel{~\eqref{eqn:condition_estimates_tref_h1}}{\lesssim}
  p^4 \left\|\mathfrak{\tilde{u}}\right\|_{\ell^2}^2
  \stackrel{~\eqref{est_jhu_coeff}}{\lesssim} p^4 \left\|\mathfrak{u}\right\|_{\ell^2}^2.
  \end{align*}
  The bound \eqref{est_utilde_honehalf} follows from the interpolation estimate of 
  Proposition~\ref{interpolation_theorem}, (\ref{item:interpolation_theorem:ii}) and the 
  estimates \eqref{est_utilde_l2}--\eqref{est_utilde_h1}.

 {\em 3.~step:}  We assert: 
  \begin{align}
    \left\|\mathfrak{\tilde{u}}\right\|_{\ell^2}^2&\lesssim \frac{p^6}{h^2} \left\|u\right\|_{L^2(\Gamma)}^2 \label{est_utilde_coeff_l2},\\
    \left\|\mathfrak{\tilde{u}}\right\|_{\ell^2}^2&\lesssim p^2 \left\|u\right\|_{H^1(\Gamma)}^2 \label{est_utilde_coeff},\\
    \left\|\mathfrak{\tilde{u}}\right\|_{\ell^2}^2&\lesssim \frac{p^4}{h} \left\|u\right\|_{\widetilde{H}^{1/2}(\Gamma)}^2 \label{est_utilde_coeff_honehalf}.
  \end{align}
 Again, \eqref{est_utilde_coeff_l2} is a simple consequence of 
  \eqref{eqn:condition_numbers_l2} and the $L^2$-stability of the Scott-Zhang operator $J_h$. 
 For the bound \eqref{est_utilde_coeff} we calculate, 
  using the equivalence \eqref{eqn:condition_estimates_tref_h1} of the coefficient vector and the $H^1$-norm 
  on the reference triangle, together with the scaling properties of the $H^1$- and $L^2$-norms,
  \begin{align*}
    \left\|\mathfrak{\tilde{u}}\right\|_{\ell^2}^2&\leq\sum_{K \in \mathcal{T}}{ \left\|\mathfrak{\tilde{u}}|_K \right\|_{\ell^2}^2}
    \stackrel{~\eqref{eqn:condition_estimates_tref_h1}, \text{scaling}}{\lesssim} p^2 \sum_{K \in \mathcal{T}}{\left(  h^{-2} \left\|\tilde{u}\right\|_{L^2(K)}^2 + \left|\tilde{u}\right|_{H^1(K)}^2\right)}.
  \end{align*}
  By applying the local $L^2$-interpolation estimate \eqref{scott_zhang_local_l2_approx} and $H^1$-stability \eqref{scott_zhang_local_h1_stab} we get

  \begin{align*}
    \left\|\mathfrak{\tilde{u}}\right\|_{\ell^2}^2&\lesssim p^2 \sum_{K \in \mathcal{T}}{ \left\|u\right\|_{H^1(\omega_K)}^2 }
    \lesssim p^2 \left\|u\right\|_{H^1(\Gamma)}^2, 
  \end{align*}
  where in the last step we used the fact for shape regular meshes each element is contained in at most $M$ different patches, where $M$ depends solely on the shape regularity constant $\gamma$.

  We next prove \eqref{est_utilde_coeff_honehalf}. 
  We apply Proposition~\ref{interpolation_theorem}, (\ref{item:interpolation_theorem:i}) 
  to the map $\ell^2 \to \widetilde{S}^p(\mathcal{T}):\; \mathfrak{u} \mapsto u$, where the space $\widetilde{S}^p(\mathcal{T})$ is
  once equipped with the $L^2$- and once with the $H^1$-norm. By Proposition~\ref{prop:characterization_poly_interp} 
  interpolating between \eqref{est_utilde_l2} and~\eqref{est_utilde_h1} yields 
   $\left\|\mathfrak{\tilde{u}}\right\|_{\ell^2}^2 \lesssim \left( p^6 h^{-2} \,  p^2\right)^{1/2} \left\|u\right\|^2_{\widetilde{H}^{1/2}(\Gamma)} \lesssim p^4 h^{-1} \left\|u\right\|^2_{\widetilde{H}^{1/2}(\Gamma)}$.

{\em 4.~step:}
  The above steps allow us to obtain the $H^1$ and $H^{1/2}$ estimates (\ref{eqn:condition_numbers_h1})---(\ref{eqn:condition_numbers_honehalf}) of the theorem.
  We decompose $u = \tilde{u} + J_h u$ and correspondingly 
  $\mathfrak{u}=\mathfrak{\tilde{u}}+ \mathfrak{J}_h \mathfrak{u}$. Then:

  \begin{align*}
    \left\|\mathfrak{u}\right\|_{\ell^2}^2 
    \lesssim \left\|\tilde {\mathfrak{u}}\right\|_{\ell^2}^2 + \left\|\mathfrak{J}_h \mathfrak{u}\right\|_{\ell^2}^2,
    &\stackrel{(\ref{est_utilde_coeff_honehalf}), (\ref{est_jhu_coeff})}{\lesssim} \frac{p^4}{h} \left\|u\right\|_{\widetilde{H}^{1/2}(\Gamma)}^2 + \frac{1}{h^2} \left\|u\right\|_{L^2(\Gamma)}^2 \lesssim \frac{p^4\,h+1}{h^2} \left\|u\right\|_{\widetilde{H}^{1/2}(\Gamma)}^2, \\
    \left\|u\right\|_{\widetilde{H}^{1/2}(\Gamma)}^2  \lesssim 
    \left\|\tilde u\right\|_{\widetilde{H}^{1/2}(\Gamma)}^2  + \left\|J_h u\right\|_{\widetilde{H}^{1/2}(\Gamma)}^2 
&\stackrel{(\ref{est_utilde_honehalf}), (\ref{est_jhu_honehalf})}{\lesssim} p^2 \,h \left\|\mathfrak{u}\right\|_{\ell^2}^2 + h \left\|\mathfrak{u}\right\|_{\ell^2}^2 \lesssim h\,p^2 \left\|\mathfrak{u}\right\|_{\ell^2}^2.
  \end{align*}
This shows (\ref{eqn:condition_numbers_honehalf}).  The $H^1$ estimate (\ref{eqn:condition_numbers_h1}) 
follows along the same lines: An elementwise inverse estimate gives 
$$
\left\|u\right\|^2_{H^1(\Gamma)} \lesssim \frac{p^4}{h^2} \left\|u\right\|^2_{L^2(\Gamma)} 
\stackrel{~\eqref{eqn:condition_numbers_l2}}{\lesssim} p^4 \|\mathfrak{u}\|^2_{\ell^2},
$$
and the splitting $\mathfrak{u} = \tilde{\mathfrak{u}} + \mathfrak{J}_h\mathfrak{u}$ produces 
\begin{align*}
\left\|\mathfrak{u}\right\|^2_{\ell^2} \lesssim 
\left\|\tilde{\mathfrak{u}}\right\|^2_{\ell^2} + 
\left\|\mathfrak{J}_h \mathfrak{u}\right\|^2_{\ell^2} 
{\stackrel{(\ref{est_utilde_coeff}), (\ref{est_jhu_coeff})}\lesssim}
\left( p^2 + h^{-2} \right) \left\|u\right\|^2_{H^1(\Gamma)}. 
\tag*{\qedhere}
\end{align*}
\end{proof}
\end{theorem}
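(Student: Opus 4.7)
The plan is to lift the reference-element estimates of Lemma~\ref{thm:condition_estimates_tref} to the global space $\widetilde{S}^p(\mathcal{T})$ by a local-to-global argument, using a Scott-Zhang projection $J_h:L^2(\Gamma)\to\widetilde{S}^1(\mathcal{T})$ to separate the piecewise linear component of $u$ from the higher order contribution. The $L^2$ bound \eqref{eqn:condition_numbers_l2} is the easiest: elementwise, $\|\mathfrak{u}|_K\|_{\ell^2}^2$ is equivalent to $\|u\circ F_K\|_{L^2(\widehat{K})}^2$ up to $p$-factors by Lemma~\ref{thm:condition_estimates_tref}, and affine scaling introduces a factor $h^{-2}$ from the surface measure. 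Summing over elements and using that each basis function is supported in only $\mathcal{O}(1)$ elements (by shape regularity) gives both inequalities in \eqref{eqn:condition_numbers_l2}.

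For the $H^1$ and $\widetilde{H}^{1/2}$ estimates, I would split $u=J_hu+\tilde u$ with $\tilde u:=u-J_hu$. The advantage is twofold: $J_hu\in\widetilde{S}^1(\mathcal{T})$ is piecewise linear, so classical FEM inverse inequalities (in particular an $h^{-1/2}$ bound from $L^2$ to $\widetilde{H}^{1/2}$) apply without $p$-dependence; and $\tilde u$ enjoys the local $L^2$ approximation property \eqref{scott_zhang_local_l2_approx}, which compensates the loss of a power of $h$ when passing to $H^1$. Concretely, I would first derive auxiliary bounds
\begin{align*}
\|\mathfrak{J}_h \mathfrak{u}\|_{\ell^2}^2 &\lesssim h^{-2}\|u\|_{L^2(\Gamma)}^2, \qquad \|J_hu\|_{\widetilde{H}^{1/2}(\Gamma)}^2 \lesssim h\,\|\mathfrak{u}\|_{\ell^2}^2,\\
\|\tilde u\|_{L^2(\Gamma)}^2 &\lesssim h^2\|\mathfrak{u}\|_{\ell^2}^2, \qquad \|\tilde u\|_{H^1(\Gamma)}^2 \lesssim p^4\|\mathfrak{u}\|_{\ell^2}^2,
\end{align*}
from the $L^2$ part of the theorem, the reference-element estimate \eqref{eqn:condition_estimates_tref_h1}, and the stability/approximation properties \eqref{scott_zhang_local_l2_approx}--\eqref{scott_zhang_local_h1_stab} of $J_h$. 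Proposition~\ref{interpolation_theorem}(\ref{item:interpolation_theorem:ii}) then supplies the intermediate bound $\|\tilde u\|_{\widetilde{H}^{1/2}(\Gamma)}^2\lesssim p^2 h\,\|\mathfrak{u}\|_{\ell^2}^2$.

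Going in the opposite direction, I would bound $\|\tilde{\mathfrak{u}}\|_{\ell^2}^2$ by $\|u\|$ in the three norms. The $L^2$ and $H^1$ estimates follow by summing the reference-element bounds elementwise and invoking the local approximation \eqref{scott_zhang_local_l2_approx} and local $H^1$-stability \eqref{scott_zhang_local_h1_stab} of $J_h$. For the $\widetilde{H}^{1/2}$ bound the decisive ingredient is Proposition~\ref{prop:characterization_poly_interp}: the interpolation space between $(\widetilde{S}^p(\mathcal{T}),\|\cdot\|_{L^2(\Gamma)})$ and $(\widetilde{S}^p(\mathcal{T}),\|\cdot\|_{\widetilde{H}^1(\Gamma)})$ is exactly $(\widetilde{S}^p(\mathcal{T}),\|\cdot\|_{\widetilde{H}^{1/2}(\Gamma)})$ with $p$-uniform equivalence constants. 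This allows applying Proposition~\ref{interpolation_theorem}(\ref{item:interpolation_theorem:i}) to the coefficient map $\widetilde{S}^p(\mathcal{T})\to\ell^2$, $u\mapsto\tilde{\mathfrak{u}}$, to conclude $\|\tilde{\mathfrak{u}}\|_{\ell^2}^2 \lesssim p^4 h^{-1}\|u\|_{\widetilde{H}^{1/2}(\Gamma)}^2$. Triangle inequality combined with the corresponding estimate for $\mathfrak{J}_h\mathfrak{u}$ (which, being piecewise linear, costs only an extra $h^{-2}$) yields \eqref{eqn:condition_numbers_honehalf}; the same splitting, combined with an elementwise inverse estimate $\|u\|_{H^1(\Gamma)}^2\lesssim p^4 h^{-2}\|u\|_{L^2(\Gamma)}^2$, yields \eqref{eqn:condition_numbers_h1}.

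The main obstacle is the $\widetilde{H}^{1/2}$ direction: a naive elementwise interpolation between $L^2$ and $H^1$ bounds would produce constants with uncontrolled $p$-dependence, because on a mesh the interpolation space of the two ``polynomial-norm'' spaces does not a priori coincide with the polynomial space equipped with the fractional Sobolev norm. Proposition~\ref{prop:characterization_poly_interp} supplies exactly this identification with $p$-independent constants and is therefore the analytic heart of the argument; without it, the factor $p^4$ in the upper bound of \eqref{eqn:condition_numbers_honehalf} could not be recovered.
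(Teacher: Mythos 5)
Your proposal follows essentially the same route as the paper: the elementwise reference-triangle argument for the $L^2$ bounds, the Scott--Zhang splitting $u=J_hu+\tilde u$ with the same auxiliary estimates, Proposition~\ref{interpolation_theorem}(\ref{item:interpolation_theorem:ii}) for $\|\tilde u\|_{\widetilde{H}^{1/2}(\Gamma)}$, and Proposition~\ref{prop:characterization_poly_interp} combined with Proposition~\ref{interpolation_theorem}(\ref{item:interpolation_theorem:i}) applied to the coefficient map to obtain $\|\tilde{\mathfrak{u}}\|_{\ell^2}^2\lesssim p^4h^{-1}\|u\|_{\widetilde{H}^{1/2}(\Gamma)}^2$. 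The argument is correct, and your identification of Proposition~\ref{prop:characterization_poly_interp} as the analytic heart of the $\widetilde{H}^{1/2}$ upper bound matches the paper exactly.
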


\begin{corollary}
\label{thm:condition_numbers_D}
  The spectral condition number of the unpreconditioned Galerkin matrix $\widetilde D^p_h$ can be bounded by
  \begin{align*}
    \kappa \left(\widetilde{D}^p_h\right) &\leq C \left(\frac{p^2}{h} + p^6 \right)
  \end{align*}
  with a constant $C>0$ that depends only on $\Gamma$ and the $\gamma$-shape regularity of $\mathcal{T}$.
  \begin{proof}
    The bilinear form induced by the stabilized hypersingular operator is elliptic and continuous with respect 
    to the $\widetilde{H}^{1/2}$-norm. 
    By applying the estimates \eqref{eqn:condition_numbers_honehalf} to the Rayleigh quotients we get the stated result.
  \end{proof}
\end{corollary}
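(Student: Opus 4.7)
The plan is to reduce the spectral condition number of $\widetilde{D}^p_h$ to a product of two ratios, each of which has already been controlled: the ratio of the bilinear form to the squared $\widetilde{H}^{1/2}(\Gamma)$-norm, and the ratio of the squared $\widetilde{H}^{1/2}(\Gamma)$-norm to the squared Euclidean norm of the coefficient vector. The first is handled by the ellipticity and continuity of the stabilized form recorded below \eqref{eq:def_stabilized_op}, the second is exactly the equivalence \eqref{eqn:condition_numbers_honehalf} established in Theorem~\ref{thm:condition_numbers_h1}.

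Concretely, writing $u=\sum_j \mathfrak{u}_j \varphi_j \in \widetilde{S}^p(\mathcal{T})$, I would first observe that the Rayleigh quotient satisfies
\begin{align*}
\frac{\mathfrak{u}^\top \widetilde{D}^p_h \mathfrak{u}}{\|\mathfrak{u}\|_{\ell^2}^2}
= \frac{\langle \widetilde{D} u, u\rangle_\Gamma}{\|\mathfrak{u}\|_{\ell^2}^2}
\sim \frac{\|u\|_{\widetilde{H}^{1/2}(\Gamma)}^2}{\|\mathfrak{u}\|_{\ell^2}^2},
\end{align*}
with constants depending only on $\Gamma$. Then I would read off the extremal eigenvalues directly from the two sides of \eqref{eqn:condition_numbers_honehalf}: the lower bound $\|\mathfrak{u}\|_{\ell^2}^2 \gtrsim h^{-1}p^{-2}\|u\|_{\widetilde{H}^{1/2}(\Gamma)}^2$ gives $\lambda_{\max}(\widetilde{D}^p_h) \lesssim h\,p^2$, while the upper bound $\|\mathfrak{u}\|_{\ell^2}^2 \lesssim (p^4/h + h^{-2})\|u\|_{\widetilde{H}^{1/2}(\Gamma)}^2$ yields $\lambda_{\min}(\widetilde{D}^p_h) \gtrsim (p^4/h + h^{-2})^{-1}$. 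Taking the ratio produces $\kappa(\widetilde{D}^p_h) \lesssim h\,p^2 \cdot (p^4/h + h^{-2}) = p^6 + p^2/h$, as claimed.

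There is essentially no obstacle: the substantive work has already been carried out in Theorem~\ref{thm:condition_numbers_h1}. The only subtle point worth noting is that one must use the \emph{stabilized} form $\widetilde{D}$ in order to have a norm equivalence $\langle \widetilde{D} u, u\rangle_\Gamma \sim \|u\|_{\widetilde{H}^{1/2}(\Gamma)}^2$ with constants independent of $h$ and $p$ even in the closed-surface case $\Gamma = \partial\Omega$; this is precisely why \eqref{eq:def_stabilized_op} was introduced.
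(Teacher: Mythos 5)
Your argument is correct and coincides with the paper's own proof: both reduce the problem to the norm equivalence $\langle \widetilde{D}u,u\rangle_{\Gamma}\sim\|u\|^2_{\widetilde{H}^{1/2}(\Gamma)}$ for the stabilized form and then read off the extremal eigenvalues of the Rayleigh quotient from the two sides of \eqref{eqn:condition_numbers_honehalf}. The arithmetic $h\,p^2\cdot(p^4/h+h^{-2})=p^6+p^2/h$ is exactly the bound stated in the corollary, so nothing further is needed.
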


\begin{remark}
  In this section we did not consider the effect of diagonal scaling. The numerical results in 
  Section~\ref{sec:numerics} suggest that it improves the $p$-dependence of the condition number significantly.
\hbox{}\hfill\rule{0.8ex}{0.8ex}
\end{remark}

\subsection{Quadrilateral meshes}
  The present paper focuses on meshes consisting of triangles. Nevertheless, in order to put the results 
  of Section~\ref{sec:p_condition} in perspective, we include a short section on quadrilateral meshes. 
  In \cite{maitre_pourquier}, estimates similar to those of Lemma~\ref{thm:condition_estimates_tref}
  have been derived for the case of the Babu\v{s}ka-Szab\'o basis on the reference square $\widehat{S}:=[-1,1]^2$. 
  We give a brief summary of the definitions and results.
  \begin{definition}[Babu\v{s}ka-Szab\'o basis]
    \label{def:babuska_szabo_basis}
\begin{enumerate}[(i)]
\item
    On the reference interval $\widehat{I}:=[-1,1]$,  the basis functions are based on the integrated 
    Legendre polynomials $L_j$ as defined in \eqref{eq:def_legendre_poly}:
    \begin{align*}
      \varphi_0(x)&:=\frac{1}{2} (1-x), & \varphi_1(x)&:=\frac{1}{2}(1+x), &  
      \varphi_j(x)&:=\frac{1}{\left\|\ell_{j-1}\right\|_{L^2(I)}} L_{j}(x) \quad \forall \, 2 \leq j \leq p.
    \end{align*}
\item
    On the reference square $\widehat S$, the basis of the ``tensor product space'' ${\mathcal Q}^p(\widehat S)$ 
    is given by 
     the tensor product of the 1D basis functions: ${\{ \varphi_i \otimes \varphi_j : 0 \leq i,j \leq p \}}$. 
\end{enumerate}
  \end{definition}

  For this basis, the following estimates hold:
  \begin{proposition}[{\cite[Theorem 1]{maitre_pourquier}}, {\cite[Theorem 4.1]{hu_guo_katz_condition_bounds_p}}]
    \label{prop:matire_pourquier}
    Let $u \in \mathcal{Q}^p(\widehat{S})$ and let $\mathfrak{u}$ denote its coefficient vector with 
    respect to the basis of Definition~\ref{def:babuska_szabo_basis}. Then the following estimates hold:
    \begin{align}
      \left\|u\right\|_{L^2(\widehat{S})}^2 &\lesssim \left\|\mathfrak{u}\right\|_{\ell^2}^2 \lesssim p^8 \left\|u\right\|_{L^2(\widehat{S})}^2, & \quad 
      \left\|u\right\|_{H^1(\widehat{S})}^2 &\lesssim \left\|\mathfrak{u}\right\|_{\ell^2}^2 \lesssim p^4 \left\|u\right\|_{H^{1}(\widehat{S})}^2.
    \end{align}
  \end{proposition}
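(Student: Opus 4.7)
The plan is to reduce the proposition to one-dimensional estimates and then exploit the tensor-product structure of both the polynomial space $\mathcal{Q}^p(\widehat S) = \mathcal{P}^p(\widehat I) \otimes \mathcal{P}^p(\widehat I)$ and of the Babu\v{s}ka-Szab\'o basis itself. First I would establish the one-dimensional analogs: for $v \in \mathcal{P}^p(\widehat I)$ with coefficient vector $\mathfrak{v}$ in the 1D basis of Definition~\ref{def:babuska_szabo_basis},
\begin{align*}
\|v\|_{L^2(\widehat I)}^2 &\lesssim \|\mathfrak{v}\|_{\ell^2}^2 \lesssim p^4 \|v\|_{L^2(\widehat I)}^2, \\
\|v\|_{H^1(\widehat I)}^2 &\lesssim \|\mathfrak{v}\|_{\ell^2}^2 \lesssim p^2 \|v\|_{H^1(\widehat I)}^2.
\end{align*}

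The structural observation driving these bounds is that the chosen normalization $\varphi_j = L_j/\|\ell_{j-1}\|_{L^2(\widehat I)}$ for $2 \le j \le p$ yields $\varphi_j' = \ell_{j-1}/\|\ell_{j-1}\|_{L^2(\widehat I)}$, so the derivatives form an $L^2$-orthonormal system. After a vertex/bubble splitting $v = v_V + v_I$, the $H^1$-seminorm of the bubble part then equals $\|\mathfrak{v}_I\|_{\ell^2}$ exactly, which delivers the $H^1$ estimate up to the vertex contributions (handled by a standard inverse inequality as in Lemma~\ref{condition_lower_bound}). For the $L^2$ direction, the mass matrix of the integrated Legendre polynomials is pentadiagonal with entries $|M_{ij}| \sim 1/(ij)$, the same fact already exploited in the proof of Lemma~\ref{condition_upper_bound_l2}; a Gershgorin-type or direct Cauchy-Schwarz argument applied to $M$ and its inverse then gives $\lambda_{\min}(M^{1D}) \gtrsim p^{-4}$ and $\lambda_{\max}(M^{1D}) \lesssim 1$.

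Second, I would lift the 1D estimates to 2D via the tensor-product structure. Because the basis of $\mathcal{Q}^p(\widehat S)$ is a pure tensor product, the 2D mass and full $H^1$-mass matrices decompose as
\begin{align*}
M^{2D} &= M^{1D} \otimes M^{1D}, \\
A^{2D} &= S^{1D} \otimes M^{1D} + M^{1D} \otimes S^{1D} + M^{1D} \otimes M^{1D},
\end{align*}
with $S^{1D}$ the 1D stiffness matrix. The spectra of these matrices are then simple polynomial expressions in the eigenvalues of $M^{1D}$ and $S^{1D}$, which are jointly diagonalizable on the bubble block. This immediately gives $\lambda_{\min}(M^{2D}) \sim \lambda_{\min}(M^{1D})^2 \gtrsim p^{-8}$ and an analogous $p^{-4}$ bound for $A^{2D}$, while the upper bounds follow from the uniform boundedness of the entries.

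The main obstacle is the sharp tracking of constants in the 1D mass-matrix inversion: obtaining $\lambda_{\min}(M^{1D}) \sim p^{-4}$ rather than a weaker estimate requires quantitative control of the pentadiagonal off-diagonal decay, and it is precisely this computation that is carried out in \cite{maitre_pourquier, hu_guo_katz_condition_bounds_p}. The 2D lifting step is purely algebraic and essentially automatic once the 1D bounds are in hand.
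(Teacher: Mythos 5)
First, note that the paper itself gives no proof of Proposition~\ref{prop:matire_pourquier}: it is quoted from the cited references, and the Remark immediately following it records precisely the division of labour there --- \cite{maitre_pourquier} treats only the interior degrees of freedom ($u|_{\partial\widehat S}=0$), and \cite{hu_guo_katz_condition_bounds_p} removes that restriction. Your sketch reproduces the natural mechanism behind the interior-block result, but it understates where the actual difficulty sits, and as written the argument does not deliver the stated exponents for the full basis.

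Concretely, two steps are problematic. (i) The joint diagonalization you invoke is only available on the bubble block: there $S^{1D}_I=I$ because the normalized derivatives $\varphi_j'=\ell_{j-1}/\|\ell_{j-1}\|_{L^2}$ are orthonormal, so $A^{2D}_{II}=I\otimes M_I+M_I\otimes I+M_I\otimes M_I$ is a commuting family and the spectrum is $\{\mu_i+\mu_j+\mu_i\mu_j\}$, which indeed gives $\lambda_{\min}\gtrsim p^{-4}$. On the full space, however, $S^{1D}$ and $M^{1D}$ do not commute (the hat functions break the orthogonality), and the coupling between vertex, edge and interior blocks is exactly the content of \cite[Thm.~4.1]{hu_guo_katz_condition_bounds_p}; your proposal defers the vertex part to ``a standard inverse inequality'' but says nothing about how the edge blocks and the cross-coupling are absorbed without losing powers of $p$ --- and a splitting argument in the style of Lemma~\ref{condition_lower_bound} does not transfer, because unlike the triangle basis the interior functions here are far from $L^2$-orthonormal ($M_I\otimes M_I$ has condition number $\sim p^8$). (ii) Even granting the 1D two-sided bounds, the $H^1$ lower bound does not tensorize ``automatically'': the 2D $H^1$ Gram matrix $S\otimes M+M\otimes S+M\otimes M$ is missing the mixed-derivative term $S\otimes S$ that would make it the tensor product of the 1D $H^1$ forms, so the naive chain $I\otimes I\lesssim p^2\,(S+M)\otimes I\lesssim p^2\,(S+M)\otimes p^4 M$ produces $p^6$, not $p^4$. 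The $L^2$ bounds and the $H^1$ upper bound do lift cleanly by Kronecker eigenvalue identities and monotonicity, but the sharp $p^4$ in the $H^1$ lower bound is not ``purely algebraic.'' A smaller quibble: Gershgorin cannot yield $\lambda_{\min}(M^{1D})\gtrsim p^{-4}$, since the diagonal entries of the (normalized) integrated-Legendre mass matrix decay like $j^{-2}$ with off-diagonal entries of comparable size, so the relevant discs may contain the origin; the explicit three-term representation $L_j=(\ell_j-\ell_{j-2})/(2j-1)$ is really needed, which is the computation carried out in the references. In short: your outline identifies the right ingredients, but the passage from the interior block to the full basis --- the very point the paper's own Remark flags --- is left open, and the tensorization step as stated would not recover the claimed $p^4$.
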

  
  \begin{remark}
    In \cite[Thm.~{1}]{maitre_pourquier}, the estimates were only shown for the inner degrees of freedom, i.e., 
    if $u|_{\partial\widehat{S}} = 0$.  This restriction is removed in \cite[Thm.~{4.1}]{hu_guo_katz_condition_bounds_p}.
\hbox{}\hfill\rule{0.8ex}{0.8ex}
  \end{remark}

  \begin{theorem}
\label{thm:L2H1-quads}
    Let $\mathcal{T}$ be a quasi-uniform, shape-regular affine mesh of quadrilaterals of size $h$, 
and let $F_K: \widehat{S} \to K$ be the affine element map for $K \in \mathcal{T}$. 
    Let $u \in \widetilde{\mathcal{Q}}^p(\mathcal{T}):=\left\{ u \in \widetilde{H}^{1/2}(\Gamma): u|_K \circ F_K \in \mathcal{Q}^p(\widehat{S}) \;\; \forall K \in \mathcal{T} \right\}$,
    and let $\mathfrak{u}$ denote its coefficient vector with respect to the basis of Definition~\ref{def:babuska_szabo_basis}.
    Then there exist constants $c_0,c_1,C_0,C_1 > 0$ that depend only on $\Gamma$ and the $\gamma$-shape regularity of $\mathcal{T}$ such that:
    \begin{align*}
      c_0 h^{-2} \left\|u\right\|_{L^2(\Gamma)}^2 &\leq \left\|\mathfrak{u}\right\|_{\ell^2}^2 \leq C_0 h^{-2} p^8 \left\|u\right\|_{L^2(\Gamma)}^2, &
      c_1 \left\|u\right\|_{H^1(\Gamma)}^2 &\leq \left\|\mathfrak{u}\right\|_{\ell^2}^2 \leq C_1 \left(p^4 + h^{-2}\right) \left\|u\right\|_{H^1(\Gamma)}^2.
    \end{align*}
    \begin{proof}
      The proof is completely analogous to that of Theorem~\ref{thm:condition_numbers_h1}. 
      The only additional ingredient to 
      Proposition~\ref{prop:matire_pourquier} is an operator $J_h: L^2 \to \widetilde{\mathcal{Q}}^{1}(\mathcal{T})$ 
      that is bounded with respect to the $L^2$ and the $H^1$-norm, reproduces homogeneous 
      Dirichlet boundary conditions for the case of open surfaces, and has the approximation property
            $\displaystyle  \left\|u-J_h u\right\|_{L^2(\Gamma)} \lesssim h \left|u\right|_{H^1(\Gamma)}.$
            Such an operator was proposed, e.g., in \cite{bernardi_girault_regularization_operator}. 
      The important estimates that need to be shown are
      (we again write $\widetilde{u}:=u-J_h u$):
      \begin{align*}
        \left\|u\right\|_{L^2(\Gamma)}^2 &\lesssim h^2 \left\|\mathfrak{u}\right\|_{\ell^2}^2, &
        \left\|u\right\|_{H^{1}(\Gamma)}^2&\lesssim \left\| \mathfrak{u}\right\|_{\ell^2}^2 \\
        \left\|\mathfrak{u}\right\|_{\ell^2}^2 &\lesssim \frac{p^8}{h^2} \left\|u\right\|_{L^2(\Gamma)}^2, &
        \left\|\mathfrak{u}\right\|_{\ell^2}^2 &\lesssim \left\|\mathfrak{\widetilde{u}}\right\|_{\ell^2}^2 + \left\|\mathfrak{J_h u}\right\|_{\ell^2}^2 
        \lesssim{p^4} \left\|u\right\|_{H^1(\Gamma)}^2 + h^{-2} \left\|u\right\|_{L^2(\Gamma)}^2. \qedhere
      \end{align*}      
    \end{proof}
  \end{theorem}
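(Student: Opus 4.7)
The plan is to follow exactly the roadmap of Theorem~\ref{thm:condition_numbers_h1}, replacing the triangular reference-element bounds of Lemma~\ref{thm:condition_estimates_tref} by their quadrilateral counterparts in Proposition~\ref{prop:matire_pourquier}, and the triangular Scott--Zhang operator by the quadrilateral regularization operator of \cite{bernardi_girault_regularization_operator}. All four bounds follow by elementwise scaling arguments, with the only non-trivial step being the upper $H^1$ estimate, which requires splitting off a low-order part.

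First, for the $L^2$ estimates I would proceed purely elementwise. The 2D scalings $\|u\|_{L^2(K)}^2 \sim h^2 \|\widehat u\|_{L^2(\widehat S)}^2$ and Proposition~\ref{prop:matire_pourquier} together give, on every $K\in\mathcal T$,
\begin{align*}
h^{-2}\|u\|_{L^2(K)}^2 \;\lesssim\; \|\mathfrak u|_K\|_{\ell^2}^2 \;\lesssim\; p^8\,h^{-2}\|u\|_{L^2(K)}^2.
\end{align*}
Summing over $K\in\mathcal T$ and using shape regularity (each global coefficient sits in at most a uniformly bounded number of element patches) yields the claimed $L^2$ bounds. The lower $H^1$ bound is equally direct: combining $\|u\|_{L^2(K)}^2 \sim h^2\|\widehat u\|_{L^2(\widehat S)}^2$ with $\|\nabla u\|_{L^2(K)}^2 \sim \|\nabla\widehat u\|_{L^2(\widehat S)}^2$ and the lower $H^1$ bound of Proposition~\ref{prop:matire_pourquier} gives $\|u\|_{H^1(K)}^2 \lesssim (1+h^2)\|\mathfrak u|_K\|_{\ell^2}^2$, which after summation produces $\|u\|_{H^1(\Gamma)}^2 \lesssim \|\mathfrak u\|_{\ell^2}^2$.

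The remaining upper $H^1$ estimate is where I expect the technical work, and here I would mimic steps~3--4 of the proof of Theorem~\ref{thm:condition_numbers_h1}. Let $J_h\colon L^2(\Gamma)\to \widetilde{\mathcal Q}^1(\mathcal T)$ be the quadrilateral regularization operator of \cite{bernardi_girault_regularization_operator}, which is $L^2$- and $H^1$-stable, preserves the homogeneous Dirichlet condition on open screens, and satisfies the local first-order estimates $\|(1-J_h)v\|_{L^2(K)} \lesssim h_K\|\nabla v\|_{L^2(\omega_K)}$ and $\|\nabla(1-J_h)v\|_{L^2(K)} \lesssim \|\nabla v\|_{L^2(\omega_K)}$. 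Splitting $u = \widetilde u + J_h u$ with $\widetilde u := u - J_h u$, the elementwise upper $H^1$-bound of Proposition~\ref{prop:matire_pourquier} together with the $L^2$/$H^1$-scalings gives
\begin{align*}
\|\widetilde{\mathfrak u}\|_{\ell^2}^2 \;\leq\; \sum_{K\in\mathcal T}\|\widetilde{\mathfrak u}|_K\|_{\ell^2}^2
\;\lesssim\; p^4\sum_{K\in\mathcal T}\bigl(h^{-2}\|\widetilde u\|_{L^2(K)}^2 + \|\nabla\widetilde u\|_{L^2(K)}^2\bigr),
\end{align*}
and substitution of the two local estimates for $\widetilde u$ produces $\|\widetilde{\mathfrak u}\|_{\ell^2}^2 \lesssim p^4\|u\|_{H^1(\Gamma)}^2$, using finite overlap of the patches $\omega_K$.

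Finally, I would estimate the low-order contribution via $\|\mathfrak J_h\mathfrak u\|_{\ell^2}^2 \lesssim h^{-2}\|J_h u\|_{L^2(\Gamma)}^2 \lesssim h^{-2}\|u\|_{L^2(\Gamma)}^2$, which uses only the already-established lower $L^2$ bound applied on $\widetilde{\mathcal Q}^1(\mathcal T)$ and the $L^2$-stability of $J_h$. A triangle inequality $\|\mathfrak u\|_{\ell^2}^2 \lesssim \|\widetilde{\mathfrak u}\|_{\ell^2}^2 + \|\mathfrak J_h\mathfrak u\|_{\ell^2}^2$ then closes the argument. The hard part, as the statement already hints, is simply recalling and verifying that a quadrilateral regularization operator with the required local $L^2$/$H^1$ properties and the correct boundary behaviour on open screens is available; once that is invoked from \cite{bernardi_girault_regularization_operator}, the proof is a verbatim transcription of the triangular case.
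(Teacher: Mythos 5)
Your proposal is correct and follows exactly the route the paper intends: the elementwise scaling of Proposition~\ref{prop:matire_pourquier} for the $L^2$ bounds and the lower $H^1$ bound, and the splitting $u=\widetilde u+J_hu$ with the operator of \cite{bernardi_girault_regularization_operator} for the upper $H^1$ bound, reproducing precisely the four ``important estimates'' the paper lists. The only quibble is terminological: the bound $\|\mathfrak{J}_h\mathfrak{u}\|_{\ell^2}^2\lesssim h^{-2}\|J_hu\|_{L^2(\Gamma)}^2$ uses the \emph{upper} inequality of the $L^2$ chain specialized to $p=1$, not the lower one.
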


  \begin{remark} 
\label{rem:conditioning-babuska-szabo-quads}
   In the case of triangular meshes, Proposition~\ref{prop:characterization_poly_interp} allowed us to infer 
   $\widetilde{H}^{1/2}$-condition number estimates in Corollary~\ref{thm:condition_numbers_D} 
   by interpolating the discrete norm 
   equivalences in $L^2$ and $H^1$ of Theorem~\ref{thm:condition_numbers_h1}. For meshes consisting of 
   quadrilaterals, the result corresponding to 
   Proposition~\ref{prop:characterization_poly_interp} is currently not available in the literature. If we conjecture 
                   $\left[ \left( \widetilde{\mathcal{Q}}^{p}(\mathcal{T}), \left\|\cdot\right\|_{L^2(\Gamma)}\right), 
      \left( \widetilde{\mathcal{Q}}^{p}(\mathcal{T}), \left\|\cdot\right\|_{H^1(\Gamma)}\right) \right]_{1/2}     
    =\left( \widetilde{\mathcal{Q}}^{p}(\mathcal{T}), \left\|\cdot\right\|_{\widetilde{H}^{1/2}(\Gamma)}\right)$ 
     with equivalent norms, then Theorem~\ref{thm:L2H1-quads} implies the following estimates: 
    \begin{align*}
      h^{-1} \left\|u\right\|^2_{\widetilde{H}^{1/2}(\Gamma)} &\lesssim \left\|\mathfrak{u}\right\|^2_{\ell^2} 
      \lesssim \left(h^{-1} p^6 + h^{-2}\right) \left\|u\right\|_{\widetilde{H}^{1/2}(\Gamma)}^2, \\
      \kappa\left(\widetilde{D}_h^p\right) &\lesssim h^{-1} + p^6.
\tag*{\hbox{}\hfill\rule{0.8ex}{0.8ex}}
    \end{align*}
      \end{remark}

  \begin{remark}
    \cite{maitre_pourquier} also analyzes the influence of diagonal preconditioning and shows that the condition number is improved by a factor of two in the exponents of $p$. Although we did not make any 
 theoretical investigations in this direction for the $H^{1/2}$-case, our numerical experiments 
    in Examples~\ref{example_fichera_prec} and \ref{example_screen_prec} for triangular meshes show 
    that diagonal scaling improves the $p$-dependence of the condition number from 
    $\mathcal{O}(p^{5.5})$ to $\mathcal{O}(p^{2.5})$. 
\hbox{}\hfill\rule{0.8ex}{0.8ex}
  \end{remark}
  
  \begin{remark} 
The Babu\v{s}ka-Szab\'o basis of Definition~\ref{def:babuska_szabo_basis} is not the only one used on 
quadrilaterals or hexahedra. An important representative of other bases are the Lagrange interpolation polynomials
associated with the Gau{\ss}-Lobatto points.
        This basis has the better $\mathcal{O}(p^3)$ conditioning for the stiffness matrix
    and $\mathcal{O}(p^2)$ for the mass matrix (see \cite{melenk_gl},\cite[Sect.~6]{maitre_pourquier}).
    Using the same arguments as in the proof of Theorem~\ref{thm:L2H1-quads}, we get for the global $H^1$-problem that the condition number behaves 
    like $\mathcal{O}(p\,h^{-2} + p^3)$. If the conjecture of Remark~\ref{rem:conditioning-babuska-szabo-quads} is valid, 
then we obtain for this basis for the hypersingular integral operator the condition number 
estimate $\kappa(\widetilde D_h^p) \lesssim p^{5/2} + p^{-1/2} h^{-1}$.
(See the Appendix for details.)
\hbox{}\hfill\rule{0.8ex}{0.8ex}
  \end{remark}

\section{$hp$-preconditioning}
\label{sec:main_results}
\subsection{Abstract additive Schwarz methods}
\label{section_abstract_asm}
Additive Schwarz preconditioners are based on decomposing a vector space $\mathds{V}$ into smaller subspaces $\mathds{V}_i$, $i=0,\dots, J$, on
which a local problem is solved. We recall some of the basic definitions and important results.
Details can be found in \cite[chapter 2]{toselli_widlund}.

Let $a(\cdot,\cdot): \mathds{V} \times \mathds{V} \to \mathbb{R}$ be a symmetric, positive definite bilinear form on the finite dimensional vector space $\mathds{V}$.
For a given $f \in \mathds{V}'$ consider the problem of finding $u \in \mathds{V}$ such that
\begin{align*}
  a(u,v)&=f(v) \quad  \forall v \in \mathds{V}.
\end{align*} 
We will write $A$ for the corresponding Galerkin matrix.

Let $\mathds{V}_i \subset \mathds{V},\; i=0,\dots, J$, be finite dimensional vector spaces with corresponding
prolongation operators $R_i^T: \mathds{V}_i \to \mathds{V}$.
We will commit a slight abuse of notation and also denote the matrix representation of the operator by $R^T_i$, 
and $R_i$ is its transposed matrix.
We also assume that $\mathds{V}$ permits a (in general not direct) decomposition into
\begin{align*}
  \mathds{V}&= R_0^T \mathds{V}_0 + \sum_{i=1}^{J}{R_i^T \mathds{V}_i}.
\end{align*}
We assume that for each subspace $\mathds{V}_i$ a  symmetric and positive definite bilinear form
\begin{align*}
\widetilde{a}_i( \cdot, \cdot):\mathds{V}_i \times \mathds{V}_i \to \mathbb{R}, \quad i=0, \dots, J,
\end{align*}
is given.
We write $\widetilde{A}_i$ for the matrix representation of $\widetilde{a}_i(\cdot,\cdot)$.
Sometimes these bilinear forms are referred to as the ``local solvers''; 
in the simplest case of ``exact local solvers'' they are just restrictions of $a(\cdot,\cdot)$, i.e.,
$ \widetilde{a}_i(u_i,v_i):=a\left(R_i^T u_i, R_i^T v_i\right)$ for all $u_i,v_i \in \mathds{V}_i.$
Then, the corresponding additive Schwarz preconditioner is given by
\begin{align*}
  B^{-1}:=\sum_{i=0}^{J}{R_i^T \widetilde{A}^{-1}_i R_i }.
\end{align*}

The following proposition allows us to bound the condition number
of the preconditioned system $B^{-1}A$.
The first part is often referred to as the Lemma of Lions
(see \cite{zhang_multilevel_schwarz,lions_schwarz_alternating_1,nepomnyaschik_asm}).
\begin{proposition}
\label{thm_asm_estimate}
  \begin{enumerate}[(a)]
    \item Assume that there exists a constant $C_0 >0 $ such that every $u \in \mathds{V}$ admits a decomposition
       $ u = \sum_{i=0}^{J}{ R_i^T \, u_i}$ with $u_i \in \mathds{V}_i$
      such that
      \begin{align*}
       \sum_{i=0}^{J}{\widetilde{a}_i(u_i,u_i)}\leq C_0 \; a(u,u).
      \end{align*}
      Then, the minimal eigenvalue $\lambda_{min}(B^{-1}A)$ of $B^{-1} A$ satisfies 
      $  \lambda_{min}\left(B^{-1} A\right) \geq C_0^{-1}.$
    \item Assume that there exists $C_1 >0$ such that for every decomposition
      $u=\sum_{i=0}^{J}{R_i^T \,v_i}$ with $v_i \in \mathds{V}_i$ the following estimate holds:
      \begin{align*}
        a(u,u) \leq C_1 \sum_{i=0}^{J}{\widetilde{a}_i(v_i,v_i)}.
      \end{align*}
      Then, the maximal eigenvalue $\lambda_{max}(B^{-1} A)$ of $B^{-1} A$ satisfies 
        $\lambda_{max}\left(B^{-1} A\right) \leq C_1$.

      \item { These  two estimates together give an estimate for the condition number of the preconditioned linear system: }
\begin{align*}
  \kappa \left(B^{-1} A\right)&:=\frac{\lambda_{max}}{\lambda_{min}}\leq C_0 C_1. \tag*{\qed}
\end{align*}
\end{enumerate}

\end{proposition}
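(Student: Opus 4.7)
The plan is to introduce, for each index $i=0,\ldots,J$, the ``local projection'' $P_i:\mathds{V}\to\mathds{V}$ defined by $P_i u := R_i^T w_i$, where $w_i\in\mathds{V}_i$ is the unique solution of $\widetilde{a}_i(w_i,v_i)=a(u,R_i^T v_i)$ for all $v_i\in\mathds{V}_i$. Unpacking this definition at the coordinate level (using that $\mathfrak{u}\mapsto R_i A \mathfrak{u}$ is precisely the coefficient vector, in the dual basis of $\mathds{V}_i$, of the functional $v_i\mapsto a(u,R_i^T v_i)$) shows that $P:=\sum_{i=0}^J P_i$ has matrix representation $B^{-1}A$ in the chosen basis of $\mathds{V}$. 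Since each $P_i$ is $a$-self-adjoint and $a$-positive semidefinite, so is $P$, and its eigenvalues are real and equal to those of $B^{-1}A$. Bounding $\kappa(B^{-1}A)$ therefore reduces to two-sided bounds on the Rayleigh quotient $a(Pu,u)/a(u,u)$ for $u\in\mathds{V}\setminus\{0\}$.

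For part (a), I would fix $u\in\mathds{V}$, invoke the hypothesis to obtain a decomposition $u=\sum_i R_i^T u_i$ with $\sum_i\widetilde{a}_i(u_i,u_i)\le C_0\,a(u,u)$, and compute
$$
a(u,u)=\sum_i a(R_i^T u_i,u)=\sum_i \widetilde{a}_i(w_i,u_i)\le\Bigl(\sum_i\widetilde{a}_i(w_i,w_i)\Bigr)^{1/2}\Bigl(\sum_i\widetilde{a}_i(u_i,u_i)\Bigr)^{1/2},
$$
using the defining equation of $w_i$ (with test $u_i$) and then two Cauchy--Schwarz estimates. The key identity $\sum_i\widetilde{a}_i(w_i,w_i)=a(Pu,u)$, obtained by taking $v_i=w_i$ in the defining equation and summing, together with the stability hypothesis then yields $a(u,u)\le C_0\,a(Pu,u)$, so $\lambda_{\min}(B^{-1}A)\ge C_0^{-1}$. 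For part (b), I would apply the hypothesis to the canonical decomposition $Pu=\sum_i R_i^T w_i$ to get $a(Pu,Pu)\le C_1\sum_i\widetilde{a}_i(w_i,w_i)=C_1\,a(Pu,u)$; an $a$-Cauchy--Schwarz then gives $a(Pu,u)^2\le a(Pu,Pu)\,a(u,u)\le C_1\,a(Pu,u)\,a(u,u)$, hence $a(Pu,u)\le C_1\,a(u,u)$ and $\lambda_{\max}(B^{-1}A)\le C_1$. Part (c) is then immediate.

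The only step requiring genuine care is the identification of $\sum_i P_i$ with $B^{-1}A$ at the matrix level and the associated $a$-self-adjointness; once this operator-theoretic setup is in place, the remainder is a clean application of Cauchy--Schwarz together with the two hypotheses. A small but essential observation for the upper bound is that the assumption in (b), being uniform over \emph{all} decompositions, can be applied to the canonical one generated by $P$ itself -- this is precisely what turns a statement about arbitrary decompositions into a bound on the Rayleigh quotient.
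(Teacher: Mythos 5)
Your proof is correct and is precisely the classical argument for the abstract additive Schwarz theory (Lions' lemma for the lower bound, the projection-operator/Cauchy--Schwarz argument for the upper bound); the paper itself omits the proof and refers to \cite[Chapter~2]{toselli_widlund}, where this same reasoning appears. In particular, your identification of $\sum_i P_i$ with $B^{-1}A$, the identity $\sum_i\widetilde{a}_i(w_i,w_i)=a(Pu,u)$, and the application of hypothesis (b) to the canonical decomposition of $Pu$ are all exactly as in the standard treatment.
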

 \subsection{An $hp$-stable preconditioner}\label{sec:hpprecond}
In order to define an additive Schwarz preconditioner, we decompose the boundary element space $\mathds{V}:=\widetilde{S}^{p}(\mathcal{T})$ into
several overlapping subspaces.
We define $\mathds{V}_h^1:= \widetilde{S}^{1}(\mathcal{T})$ as the space of globally continuous and piecewise linear functions on $\mathcal{T}$ that 
vanish on $\partial \Gamma$ and denote the corresponding canonical embedding operator by $R_{h}^T: \mathds{V}_h^1 \to \mathds{V}$.
We also define for each vertex $\boldsymbol{z} \in {\mathcal{V}}$ the local space
\begin{align*}
  \mathds{V}_{\boldsymbol{z}}^p:=\{ u \in \widetilde{S}^{p}(\mathcal{T}) | \operatorname{supp}(u) 
\subset \overline{\omega_{\boldsymbol{z}}} \}
\end{align*}
and denote the canonical embedding operators by $R_{\boldsymbol{z}}^T: \mathds{V}_{\boldsymbol{z}}^p \to \mathds{V}$.
The space decomposition then reads
\begin{align}
\label{abstract_space_splitting}
\mathds{V} = \mathds{V}_h^1 + \sum_{\boldsymbol{z} \in {\mathcal{V}} } {\mathds{V}_{\boldsymbol{z}}^p}.
\end{align}
We will denote the restriction of the Galerkin matrix $\widetilde{D}_h^p$ to the 
subspaces $\mathds{V}_h^1$ and $\mathds{V}_{\boldsymbol{z}}^p$ as
$\widetilde{D}_h^1$ and $\widetilde{D}^p_{h,\boldsymbol{z}}$, respectively.

\begin{lemma}
\label{lemma:ppreconditioner_stable_splitting}
  There exist constants $c_1,c_2 > 0 $, which depend only on $\Gamma$ and the $\gamma$-shape regularity of $\mathcal{T}$,  such that the following holds:
\begin{enumerate}[(a)]
\item
  For every $u \in  \widetilde{S}^p(\mathcal{T})$ there exists a decomposition $u= u_1 + \sum_{\boldsymbol{z} \in {\mathcal{V}}}{u_{\boldsymbol{z}}}$ with
  $u_1 \in \mathds{V}_h^1$ and $u_{\boldsymbol{z}} \in \mathds{V}^p_{\boldsymbol{z}}$ and
  \begin{align*}
     \left\|u_1\right\|_{\widetilde{H}^{1/2}(\Gamma)}^2 + \sum_{\boldsymbol{z} \in {\mathcal{V}}}{\left\|u_{\boldsymbol{z}}\right\|_{\widetilde{H}^{1/2}(\Gamma)}^2} &\leq c_1 \; \left\|u\right\|_{\widetilde{H}^{1/2}(\Gamma)}^2.
  \end{align*}
\item Any decomposition $u=v_1 + \sum_{\boldsymbol{z} \in {\mathcal{V}}}{v_{\boldsymbol{z}}}$ with $v_1 \in \mathds{V}^1_h$ and $v_{\boldsymbol{z}} \in \mathds{V}^p_{\boldsymbol{z}}$ satisfies
  \begin{align*}
    \left\|u\right\|_{\widetilde{H}^{1/2}(\Gamma)}^2 &\leq c_2 \left(\left\|v_1\right\|_{\widetilde{H}^{1/2}(\Gamma)}^2 + \sum_{\boldsymbol{z} \in {\mathcal{V}}}{ \left\|v_{\boldsymbol{z}}\right\|_{\widetilde{H}^{1/2}(\Gamma)}^2} \right).
  \end{align*}
\end{enumerate}
\begin{proof}
  The first estimate is the assertion of Proposition~\ref{prop:stable_space_decomposition}, (\ref{item:prop:stable_space_decomposition-iv}).  
  The second estimate can be shown by a so-called coloring argument, along the same lines as in \cite[Lemma 2]{heuer_asm_indef_hyp}. It is based on 
  the following estimate (see \cite[Lemma 4.1.49]{book_sauter_schwab} or \cite[Lemma 3.2]{petersdorff_rwp_elasticity}):
  Let $w_j$, $j=1,\dots, n$ be functions in $\widetilde{H}^{s}(\Gamma)$ for $s \geq 0$ with pairwise disjoint support. Then it holds
  \begin{align}
\label{eq:SS}
    \left\|\sum_{i=1}^{n}{w_i}\right\|_{\widetilde{H}^{s}(\Gamma)}^2 &\leq  C \sum_{i=1}^{n}{\left\|w_i\right\|^2_{\widetilde{H}^{s}(\Gamma)}},
  \end{align}
  where $C>0$ depends only on $\Gamma$.
  By $\gamma$-shape regularity, the number of elements in any vertex patch, and therefore also the number of vertices in a patch, is uniformly bounded by some constant
  $N_{c}$ which depends solely on $\gamma$.
  Thus, we can divide the vertices into sets $J_1,\dots,J_{N_{c}}$ such that
  $\bigcup_{i=1}^{N_c}{ J_i} = {\mathcal{V}}$ and $ \left|\omega_{\boldsymbol{z}} \cap \omega_{\boldsymbol{z'}} \right| = 0$ for all $\boldsymbol{z},\boldsymbol{z'}$ in the same index set $J_i$.
  Repeated application of the triangle inequality together with \eqref{eq:SS} then gives:
\begin{align*}
  \left\|u\right\|_{\widetilde{H}^{1/2}(\Gamma)}^2 \leq 2\left\|v_1\right\|_{\widetilde{H}^{1/2}(\Gamma)}^2 + 2 \left\|\sum_{\boldsymbol{z} \in {\mathcal{V}}}{ v_{\boldsymbol{z}}}\right\|_{\widetilde{H}^{1/2}(\Gamma)}^2 
  &\leq 2 \left\|v_1\right\|_{\widetilde{H}^{1/2}(\Gamma)}^2 + 2 N_c \sum_{i=1}^{N_c}{ \left\|\sum_{\boldsymbol{z} \in J_i}{v_{\boldsymbol{z}}}\right\|_{\widetilde{H}^{1/2}(\Gamma)}^2 }  \\
  &\leq 2 \left\|v_1\right\|_{\widetilde{H}^{1/2}(\Gamma)}^2 + 2\, N_c \,C  \sum_{\boldsymbol{z} \in {\mathcal{V}}}{ \left\|v_{\boldsymbol{z}}\right\|_{\widetilde{H}^{1/2}(\Gamma)}^2 }.
\qedhere
\end{align*}
\end{proof}
\end{lemma}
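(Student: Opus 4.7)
\medskip

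\noindent\textbf{Proof plan.}
The two assertions are of quite different character: (a) is essentially a restatement of the stable localization result recalled in Proposition~\ref{prop:stable_space_decomposition}, while (b) is a coloring/finite-overlap argument in the spirit of \cite[Lemma~2]{heuer_asm_indef_hyp}.

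\emph{Part (a).} The plan is to apply Proposition~\ref{prop:stable_space_decomposition} directly to $u \in \widetilde{S}^p(\mathcal{T})$. This produces a decomposition $u = u_1 + \sum_{\boldsymbol{z} \in {\mathcal{V}}} U_{\boldsymbol{z}}$ with $u_1 \in \widetilde{S}^1(\mathcal{T}) = \mathds{V}_h^1$ and, by item~(\ref{item:prop:stable_space_decomposition-iii}), each $U_{\boldsymbol{z}} \in \widetilde{S}^p(\mathcal{T})$. Because $U_{\boldsymbol{z}}$ lives in the $X_0$-factor associated with $\omega_{\boldsymbol{z}}$, one has $\operatorname{supp} U_{\boldsymbol{z}} \subseteq \overline{\omega_{\boldsymbol{z}}}$, so that setting $u_{\boldsymbol{z}} := U_{\boldsymbol{z}}$ gives $u_{\boldsymbol{z}} \in \mathds{V}_{\boldsymbol{z}}^p$. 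Finally, using Corollary~\ref{cor:restriction_extension} to pass from the $\widetilde{H}^{1/2}(\omega_{\boldsymbol{z}})$-norm used in \eqref{eq:stable_space_decomposition_sum} to the $\widetilde{H}^{1/2}(\Gamma)$-norm (the zero extension is always contractive in the other direction) yields the required stability bound with a constant depending only on $\Gamma$ and $\gamma$.

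\emph{Part (b).} Here the plan is a coloring argument using the finite overlap of vertex patches. By $\gamma$-shape regularity the number of triangles meeting at any vertex, and hence the number of patches $\omega_{\boldsymbol{z}'}$ that overlap a given $\omega_{\boldsymbol{z}}$, is bounded by a constant $N_c = N_c(\gamma)$. A standard greedy coloring argument on the ``patch-overlap graph'' therefore produces a partition ${\mathcal{V}} = J_1 \cup \dots \cup J_{N_c}$ such that within each color class $J_i$ the patches $\{\omega_{\boldsymbol{z}} : \boldsymbol{z} \in J_i\}$ have pairwise essentially disjoint supports.

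With this coloring in hand, the proof assembles itself as follows. First apply the triangle inequality to split $u = v_1 + \sum_{i=1}^{N_c} \bigl(\sum_{\boldsymbol{z} \in J_i} v_{\boldsymbol{z}}\bigr)$, which yields
\begin{align*}
\|u\|_{\widetilde{H}^{1/2}(\Gamma)}^2
\lesssim \|v_1\|_{\widetilde{H}^{1/2}(\Gamma)}^2 + N_c \sum_{i=1}^{N_c} \Bigl\|\sum_{\boldsymbol{z} \in J_i} v_{\boldsymbol{z}}\Bigr\|_{\widetilde{H}^{1/2}(\Gamma)}^2.
\end{align*}
Then to each inner sum, whose summands have pairwise disjoint supports, apply the disjoint-support inequality \eqref{eq:SS} (with $s=1/2$) to obtain $\|\sum_{\boldsymbol{z} \in J_i} v_{\boldsymbol{z}}\|_{\widetilde{H}^{1/2}(\Gamma)}^2 \lesssim \sum_{\boldsymbol{z} \in J_i} \|v_{\boldsymbol{z}}\|_{\widetilde{H}^{1/2}(\Gamma)}^2$. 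Summing over $i$ collapses the double sum back to a sum over all ${\mathcal{V}}$, with a constant that depends only on $\Gamma$ and $\gamma$ (through $N_c$ and \eqref{eq:SS}).

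\emph{Main obstacle.} Neither part is technically deep. The only point that requires care is the passage between the local norm $\|U_{\boldsymbol{z}}\|_{\widetilde{H}^{1/2}(\omega_{\boldsymbol{z}})}$ appearing in Proposition~\ref{prop:stable_space_decomposition} and the global norm $\|u_{\boldsymbol{z}}\|_{\widetilde{H}^{1/2}(\Gamma)}$ demanded in the statement; this equivalence is exactly what Corollary~\ref{cor:restriction_extension} provides, so nothing new has to be invented.
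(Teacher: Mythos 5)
Your proposal is correct and follows essentially the same route as the paper: part (a) is read off from Proposition~\ref{prop:stable_space_decomposition}~(\ref{item:prop:stable_space_decomposition-iv}) (with the passage from the local norm $\|U_{\boldsymbol{z}}\|_{\widetilde{H}^{1/2}(\omega_{\boldsymbol{z}})}$ to the global norm supplied by the extension estimate of Corollary~\ref{cor:restriction_extension}, a point you handle slightly more explicitly than the paper does), and part (b) is the same coloring argument based on the disjoint-support inequality \eqref{eq:SS} and the finite overlap of vertex patches guaranteed by $\gamma$-shape regularity.
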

The previous lemma only made statements about the $\widetilde{H}^{1/2}(\Gamma)$-norm. 
\begin{theorem}
\label{thm:ppreconditioner}
  Let $\mathcal{T}$ be a $\gamma$-shape regular triangulation of $\Gamma$. Then there is a constant $C > 0$ that depends
solely on $\Gamma$ and the $\gamma$-shape regularity of ${\mathcal{T}}$ such that the following is true: 
The preconditioner 
    \begin{align*}
    B^{-1}:= R^T_h \left(\widetilde{D}_h^1 \right)^{-1} R_h  \; + \; \sum_{\boldsymbol{z} \in {\mathcal{V}}}{ R^T_{\boldsymbol{z}} \left( \widetilde{D}_{h,\boldsymbol{z}}^p\right)^{-1}} R_{\boldsymbol{z}},
  \end{align*}
which is implied by the space decomposition~\eqref{abstract_space_splitting}, leads to the spectral condition 
number estimate 
  \begin{align*}
    \kappa(B^{-1} \widetilde{D}_h^p) &\leq C.
  \end{align*}
  \begin{proof}
    The bilinear form $\left<\widetilde{D} \cdot ,\cdot \right>_{\Gamma}$ is equivalent to $\left\|{\cdot}\right\|^2_{\widetilde{H}^{1/2}(\Gamma)}$. 
    Hence, the combination of Lemma~\ref{lemma:ppreconditioner_stable_splitting} and 
    Proposition~\ref{thm_asm_estimate}  give the boundedness of the condition number. 
  \end{proof}
\end{theorem}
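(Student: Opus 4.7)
The plan is to apply the abstract additive Schwarz framework of Proposition~\ref{thm_asm_estimate} to the space decomposition~\eqref{abstract_space_splitting} with exact local solvers, using Lemma~\ref{lemma:ppreconditioner_stable_splitting} as the key analytic ingredient. First, I would match notation with Section~\ref{section_abstract_asm}: take $\mathds{V}=\widetilde S^p(\mathcal{T})$ with bilinear form $a(\cdot,\cdot):=\langle\widetilde D\cdot,\cdot\rangle_\Gamma$; index the subspaces by $i=0$ for $\mathds{V}_0:=\mathds{V}_h^1$ and by the interior vertices $\boldsymbol{z}\in{\mathcal{V}}$ for the patch spaces $\mathds{V}_{\boldsymbol{z}}^p$; and declare all local bilinear forms to be the restrictions of $a(\cdot,\cdot)$ to the respective subspace, so that $\widetilde A_0=\widetilde D_h^1$ and $\widetilde A_{\boldsymbol{z}}=\widetilde D^p_{h,\boldsymbol{z}}$. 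With these choices the preconditioner $B^{-1}=\sum_i R_i^T\widetilde A_i^{-1}R_i$ coincides with the preconditioner in the statement.

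The bridge between the energy and the fractional Sobolev norm is the equivalence $\langle\widetilde D u,u\rangle_\Gamma\sim\|u\|^2_{\widetilde H^{1/2}(\Gamma)}$ stated at the end of Section~\ref{sec:model_problem}; applied to each subspace this gives $\widetilde a_i(v_i,v_i)\sim\|v_i\|^2_{\widetilde H^{1/2}(\Gamma)}$ (for the patch pieces, Corollary~\ref{cor:restriction_extension} additionally identifies $\|v_{\boldsymbol{z}}\|_{\widetilde H^{1/2}(\Gamma)}$ with $\|v_{\boldsymbol{z}}\|_{\widetilde H^{1/2}(\omega_{\boldsymbol{z}})}$ since $\operatorname{supp}v_{\boldsymbol{z}}\subseteq\overline{\omega_{\boldsymbol{z}}}$, so the two natural choices of local norm coincide up to shape-regularity-dependent constants). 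For the lower bound on $\lambda_{\min}(B^{-1}\widetilde D_h^p)$, Lemma~\ref{lemma:ppreconditioner_stable_splitting}(a) exhibits, for every $u\in\mathds{V}$, a decomposition $u=u_1+\sum_{\boldsymbol{z}}u_{\boldsymbol{z}}$ with $\|u_1\|_{\widetilde H^{1/2}(\Gamma)}^2+\sum_{\boldsymbol{z}}\|u_{\boldsymbol{z}}\|_{\widetilde H^{1/2}(\Gamma)}^2\le c_1\|u\|_{\widetilde H^{1/2}(\Gamma)}^2$; translating to energies yields the constant $C_0$ required by Proposition~\ref{thm_asm_estimate}(a), depending only on $\Gamma$ and the $\gamma$-shape regularity.

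For the upper bound on $\lambda_{\max}(B^{-1}\widetilde D_h^p)$, Lemma~\ref{lemma:ppreconditioner_stable_splitting}(b) guarantees that \emph{any} decomposition $u=v_1+\sum_{\boldsymbol{z}}v_{\boldsymbol{z}}$ with $v_1\in\mathds{V}_h^1$ and $v_{\boldsymbol{z}}\in\mathds{V}^p_{\boldsymbol{z}}$ satisfies $\|u\|^2_{\widetilde H^{1/2}(\Gamma)}\le c_2\bigl(\|v_1\|^2_{\widetilde H^{1/2}(\Gamma)}+\sum_{\boldsymbol{z}}\|v_{\boldsymbol{z}}\|^2_{\widetilde H^{1/2}(\Gamma)}\bigr)$; after passing to energies this furnishes the constant $C_1$ in Proposition~\ref{thm_asm_estimate}(b). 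Combining both inequalities via Proposition~\ref{thm_asm_estimate}(c) I then conclude $\kappa(B^{-1}\widetilde D_h^p)\le C_0 C_1=:C$, where $C$ depends only on $\Gamma$ and $\gamma$, as claimed.

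There is no real obstacle left in this assembly: the only substantive analysis, namely the $\widetilde H^{1/2}$-stable localization, was carried out in Proposition~\ref{prop:stable_space_decomposition} and repackaged as Lemma~\ref{lemma:ppreconditioner_stable_splitting}(a); the coloring / finite-overlap argument behind the upper bound (b) uses only shape regularity. The one place I would be careful is to verify that the norm equivalence constants implicit in the passage between $\langle\widetilde D\cdot,\cdot\rangle_\Gamma$ and $\|\cdot\|^2_{\widetilde H^{1/2}(\Gamma)}$ for the local patch spaces are uniform — this is ensured precisely by Corollary~\ref{cor:restriction_extension}, which is why the final constant $C$ retains the stated $h$- and $p$-independence.
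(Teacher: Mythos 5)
Your proposal is correct and follows exactly the paper's argument: the energy norm equivalence $\langle\widetilde D\cdot,\cdot\rangle_\Gamma\sim\|\cdot\|^2_{\widetilde H^{1/2}(\Gamma)}$ turns the two-sided stable splitting of Lemma~\ref{lemma:ppreconditioner_stable_splitting} into the hypotheses of Proposition~\ref{thm_asm_estimate}, which yields the uniform condition number bound. The only (harmless) extra detour is your invocation of Corollary~\ref{cor:restriction_extension}: since the local solvers are exact restrictions of $a(\cdot,\cdot)$ and Lemma~\ref{lemma:ppreconditioner_stable_splitting} is already phrased in the global $\widetilde H^{1/2}(\Gamma)$-norm of the zero-extended patch functions, that corollary is not actually needed here.
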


 \subsection{Multilevel preconditioning on adaptive meshes}\label{sec:lmld}
\label{section_adaptive_meshes}

The preconditioner of Theorem~\ref{thm:ppreconditioner} relies on the space decomposition
(\ref{abstract_space_splitting}). In this section, we discuss how the space $\widetilde{S}^1(\mathcal{T})$ of
piecewise linear function can be further decomposed in a multilevel fashion. Our setting will be one
where $\mathcal{T}$ is the finest mesh of a sequence $(\mathcal{T}_\ell)_{\ell=0}^L$ of nested meshes that 
are generated by newest vertex bisection (NVB); see Figure~\ref{fig:nvb:bisec} for a description. We 
point the reader to \cite{stevenson,kpp} for a detailed discussion of NVB. A key feature of NVB is that it creates
sequences of meshes that are uniformly shape regular. We mention in passing that further properties of NVB 
were instrumental in proving optimality of $h$-adaptive algorithms in both FEM~\cite{ckns,ffp_adaptive_fem} 
and BEM~\cite{partOne,partTwo,fkmp13,gantumur}. 
Before discussing the details of the multilevel space decomposition, we stress that 
the preconditioner described in Section~\ref{sec:hpprecond} is independent of the chosen
refinement strategy (such as NVB) as long as it satisfies the assumptions in Section~\ref{sec:hpprecond}, whereas 
the condition number estimates for the local multilevel preconditioner discussed in the present 
Section~\ref{sec:lmld} depend on the fact that the underlying refinement strategy is based on NVB.  
\bigskip

\begin{figure}[t]
 \centering

 \includegraphics[width=0.2\textwidth]{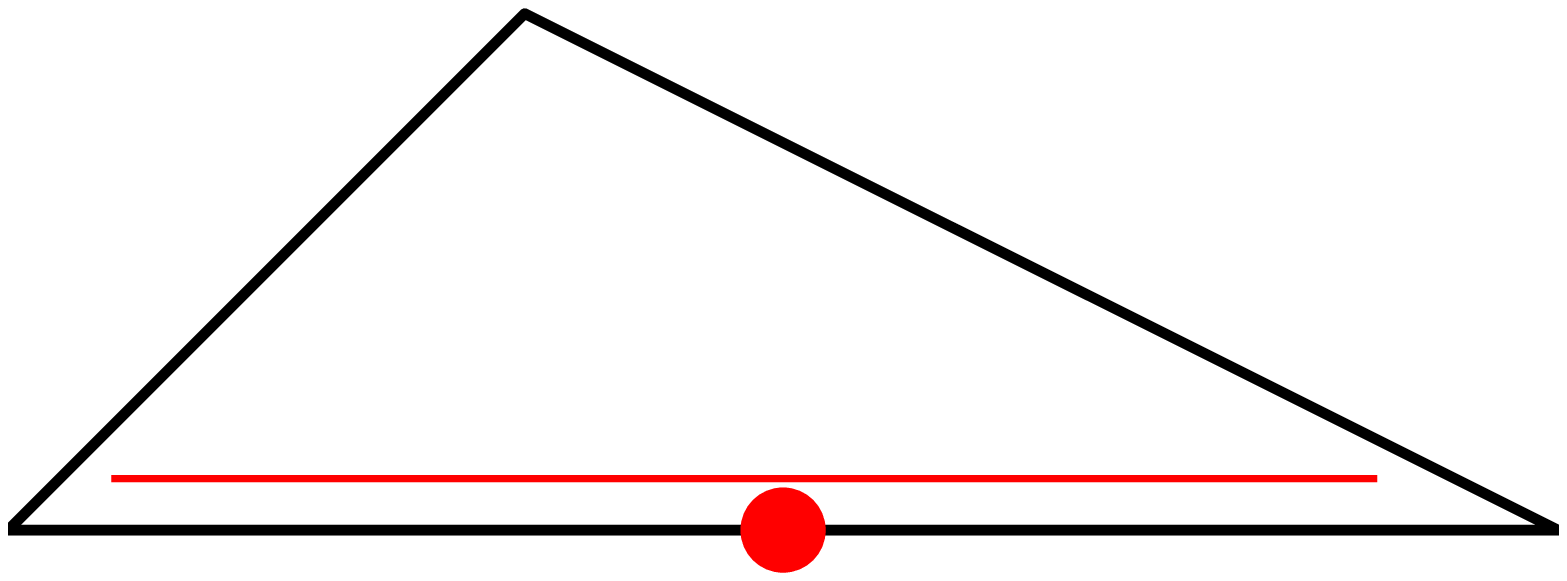} \quad\nolinebreak
 \includegraphics[width=0.2\textwidth]{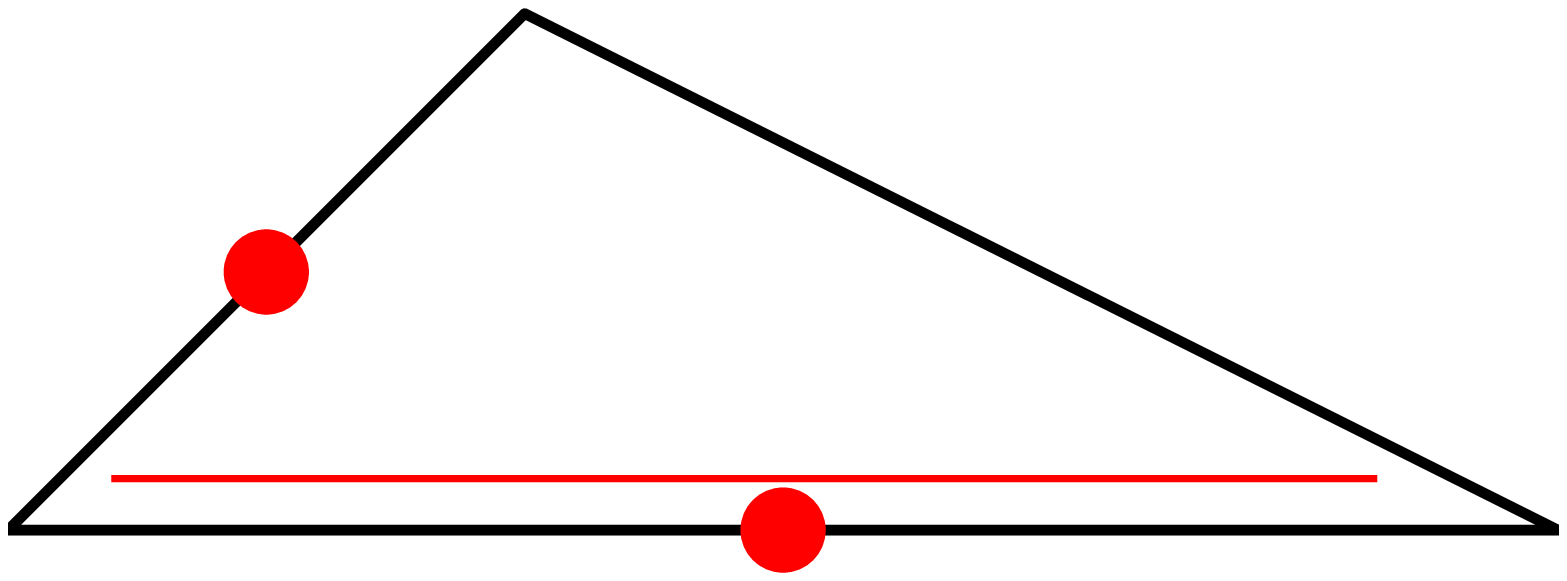} \quad\nolinebreak
 \includegraphics[width=0.2\textwidth]{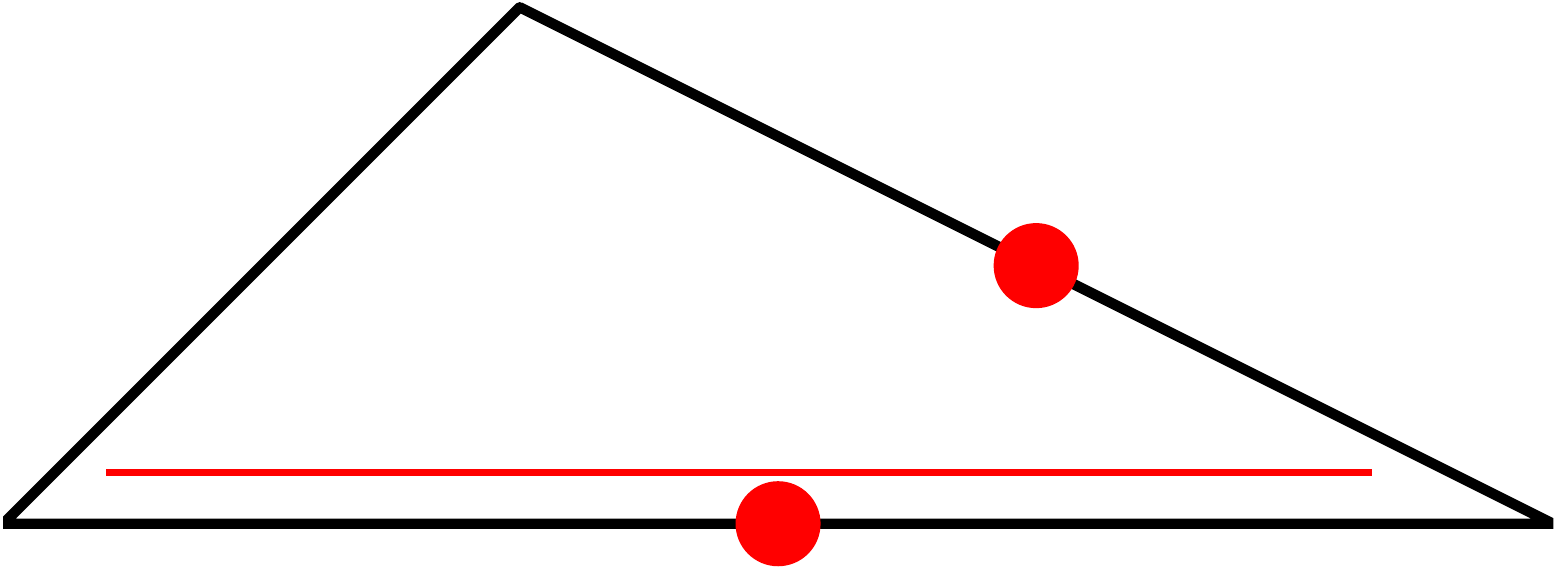} \quad\nolinebreak
 \includegraphics[width=0.2\textwidth]{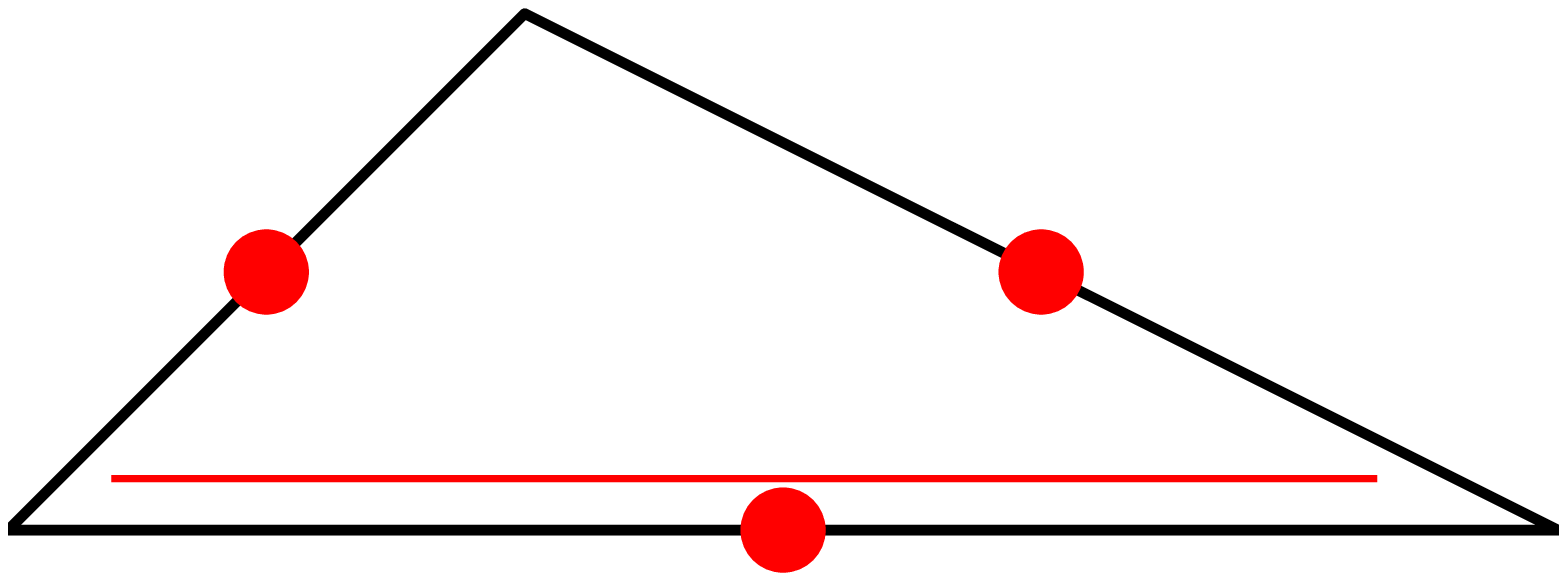} \\
 \includegraphics[width=0.2\textwidth]{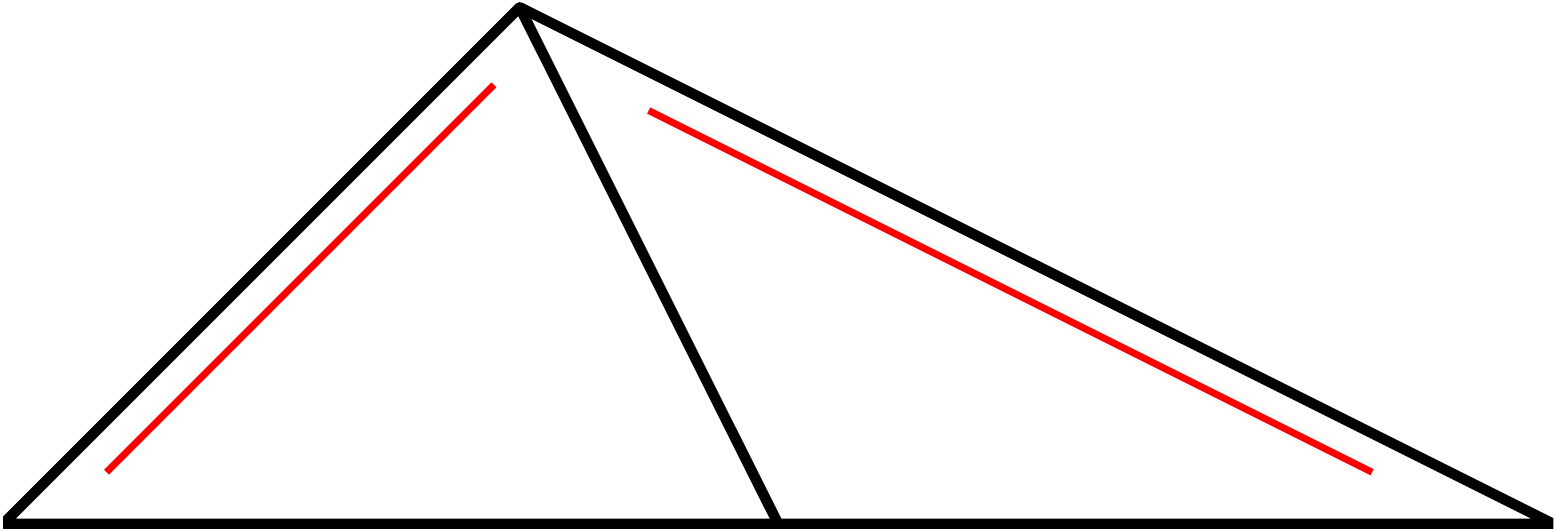} \quad
 \includegraphics[width=0.2\textwidth]{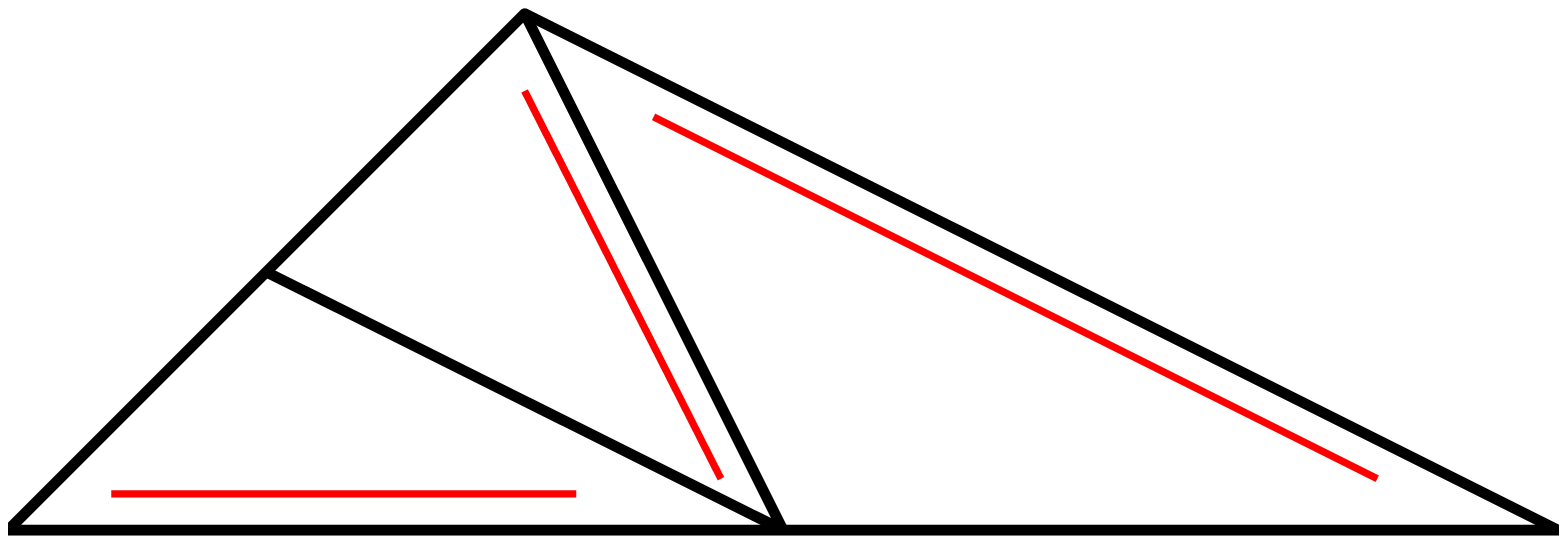}\quad
 \includegraphics[width=0.2\textwidth]{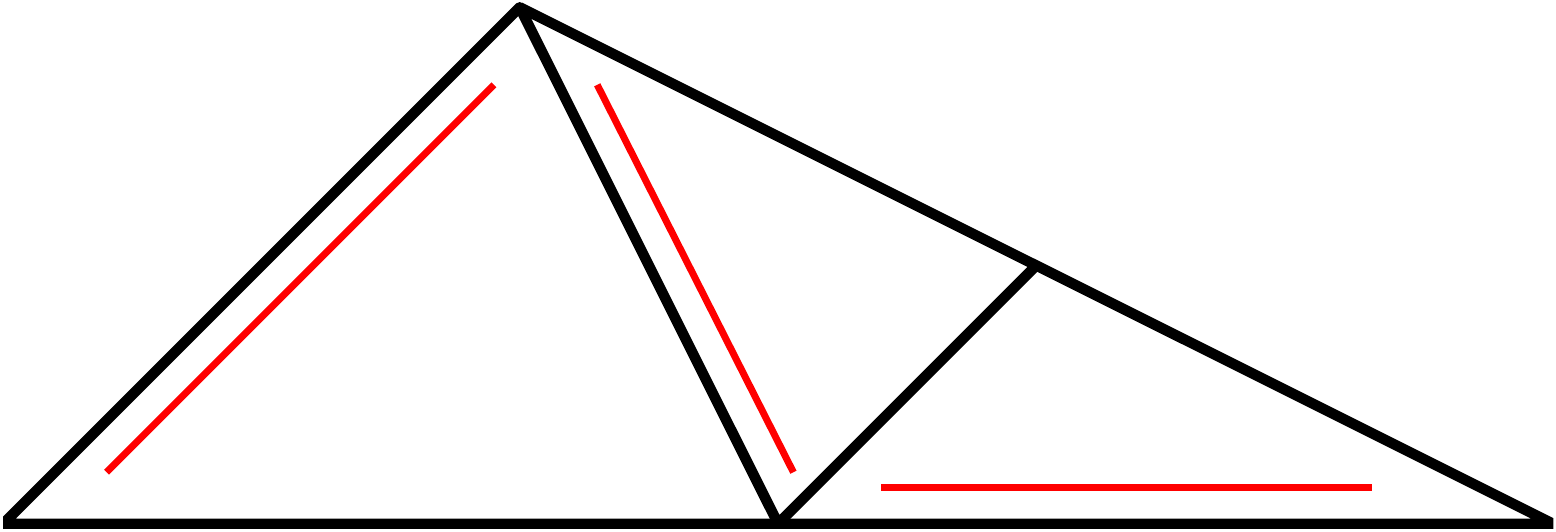}\quad
 \includegraphics[width=0.2\textwidth]{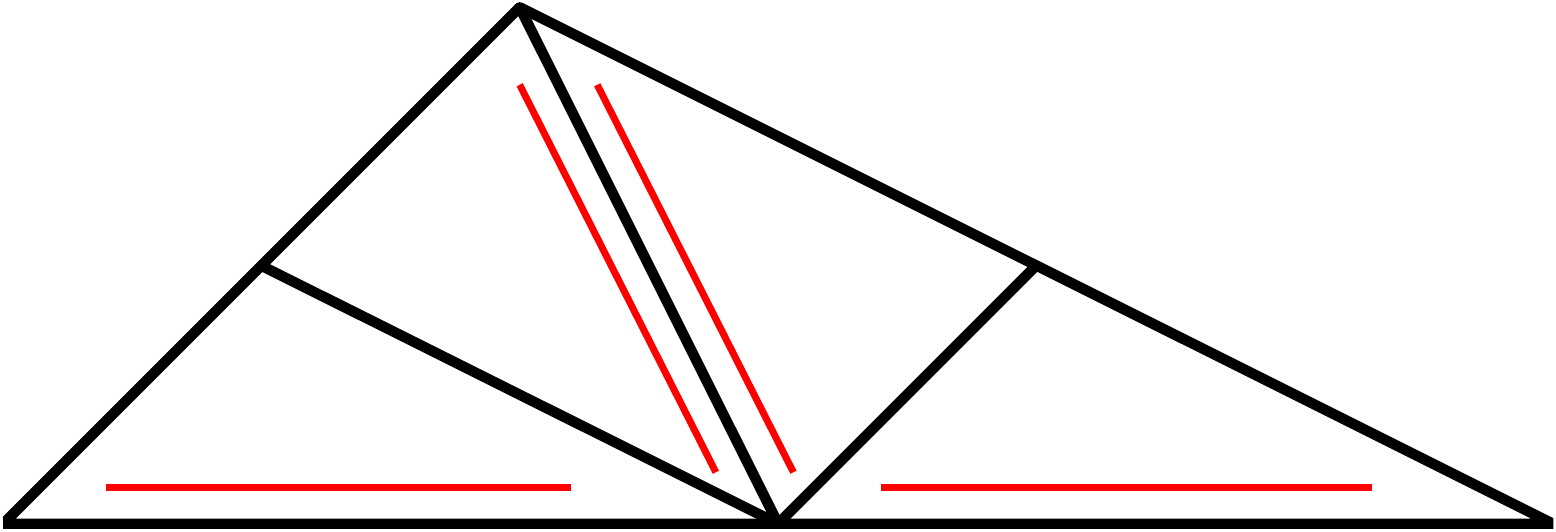}
 \caption{For each element $K\in\mathcal{T}_\ell$ there exists exactly one \emph{reference edge} indicated by the red line
 (upper left plot). The element $K$ is refined by bisecting its reference edge. This leads to a new node (red dot) and
 two son elements. The reference edges of the son elements are opposite to the \emph{newest vertex} (lower left plot).
 Hanging nodes are avoided as follows: Assume that some of the edges of the triangle, but at least the reference edge,
 are marked for refinement (upper plots). The triangle will be split into two, three or four son elements by iterative
 application of the \emph{newest vertex bisection (NVB)}.
 }
 \label{fig:nvb:bisec}

\end{figure}

Adaptive algorithms create sequences of meshes $(\mathcal{T}_\ell)_{\ell=0}^L$. Typically, the procedure starts 
with an \emph{initial} triangulation $\mathcal{T}_0$ and the further members of the sequences are created 
inductively. That is, mesh $\mathcal{T}_\ell$ is obtained from $\mathcal{T}_{\ell-1}$ by
refining some elements of $\mathcal{T}_{\ell-1}$. In an adaptive environment, these elements are determined 
by a marking criterion (``marked elements'') and a mesh closure condition. 
Usually, the following assumptions on the mesh refinement are made:
\begin{itemize}
  \item $\mathcal{T}_\ell$ is regular for all $\ell\in\mathbb{N}_0$, i.e., there exist no hanging nodes;
  \item The meshes $\mathcal{T}_0,\mathcal{T}_1,\dots$ are uniformly $\gamma$-shape-regular, i.e., 
with $|K|$ denoting the surface area of an element $K\in\mathcal{T}_\ell$ and $\mathrm{diam}(K)$ 
the Euclidean diameter, we have 
    \begin{align}
      \sup_{\ell\in\mathbb{N}_0} \max_{K\in\mathcal{T}_\ell} \frac{\mathrm{diam}(K)^2}{|K|} \leq \gamma.
      \label{eq:refinement-shape-regularity}
    \end{align}
\end{itemize}

We consider a sequence of triangulations $\mathcal{T}_0,\dots,\mathcal{T}_L$, which is created by iteratively applying NVB. The corresponding sets of vertices are denoted ${\mathcal{V}}_0,\dots,{\mathcal{V}}_L$.
For a vertex ${\boldsymbol{z}}\in{\mathcal{V}}_\ell$, the associated patch is denoted by $\omega_{\ell,{\boldsymbol{z}}}$.

In the construction of the $p$-preconditioner in Section~\ref{sec:hpprecond} we only considered a single mesh $\mathcal{T}$.
For the remainder of the paper, the $p$ part will always be constructed with respect to the finest mesh $\mathcal{T}_L$.
For a simpler presentation we set $\mathcal{T} := \mathcal{T}_L$ and ${\mathcal{V}} := {\mathcal{V}}_L$.

\subsubsection{A refined splitting for adaptive meshes}

\begin{figure}
  \centering
  \includegraphics[width=0.3\textwidth]{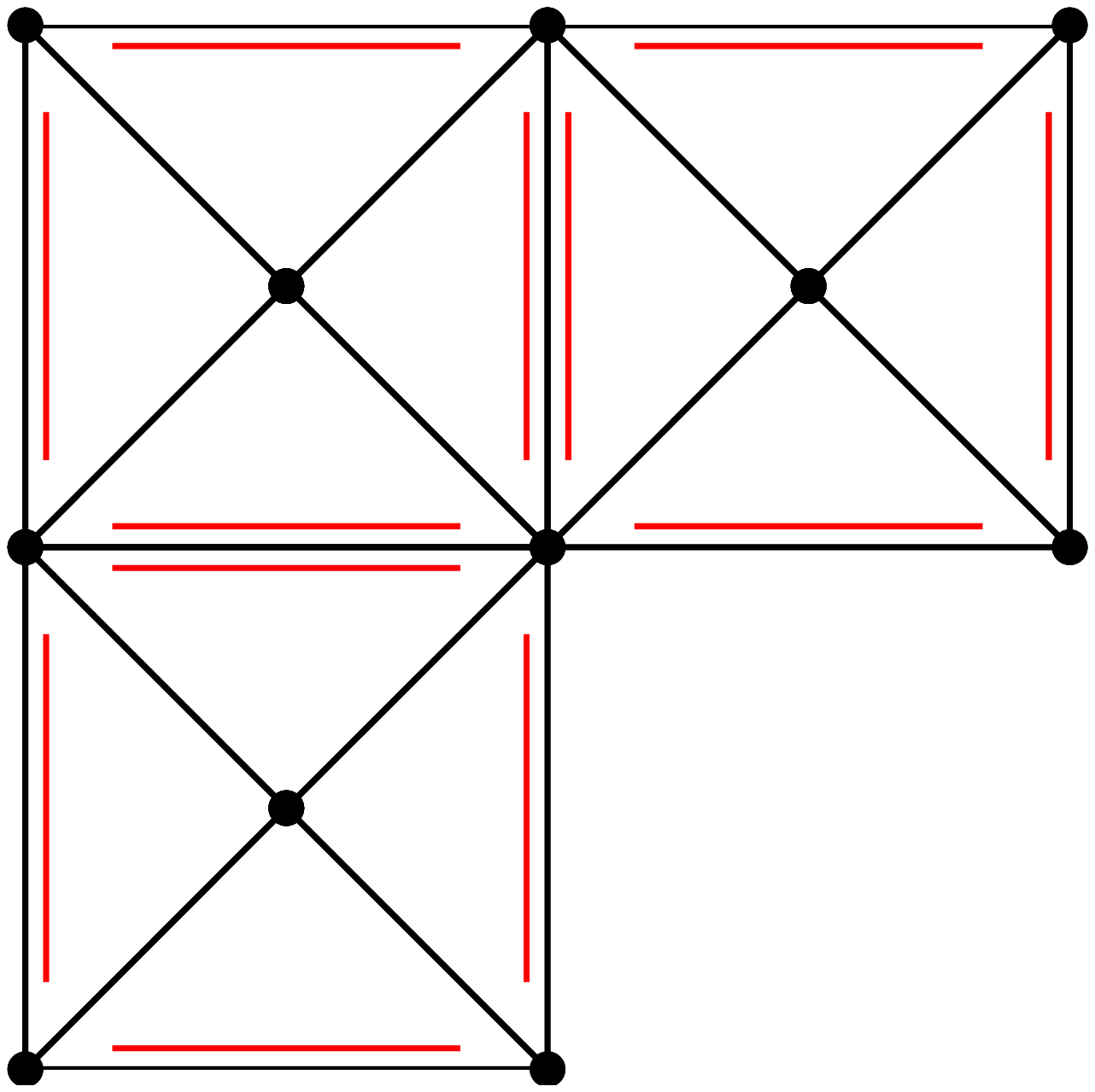}
  \includegraphics[width=0.3\textwidth]{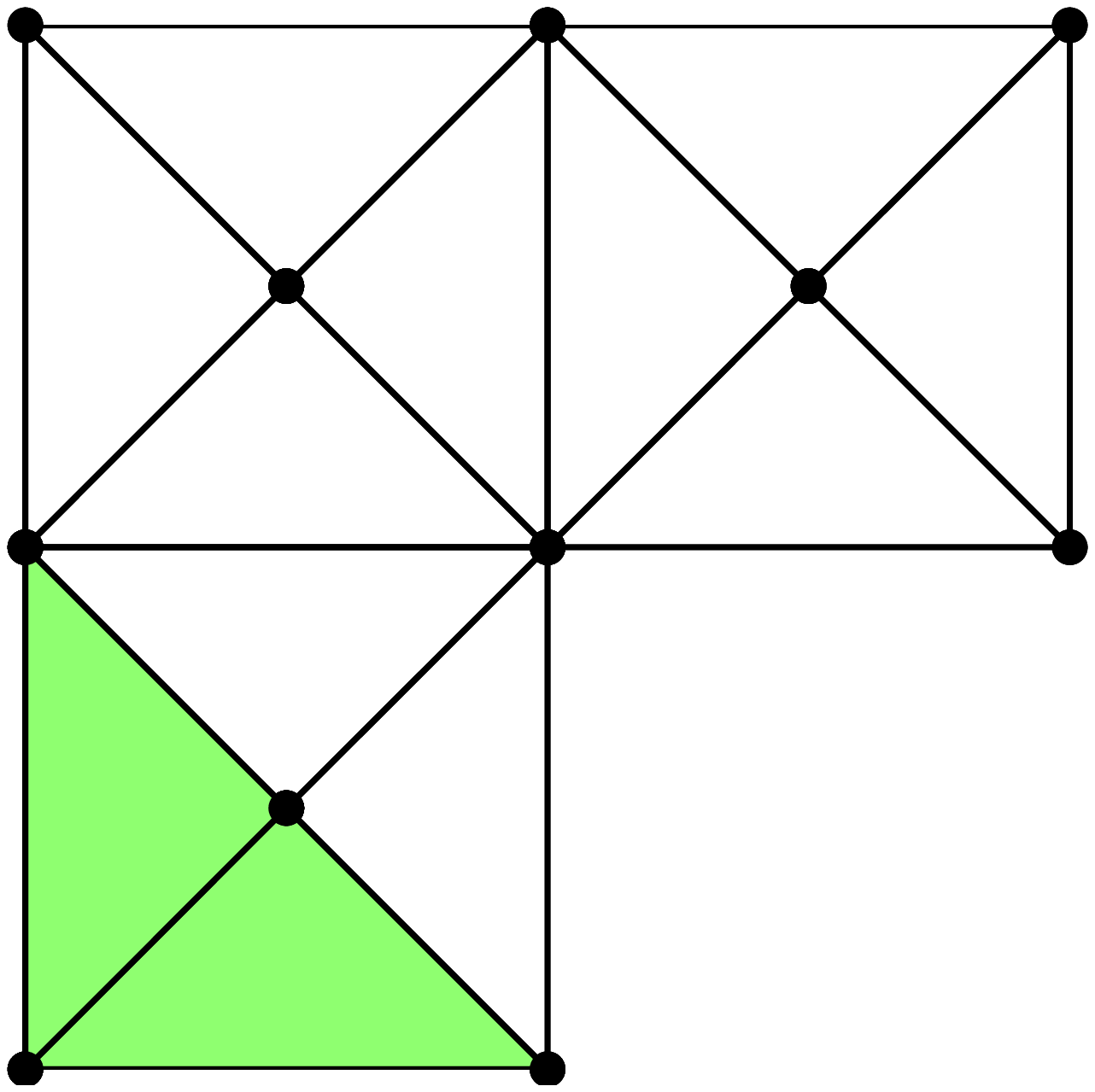}
  \includegraphics[width=0.3\textwidth]{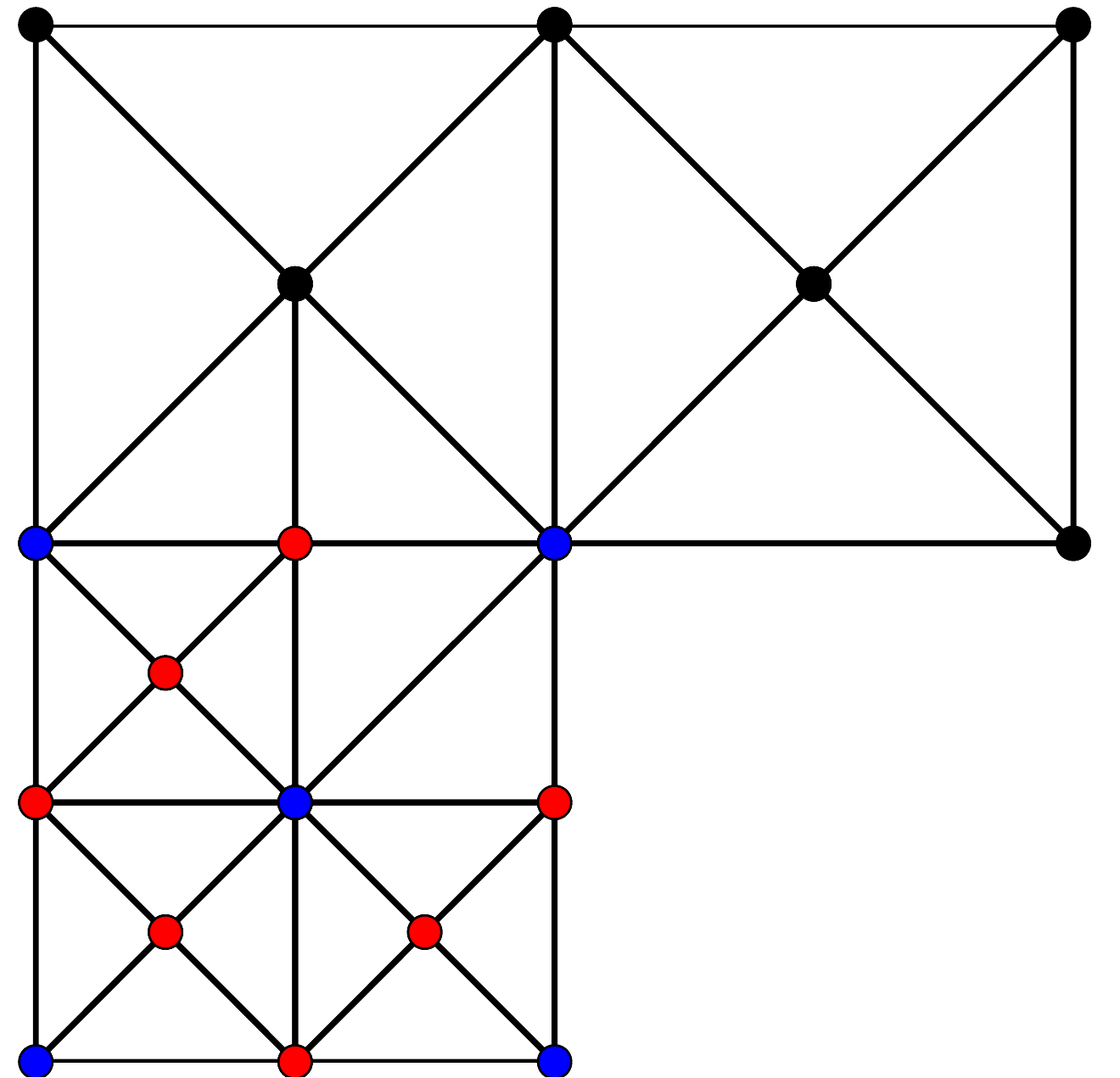}
  \caption{Visualization of the definition~\eqref{eq:def:nodesTilde} of the local subsets $\widetilde{\mathcal{V}}_\ell$: Starting
  with a triangulation $\mathcal{T}_{\ell-1}$ (left), we mark two elements, indicated by green triangles (middle), for
  refinement. Using iterated NVB refinement we obtain the mesh $\mathcal{T}_\ell$ (right).
  The set $\widetilde{\mathcal{V}}_\ell$ consists of the new vertices (red) and neighboring vertices, where the corresponding
  vertex patches have changed (blue).}
  \label{fig:nodesTilde}
\end{figure}

The space decomposition from~\eqref{abstract_space_splitting} involves the global lowest-order space $\mathds{V}_h^1 =
\widetilde{S}^1(\mathcal{T}_L)$.
Therefore, the computation of the corresponding additive Schwarz operator needs the inversion of a global problem, 
which is, in practice, very costly, and often even infeasible.
To overcome this disadvantage, we consider a refined splitting of the space $\mathds{V}_h^1$ that relies on the hierarchy of the
adaptively refined meshes $\mathcal{T}_0,\dots,\mathcal{T}_L$.
The corresponding local multilevel preconditioner was introduced and analyzed in~\cite{ffps,dissTF}.
See also~\cite{hiptwuzheng2012,wuchen06,xch10,xcn09} for local multilevel preconditioners for (adaptive) FEM
, \cite{tran_stephan_asm_h_96,hiptmair_mao_BIT_2012} for (uniform) BEM, and \cite{amcl03,maischak_multilevel_asm,tran_stephan_mund_hierarchical_prec} for
(restricted) approaches for adaptive BEM.

Set $\widetilde{\mathcal{V}}_0  := {\mathcal{V}}_0$ and define the local subsets
\begin{align}\label{eq:def:nodesTilde}
  \widetilde{\mathcal{V}}_\ell := {\mathcal{V}}_\ell \backslash {\mathcal{V}}_{\ell-1} \cup \{ {\boldsymbol{z}}\in{\mathcal{V}}_{\ell-1} \,:\,
  \omega_{\ell,{\boldsymbol{z}}} \subsetneq \omega_{\ell-1,{\boldsymbol{z}}} \} \quad\text{for } \ell\geq 1
\end{align}
of newly created vertices plus some of their neighbors, see Figure~\ref{fig:nodesTilde} for a visualization.
Based on these sets, we consider the space decomposition
\begin{align}\label{eq:splitting:refined}
  \mathds{V}_h^1 = \sum_{\ell=0}^L \sum_{{\boldsymbol{z}}\in\widetilde{\mathcal{V}}_\ell} \mathds{V}^1_{\ell,{\boldsymbol{z}}} \quad\text{with}\quad \mathds{V}^1_{\ell,{\boldsymbol{z}}} :=
  \mathrm{span}\{\varphi_{\ell,{\boldsymbol{z}}}\},
\end{align}
where $\varphi_{\ell,{\boldsymbol{z}}}\in \widetilde{S}^1(\mathcal{T}_\ell)$ is the nodal hat function with $\varphi_{\ell,{\boldsymbol{z}}}({\boldsymbol{z}}) = 1$ and $\varphi_{\ell,{\boldsymbol{z}}}({\boldsymbol{z}}') =
0$ for all ${\boldsymbol{z}}'\in
{\mathcal{V}}_\ell \backslash \{{\boldsymbol{z}}\}$.
The basic idea of this splitting is that we do a diagonal scaling only in the regions where the meshes have been refined.
We will use local exact solvers, i.e.,
\begin{align*}
  \widetilde a_{\ell,{\boldsymbol{z}}}(u_{\ell,{\boldsymbol{z}}},v_{\ell,{\boldsymbol{z}}}) := \left<\widetilde{D}  (R_{\ell,{\boldsymbol{z}}})^T u_{\ell,{\boldsymbol{z}}} , (R_{\ell,{\boldsymbol{z}}})^T
  v_{\ell,{\boldsymbol{z}}}  \right>_{\Gamma}
  \quad\text{for all } u_{\ell,{\boldsymbol{z}}},v_{\ell,{\boldsymbol{z}}} \in \mathds{V}^1_{\ell,{\boldsymbol{z}}},
\end{align*}
where $(R_{\ell,{\boldsymbol{z}}})^T : \mathds{V}^1_{\ell,{\boldsymbol{z}}} \to \mathds{V}_h^1$ denotes the canonical embedding operator.
Let $\widetilde{D}_h^1$ denote the Galerkin matrix of $\widetilde{D}$ with respect to the basis
$(\varphi_{L,{\boldsymbol{z}}})_{{\boldsymbol{z}}\in{\mathcal{V}}_L}$ of $\mathds{V}_h^1$ and define $\widetilde{D}_{\ell,{\boldsymbol{z}}}^1 := \widetilde
a_{\ell,{\boldsymbol{z}}} (\varphi_{\ell,{\boldsymbol{z}}},\varphi_{\ell,{\boldsymbol{z}}})$.
Then, the \emph{local multilevel diagonal (LMLD)} preconditioner associated to the
splitting~\eqref{eq:splitting:refined} reads
\begin{align}
  (B_h^1)^{-1} := \sum_{\ell=0}^L \sum_{{\boldsymbol{z}}\in\widetilde{\mathcal{V}}_\ell} (R_{\ell,{\boldsymbol{z}}})^T \left(\widetilde{D}_{\ell,{\boldsymbol{z}}}^1\right)^{-1}
  R_{\ell,{\boldsymbol{z}}}.
\end{align}
We stress that this preconditioner corresponds to a diagonal scaling with respect to the local subset of vertices
$\widetilde{\mathcal{V}}_\ell$ on each level $\ell = 0,\dots,L$.
Further details and the proof of the following result are found in~\cite{ffps,dissTF}.
\begin{proposition}\label{thm:hpreconditioner}
  The splitting~\eqref{eq:splitting:refined} together with $\widetilde a_{\ell,{\boldsymbol{z}}}(\cdot,\cdot)$ 
and the operators $R_{\ell,\boldsymbol{z}}^T$ satisfies the
  requirements of Proposition~\ref{thm_asm_estimate} with constants depending only on $\Gamma$ 
  and the initial triangulation $\mathcal{T}_0$.
  For the additive Schwarz operator $P_h^1 := (B_h^1)^{-1} \widetilde{D}_h^1$, there holds in particular 
  \begin{align}
     c \left<\widetilde{D} u_h ,u_h \right>_{\Gamma} \leq \left<\widetilde{D} P_h^1 u_h , u_h \right>_{\Gamma} \leq C \left<\widetilde{D} u_h ,u_h \right>_{\Gamma} \quad\text{for all } u_h\in \mathds{V}_h^1.
  \end{align}
  The constants $c$, $C>0$ depend only on $\Gamma$, the initial triangulation $\mathcal{T}_0$, and the use of NVB for refinement, 
  i.e., $\mathcal{T}_{\ell+1} = \operatorname{refine}(\mathcal{T}_{\ell}, \mathcal{M}_{\ell})$ 
with arbitrary set $\mathcal{M}_{\ell} \subseteq \mathcal{T}_{\ell}$ of marked elements.
  \qed
\end{proposition}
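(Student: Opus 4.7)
The plan is to verify the two hypotheses of Proposition~\ref{thm_asm_estimate} for the decomposition~\eqref{eq:splitting:refined} with the exact local solvers $\widetilde a_{\ell,\boldsymbol{z}}$. Since $\langle \widetilde{D}\cdot,\cdot\rangle_{\Gamma}$ is equivalent to $\|\cdot\|_{\widetilde{H}^{1/2}(\Gamma)}^2$, everything reduces to proving that for each $u \in \mathds{V}_h^1$ there is a decomposition $u = \sum_{\ell,\boldsymbol{z}} u_{\ell,\boldsymbol{z}}$ with $u_{\ell,\boldsymbol{z}} \in \mathds{V}^1_{\ell,\boldsymbol{z}}$ and $\sum_{\ell,\boldsymbol{z}} \|u_{\ell,\boldsymbol{z}}\|_{\widetilde{H}^{1/2}(\Gamma)}^2 \lesssim \|u\|_{\widetilde{H}^{1/2}(\Gamma)}^2$, together with the reverse inequality for arbitrary decompositions.

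For the lower bound, I would construct the decomposition by telescoping with level-wise Scott--Zhang projectors $J_{\ell}: L^2(\Gamma) \to \widetilde S^1(\mathcal{T}_\ell)$, setting $w_0 := J_0 u$ and $w_\ell := (J_\ell - J_{\ell-1}) u$ so that $u = \sum_{\ell=0}^L w_\ell$. Because $w_\ell \in \widetilde S^1(\mathcal{T}_\ell)$, it expands in the level-$\ell$ hat-function basis as $w_\ell = \sum_{\boldsymbol{z} \in \mathcal{V}_\ell} w_\ell(\boldsymbol{z})\,\varphi_{\ell,\boldsymbol{z}}$. The crucial localization step is to choose the Scott--Zhang dual functionals so that $J_\ell$ and $J_{\ell-1}$ agree on every vertex $\boldsymbol{z}$ whose patch did not change between the two meshes; this is possible by the very definition of $\widetilde{\mathcal{V}}_\ell$ in~\eqref{eq:def:nodesTilde} and the locality of the Scott--Zhang construction. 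Consequently $w_\ell(\boldsymbol{z}) = 0$ for $\boldsymbol{z} \notin \widetilde{\mathcal{V}}_\ell$, and setting $u_{\ell,\boldsymbol{z}} := w_\ell(\boldsymbol{z})\,\varphi_{\ell,\boldsymbol{z}}$ gives a valid decomposition with respect to~\eqref{eq:splitting:refined}.

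The stability of this decomposition in $\widetilde{H}^{1/2}(\Gamma)$ is then obtained by establishing the analogous BPX-type bound in $L^2$ and in $\widetilde{H}^1$ separately, and interpolating via Proposition~\ref{interpolation_theorem} (using Proposition~\ref{prop:characterization_poly_interp} for $p=1$ to identify the interpolation norm with the fractional Sobolev norm). The $L^2$ and $\widetilde{H}^1$ estimates rely on the $L^2$- and $\widetilde{H}^1$-stability plus local approximation properties of Scott--Zhang (see~\eqref{scott_zhang_local_l2_approx}--\eqref{scott_zhang_local_h1_stab}), combined with scaling properties $\|\varphi_{\ell,\boldsymbol{z}}\|_{L^2(\Gamma)}^2 \sim h_{\ell,\boldsymbol{z}}^2$ and $\|\nabla \varphi_{\ell,\boldsymbol{z}}\|_{L^2(\Gamma)}^2 \sim 1$, the geometric decay of $h_{\ell,\boldsymbol{z}}$ across levels produced by NVB, and the fact that the $\widetilde{\mathcal{V}}_\ell$ form a finite-overlap cover of the refined regions. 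For the upper bound it suffices to combine a level-wise coloring argument as in Lemma~\ref{lemma:ppreconditioner_stable_splitting}(b) with a strengthened Cauchy--Schwarz inequality across levels (also established via interpolation of its $L^2$ and $\widetilde{H}^1$ counterparts).

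The main obstacle is the strengthened Cauchy--Schwarz estimate across levels on adaptive NVB meshes: on a quasi-uniform mesh this is essentially classical BPX theory, but here one must exploit crucially that $\widetilde{\mathcal{V}}_\ell$ only contains vertices near the genuinely refined region, so that the inner products between $u_{\ell,\boldsymbol{z}}$ and $u_{m,\boldsymbol{z}'}$ with $\ell<m$ vanish as soon as the supports are separated by NVB's refinement tree. Proving this decay in a form uniform in $L$ requires the detailed combinatorial properties of newest vertex bisection from \cite{stevenson,kpp}, and this is precisely where the assumption that $\mathcal{T}_{\ell+1} = \operatorname{refine}(\mathcal{T}_\ell, \mathcal{M}_\ell)$ with NVB enters decisively; the full argument is carried out in \cite{ffps,dissTF}.
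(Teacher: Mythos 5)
The paper offers no proof of this proposition at all: it states the result with a \qed and explicitly defers to \cite{ffps,dissTF} (``Further details and the proof of the following result are found in...''). Your sketch --- telescoping level-wise Scott--Zhang projectors localized to $\widetilde{\mathcal{V}}_\ell$, interpolation of the $L^2$ and $\widetilde{H}^1$ stability bounds via Propositions~\ref{interpolation_theorem} and~\ref{prop:characterization_poly_interp}, and a coloring plus strengthened Cauchy--Schwarz argument for the upper bound --- is a faithful outline of the strategy actually carried out in those references, and like the paper you ultimately rest the decisive NVB combinatorics on \cite{ffps,dissTF}, so there is nothing to object to here.
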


We replace the space $\mathds{V}_h^1$ in~\eqref{abstract_space_splitting} by the refined splitting~\eqref{eq:splitting:refined}
and end up with the space decomposition
\begin{align}\label{eq:hpsplitting}
  \mathds{V} = \sum_{\ell=0}^L \sum_{{\boldsymbol{z}}\in\widetilde{{\mathcal{V}}}_\ell} \mathds{V}^1_{\ell,{\boldsymbol{z}}} + \sum_{{\boldsymbol{z}}\in{\mathcal{V}}_L} \mathds{V}_{L,\boldsymbol{z}}^p.
\end{align}
The following Lemma \ref{lemma_replace_global_space} shows that the preconditioner resulting from the decomposition~\eqref{eq:hpsplitting} is
$hp$-stable. 
The result formalizes the observation that the combination of stable subspace decompositions leads again to a stable
subspace decomposition.
It is a simple consequence of the well-known theory for additive Schwarz methods; see
Section~\ref{section_abstract_asm}. Therefore, details are left to the reader.

\begin{lemma}
\label{lemma_replace_global_space}
  Let $\mathds{V}$ be a finite dimensional vector space, and let $\mathds{V}_j$, $R_{\mathds{V},j}^T$, and $\widetilde{a}_{\mathds{V},j}(\cdot,\cdot)$ for $j=0, \dots, J$
  be a decomposition of $\mathds{V}$ in
  the sense of Section \ref{section_abstract_asm} that satisfies the assumptions of Proposition~\ref{thm_asm_estimate} with constants $C_{0,\mathds{V}}$ and $C_{1,\mathds{V}}$.
  Consider an additional decomposition $\mathds{W}_{\ell}$, $R_{\mathds{W},{\ell}}^T $
    and $\widetilde{a}_{\mathds{W},\ell}(\cdot,\cdot)$ with $\ell=0, \dots, L$
  of $\mathds{V}_0$
  that also satisfies the requirements of Proposition~\ref{thm_asm_estimate} for the bilinear form $\widetilde{a}_{\mathds{V},0}(\cdot,\cdot)$
  with constants $C_{0,\mathds{W}}$ and $C_{1,\mathds{W}}$.
  Define a new additive Schwarz preconditioner as:
  \begin{align*}
    \widetilde{B}^{-1}:= R_{\mathds{V},0}^T \left(\sum_{\ell=0}^{L}{R_{\mathds{W},\ell}^T \widetilde{A}_{\mathds{W},\ell}^{-1} R_{\mathds{W},\ell} } \; \right)R_{\mathds{V},0} \; 
    + \; \sum_{j=1}^{J}{R_{\mathds{V},j}^T \widetilde{A}_{\mathds{V},j}^{-1} R_{\mathds{V},j}}.
  \end{align*}
  This new preconditioner satisfies the assumptions of Proposition~\ref{thm_asm_estimate} with $C_0=\max\left(1,C_{0,\mathds{W}}\right) C_{0,\mathds{V}}$ and
  $C_1=\max\left(1,C_{1,\mathds{W}}\right) C_{1,\mathds{V}}$.
  \qed
\end{lemma}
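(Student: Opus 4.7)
The plan is to verify the two hypotheses of Proposition~\ref{thm_asm_estimate} for the new preconditioner by composing the stable decompositions on the outer level ($\mathds{V}$) and the inner level ($\mathds{V}_0$). I will denote the bilinear form on $\mathds{V}$ by $a(\cdot,\cdot)$. The index set of the new decomposition is $\{(\mathds{W},\ell)\,:\,\ell=0,\dots,L\} \cup \{(\mathds{V},j)\,:\,j=1,\dots,J\}$, with prolongations $R_{\mathds{V},0}^T R_{\mathds{W},\ell}^T$ and $R_{\mathds{V},j}^T$, respectively.

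For the lower eigenvalue bound (the $C_0$-estimate), I start with an arbitrary $u \in \mathds{V}$. By the outer assumption, there is a decomposition $u = \sum_{j=0}^{J} R_{\mathds{V},j}^T u_j$ with $u_j \in \mathds{V}_j$ and $\sum_{j=0}^{J} \widetilde{a}_{\mathds{V},j}(u_j,u_j) \leq C_{0,\mathds{V}}\, a(u,u)$. I then apply the inner assumption to $u_0 \in \mathds{V}_0$ to obtain $u_0 = \sum_{\ell=0}^{L} R_{\mathds{W},\ell}^T w_\ell$ with $\sum_{\ell} \widetilde{a}_{\mathds{W},\ell}(w_\ell,w_\ell) \leq C_{0,\mathds{W}}\, \widetilde{a}_{\mathds{V},0}(u_0,u_0)$. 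Combining these yields a decomposition of $u$ in the new subspace system. Summing the local energies, the contributions of $j\geq 1$ are controlled by $\widetilde{a}_{\mathds{V},j}(u_j,u_j)$, and the $\ell$-contributions are controlled by $C_{0,\mathds{W}}\,\widetilde{a}_{\mathds{V},0}(u_0,u_0)$. Together this gives a total bound of $\max(1,C_{0,\mathds{W}})\cdot C_{0,\mathds{V}}\, a(u,u)$, as claimed.

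For the upper eigenvalue bound (the $C_1$-estimate), I consider an arbitrary decomposition of $u$ in the new system, $u = R_{\mathds{V},0}^T \sum_{\ell=0}^{L} R_{\mathds{W},\ell}^T w_\ell + \sum_{j=1}^{J} R_{\mathds{V},j}^T v_j$. Define $v_0 := \sum_{\ell=0}^{L} R_{\mathds{W},\ell}^T w_\ell \in \mathds{V}_0$, so that $u = \sum_{j=0}^{J} R_{\mathds{V},j}^T v_j$ is a decomposition in the outer system. By the inner upper bound applied to $v_0$, one has $\widetilde{a}_{\mathds{V},0}(v_0,v_0) \leq C_{1,\mathds{W}} \sum_{\ell=0}^L \widetilde{a}_{\mathds{W},\ell}(w_\ell,w_\ell)$. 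By the outer upper bound, $a(u,u) \leq C_{1,\mathds{V}}\sum_{j=0}^{J} \widetilde{a}_{\mathds{V},j}(v_j,v_j)$. Inserting the previous inequality for the $j=0$ term and estimating the terms $j\geq 1$ trivially, the factor $\max(1,C_{1,\mathds{W}})$ emerges, proving the claim with constant $\max(1,C_{1,\mathds{W}})\cdot C_{1,\mathds{V}}$.

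I do not expect any real obstacle here; both steps amount to nesting the two stable decompositions and tracking constants. The only point worth being careful about is the bookkeeping for the factor $\max(1,\cdot)$, which arises because the $j\geq 1$ contributions in the outer decomposition are passed through the composition unchanged, while the $j=0$ contribution is blown up by the inner constant. Since the assertion only concerns abstract additive Schwarz constants, no properties of $\widetilde{H}^{1/2}(\Gamma)$, NVB, or the specific preconditioners of Theorems~\ref{thm:ppreconditioner} and~\ref{thm:hpreconditioner} enter the argument; the lemma is genuinely a structural statement, which is why the authors leave the details to the reader.
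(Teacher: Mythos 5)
Your proof is correct and is precisely the standard nesting-of-stable-decompositions argument that the paper alludes to when it leaves the details to the reader: the $C_0$-bound by composing the outer and inner decompositions, the $C_1$-bound by collapsing the inner sum into an element $v_0\in\mathds{V}_0$, with the $\max(1,\cdot)$ factors arising exactly as you describe. Nothing is missing.
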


\begin{theorem}
  \label{thm:p_precond_with_multilevel}
  Assume that $\mathcal{T}$ is generated from a regular and shape-regular initial triangulation $\mathcal{T}_0$ by successive application of NVB.
  Based on the space decomposition~\eqref{eq:hpsplitting} define the preconditioner
  \begin{align*}
    B_2^{-1}:= R_h^T (B_h^1)^{-1} R_h  \; + \; \sum_{\boldsymbol{z} \in {\mathcal{V}}_L}{R^T_{\boldsymbol{z}} \left(\widetilde{D}_{h,\boldsymbol{z}}^p\right)^{-1}} R_{\boldsymbol{z}}.
  \end{align*}
  Then, for constants $c$, $C>0$ that depend only on $\Gamma$, $\mathcal{T}_0$, and the use of NVB refinement, the 
  extremal eigenvalues of $B_2^{-1} \widetilde D_h^p$ satisfy 
  \begin{align*}
    c\leq \lambda_\mathrm{min}(B_2^{-1} \widetilde D_h^p) 
    \leq \lambda_\mathrm{max} (B_2^{-1} \widetilde D_h^p) \leq C.
  \end{align*}
  In particular, the condition number $\kappa(B_2^{-1} \widetilde D_h^p)$ is bounded independently of $h$ and
  $p$.
\begin{proof}
  The proof follows from Lemma~\ref{lemma:ppreconditioner_stable_splitting}, Proposition~\ref{thm:hpreconditioner}, and
  Lemma~\ref{lemma_replace_global_space}.
\end{proof}
\end{theorem}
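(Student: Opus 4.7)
The plan is to apply Lemma \ref{lemma_replace_global_space} with the outer decomposition of $\mathds{V} = \widetilde{S}^p(\mathcal{T})$ being the two-level $hp$-splitting (\ref{abstract_space_splitting}) from Section~\ref{sec:hpprecond}, and the inner decomposition of the ``coarse'' component $\mathds{V}_0 := \mathds{V}_h^1 = \widetilde{S}^1(\mathcal{T}_L)$ being the local multilevel splitting (\ref{eq:splitting:refined}) of Section~\ref{sec:lmld}. The point is that both decompositions have already been shown to satisfy the hypotheses of Proposition~\ref{thm_asm_estimate} with constants that do not depend on $h$ and $p$; combining them must therefore give a stable decomposition of $\widetilde{S}^p(\mathcal{T})$, and by Proposition~\ref{thm_asm_estimate} the associated additive Schwarz preconditioner yields a uniformly bounded condition number.

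More concretely, I would first identify the roles in Lemma~\ref{lemma_replace_global_space}: take $\mathds{V}_0 := \mathds{V}_h^1$, $\mathds{V}_{\boldsymbol{z}} := \mathds{V}_{\boldsymbol{z}}^p$ for $\boldsymbol{z} \in {\mathcal{V}}_L$, with $R_{\mathds{V},0}^T = R_h^T$, $R_{\mathds{V},\boldsymbol{z}}^T = R_{\boldsymbol{z}}^T$, and all local bilinear forms taken to be the exact restrictions of $\langle \widetilde D \cdot, \cdot\rangle_\Gamma$, so that $\widetilde A_{\mathds{V},0} = \widetilde D_h^1$ and $\widetilde A_{\mathds{V},\boldsymbol{z}} = \widetilde D_{h,\boldsymbol{z}}^p$. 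Lemma~\ref{lemma:ppreconditioner_stable_splitting} together with the norm equivalence $\langle \widetilde D u,u\rangle_\Gamma \sim \|u\|_{\widetilde H^{1/2}(\Gamma)}^2$ then verifies the hypotheses of Proposition~\ref{thm_asm_estimate} with constants $C_{0,\mathds{V}}$, $C_{1,\mathds{V}}$ depending only on $\Gamma$ and the shape-regularity constant $\gamma$ (and hence ultimately only on $\Gamma$ and $\mathcal{T}_0$, since NVB preserves uniform shape regularity).

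Next, the inner decomposition of $\mathds{V}_h^1$ is exactly the one from Section~\ref{sec:lmld}: take $\mathds{W}_\ell = \mathds{V}^1_{\ell,{\boldsymbol{z}}}$ (indexed over $\ell$ and ${\boldsymbol{z}} \in \widetilde{\mathcal{V}}_\ell$), with embedding operators $R_{\ell,{\boldsymbol{z}}}^T$ and local exact solvers $\widetilde a_{\ell,{\boldsymbol{z}}}$, so that the resulting additive Schwarz preconditioner is precisely $(B_h^1)^{-1}$. By Proposition~\ref{thm:hpreconditioner}, this decomposition satisfies the hypotheses of Proposition~\ref{thm_asm_estimate} with constants $C_{0,\mathds{W}}$, $C_{1,\mathds{W}}$ depending only on $\Gamma$, $\mathcal{T}_0$, and the use of NVB. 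Plugging the formulas into the definition of $\widetilde B^{-1}$ in Lemma~\ref{lemma_replace_global_space} gives exactly the operator $B_2^{-1}$ written in the statement.

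The conclusion of Lemma~\ref{lemma_replace_global_space} then says that $B_2^{-1}$ arises from a decomposition of $\mathds{V}$ satisfying the assumptions of Proposition~\ref{thm_asm_estimate} with constants $C_0 = \max(1, C_{0,\mathds{W}}) C_{0,\mathds{V}}$ and $C_1 = \max(1, C_{1,\mathds{W}}) C_{1,\mathds{V}}$, so $\kappa(B_2^{-1} \widetilde D_h^p) \leq C_0 C_1$ with $C_0, C_1$ independent of both $h$ and $p$. Since all ingredients are already packaged as reusable lemmas, there is no real obstacle: the only subtle point I would emphasize is the bookkeeping check that the exact local solver $\widetilde a_{\mathds{V},0}(\cdot,\cdot) = \langle \widetilde D R_h^T \cdot, R_h^T\cdot\rangle_\Gamma$ used in the outer splitting is indeed the bilinear form to which Proposition~\ref{thm:hpreconditioner} is being applied, so that the ``inner'' preconditioner $(B_h^1)^{-1}$ really stands in for an inversion of $\widetilde A_{\mathds{V},0} = \widetilde D_h^1$ up to uniformly bounded constants.
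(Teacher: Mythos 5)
Your proposal is correct and follows exactly the route of the paper, whose proof is precisely the combination of Lemma~\ref{lemma:ppreconditioner_stable_splitting} (stability of the outer $hp$-splitting, transferred to the bilinear form via the norm equivalence $\left<\widetilde{D}u,u\right>_{\Gamma}\sim\left\|u\right\|_{\widetilde{H}^{1/2}(\Gamma)}^2$), Proposition~\ref{thm:hpreconditioner} (stability of the inner local multilevel splitting of $\mathds{V}_h^1$), and Lemma~\ref{lemma_replace_global_space} (composition of the two decompositions). The bookkeeping you spell out — identifying $\mathds{V}_0=\mathds{V}_h^1$ with exact solver $\widetilde{D}_h^1$ as the space being re-split — is exactly the detail the paper leaves implicit.
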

 
\subsection{Spectrally equivalent local solvers}
\label{section_inexact_locals}
For each vertex patch, we need to store the dense matrix $\left(  \widetilde{D}^p_{h,\boldsymbol{z}}\right)^{-1}$.
For higher polynomial orders, storing these blocks is a significant part of the memory consumption of the 
preconditioner. To reduce these costs, we can make use of the fact that the abstract additive Schwarz theory 
allows us to replace the local bilinear forms $a(R^T_i u_i,R^T_i v_i)$ with spectrally equivalent forms, 
as long as they satisfy the conditions stated in Proposition~\ref{thm_asm_estimate}.
This is for example the case, if the decomposition is stable for the exact local solvers and if there exist constants 
$c_1$, $c_2 > 0$ such that
\begin{align*}
  c_1 \, \widetilde{a}_i(u_i,u_i) &\leq a(R^T_i u_i,R^T_i u_i) \leq c_2 \widetilde{a}_i(u_i,u_i) \quad \forall u_i \in \mathds{V}_i.
\end{align*}
The new preconditioner will be based on a finite number of reference patches, for which the Galerkin matrix has to be inverted.

First we prove the simple fact that we can drop the stabilization term from (\ref{eq:weakform}) when assembling the local bilinear forms:
\begin{lemma}
  There exists a constant $c_1>0$ that depends only on $\Gamma$ and the $\gamma$-shape regularity of $\mathcal{T}$
  such that for any vertex patch $\omega_{\boldsymbol{z}}$ the following estimates
  hold:
  \begin{align*}
    \left<Du,u \right>_{\Gamma} &\leq \left<\widetilde{D} u ,u \right>_{\Gamma} \leq c_1 \left<Du,u \right>_{\Gamma} \qquad \forall u \in \mathds{V}^p_{\boldsymbol{z}}.
  \end{align*}
  \begin{proof}
    The first estimate is trivial, as $\widetilde{D}$ only adds an additional non-negative term.
    For the second inequality, we note that the functions in $\mathds{V}^p_{\boldsymbol{z}}$ all vanish outside of $\omega_{\boldsymbol{z}}$
    and therefore $\mathds{V}^p_{\boldsymbol{z}} \cap \operatorname{ker}(D) = \{0\}$.
    We transform to the reference patch, use the fact that $\widehat{D}$ is elliptic on $\widetilde{H}^{1/2}(\widehat{\omega}_{\boldsymbol{z}})$,
    and transform back by applying Lemma  \ref{item:lemma:reference-patch-hypsing_scaling}:
    \begin{align*}
      \left\|u\right\|_{L^2(\omega_{\boldsymbol{z}})}^2 &\lesssim h_{\boldsymbol{z}}^2 \left\|\widehat{u}\right\|_{L^2\left(\widehat{\omega}_{\boldsymbol{z}}\right)}^2 
      \lesssim h_{\boldsymbol{z}}^2 \left\|\widehat{u}\right\|_{\widetilde{H}^{1/2}(\widehat{\omega}_{\boldsymbol{z}})}^2 
      \lesssim h_{\boldsymbol{z}}^2 \left<\widehat{D} \widehat{u},\widehat{u} \right>_{\widehat{\omega}_{\boldsymbol{z}}}  
      \lesssim h_{\boldsymbol{z}} \, \left<D u,u \right>_{\omega_{\boldsymbol{z}}}.
    \end{align*}
    Thus, we can simply estimate the stabilization:
    \begin{align*}
      \alpha^2 \left<u,\mathds{1} \right>_{\omega_{\boldsymbol{z}}}^2 &\leq \alpha^2 \left\|u\right\|^2_{L^2(\omega_{\boldsymbol{z}})} \left\|\mathds{1}\right\|^2_{L^2(\omega_{\boldsymbol{z}})} 
      \leq C \alpha^2 \left\|\mathds{1}\right\|_{L^2(\omega_{\boldsymbol{z}})}^2 \,  h_{\boldsymbol{z}} \; \left<Du,u \right>_{\Gamma}.
    \end{align*}
    This gives the full estimate with the constant
    $c_1:= \max\left(1,\alpha^2 \left\|\mathds{1}\right\|_{L^2(\omega_{\boldsymbol{z}})}^2 \, C h_{\boldsymbol{z}}\right) 
      \leq \max\left(1, C \alpha^2 h_{\boldsymbol{z}}^3 \right)$.
  \end{proof}
\end{lemma}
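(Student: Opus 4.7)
The first inequality is immediate because $\left<\widetilde{D} u,u \right>_{\Gamma} = \left<Du,u \right>_{\Gamma} + \alpha^2 \left<u,1 \right>_{\Gamma}^2$ and the stabilization contribution is non-negative. All of the work lies in the second inequality, which amounts to absorbing the term $\alpha^2 \left<u,1 \right>_{\Gamma}^2$ into $(c_1-1)\left<Du,u \right>_{\Gamma}$ with a constant that depends only on $\Gamma$ and the shape regularity. The key point is that we are working on a local space $\mathds{V}^p_{\boldsymbol{z}}$ of functions with support in $\overline{\omega_{\boldsymbol{z}}}$, so $D$ is even fully elliptic (not merely semi-elliptic) on these functions and we can afford the crude estimate suggested by Cauchy--Schwarz.

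My plan is to split off the stabilization by Cauchy--Schwarz and then control the resulting $L^2$-norm by $\left<Du,u \right>_{\Gamma}$ using the reference-patch scaling of Lemma~\ref{item:lemma:reference-patch-hypsing_scaling} combined with the ellipticity of $\widehat D$. Since $u \in \mathds{V}^p_{\boldsymbol{z}}$ is supported in $\overline{\omega_{\boldsymbol{z}}}$, Cauchy--Schwarz immediately gives
\begin{align*}
\alpha^2 \left<u,1 \right>_{\Gamma}^2 \;=\; \alpha^2 \left<u,1 \right>_{\omega_{\boldsymbol{z}}}^2 \;\leq\; \alpha^2 \left\|u\right\|_{L^2(\omega_{\boldsymbol{z}})}^2 \, |\omega_{\boldsymbol{z}}|,
\end{align*}
so it suffices to bound $\left\|u\right\|_{L^2(\omega_{\boldsymbol{z}})}^2$ by $\left<Du,u \right>_{\Gamma}$ with the right power of $h_{\boldsymbol{z}}$.

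For the $L^2$-control I would pass to the reference patch $\widehat\omega_{\boldsymbol{z}}$ via $F_{\boldsymbol{z}}$. Standard affine scaling gives $\left\|u\right\|_{L^2(\omega_{\boldsymbol{z}})}^2 \sim h_{\boldsymbol{z}}^2 \left\|\widehat u\right\|_{L^2(\widehat\omega_{\boldsymbol{z}})}^2$; the continuous embedding $\widetilde{H}^{1/2}(\widehat\omega_{\boldsymbol{z}}) \hookrightarrow L^2(\widehat\omega_{\boldsymbol{z}})$ yields $\left\|\widehat u\right\|_{L^2(\widehat\omega_{\boldsymbol{z}})}^2 \lesssim \left\|\widehat u\right\|_{\widetilde{H}^{1/2}(\widehat\omega_{\boldsymbol{z}})}^2$; the ellipticity of $\widehat D$ on the (finitely many) reference configurations gives $\left\|\widehat u\right\|_{\widetilde{H}^{1/2}(\widehat\omega_{\boldsymbol{z}})}^2 \lesssim \left<\widehat D \widehat u, \widehat u \right>_{\widehat\omega_{\boldsymbol{z}}}$; and Lemma~\ref{item:lemma:reference-patch-hypsing_scaling} transfers this back to $\Gamma$ as $\left<\widehat D \widehat u, \widehat u \right>_{\widehat\omega_{\boldsymbol{z}}} \sim h_{\boldsymbol{z}}^{-1} \left<Du, u \right>_{\omega_{\boldsymbol{z}}}$. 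Chaining these four estimates produces $\left\|u\right\|_{L^2(\omega_{\boldsymbol{z}})}^2 \lesssim h_{\boldsymbol{z}} \left<Du, u \right>_{\Gamma}$.

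Combining with $|\omega_{\boldsymbol{z}}| \lesssim h_{\boldsymbol{z}}^2$ from shape regularity and the trivial bound $h_{\boldsymbol{z}} \leq \operatorname{diam}(\Gamma)$ yields $\alpha^2 \left<u,1 \right>_{\Gamma}^2 \lesssim \alpha^2 h_{\boldsymbol{z}}^3 \left<Du, u \right>_{\Gamma}$, with a hidden constant depending only on $\Gamma$ and $\gamma$. Adding $\left<Du, u \right>_{\Gamma}$ on both sides then gives the second inequality with the asserted form of $c_1$. I do not expect any serious obstacle here; the only mild subtlety is to verify that the patch-dependent factor $h_{\boldsymbol{z}}^3$ is uniformly bounded by the geometry, so that the final $c_1$ truly depends only on $\Gamma$ (and the fixed parameter $\alpha$, which we view as part of the data of the continuous problem) and on the shape-regularity constant $\gamma$.
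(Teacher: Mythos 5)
Your proposal is correct and follows essentially the same route as the paper's proof: Cauchy--Schwarz on the stabilization term, then the chain $\left\|u\right\|_{L^2(\omega_{\boldsymbol{z}})}^2 \lesssim h_{\boldsymbol{z}}^2\left\|\widehat u\right\|_{L^2(\widehat\omega_{\boldsymbol{z}})}^2 \lesssim h_{\boldsymbol{z}}^2\left\|\widehat u\right\|_{\widetilde H^{1/2}(\widehat\omega_{\boldsymbol{z}})}^2 \lesssim h_{\boldsymbol{z}}^2\left<\widehat D\widehat u,\widehat u\right>_{\widehat\omega_{\boldsymbol{z}}} \lesssim h_{\boldsymbol{z}}\left<Du,u\right>_{\omega_{\boldsymbol{z}}}$ via the reference-patch scaling lemma. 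Your explicit remark that $h_{\boldsymbol{z}} \lesssim \operatorname{diam}(\Gamma)$ makes the uniformity of $c_1$ slightly more transparent than the paper's phrasing, but the argument is the same.
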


\begin{remark}
  The proof of the previous lemma shows that this modification does not significantly affect the stability of the preconditioner and its effect will even vanish with
  $h$-refinement.
\hbox{}\hfill\rule{0.8ex}{0.8ex}
\end{remark}

We are now able to define the new local bilinear forms as:
\begin{definition}
\label{definition_perturbed_local_forms}
  Take $\boldsymbol{z} \in {\mathcal{V}}$ and let $F_{\boldsymbol{z}}: \widehat{\omega}_{\boldsymbol{z}} \to \omega_{\boldsymbol{z}}$ be the pullback mapping to the reference patch
  as in Definition~\ref{definition:reference-patches}. Set
  \begin{align*}
    \widetilde{a}_{\boldsymbol{z}}(u,v):=h_{\boldsymbol{z}}
    \left<\widehat{D} \left( \;u \circ F_{\boldsymbol{z}}\right), v \circ F_{\boldsymbol{z}} \right>_{\widehat{\omega}_{\boldsymbol{z}}} 
    \qquad \forall u,v \in \mathds{V}^p_{\boldsymbol{z}}.
  \end{align*}
  (see Lemma~\ref{item:lemma:reference-patch-hypsing_scaling} for the definition of $\widehat{D}$).
  We denote the Galerkin matrix corresponding the bilinear form $\widetilde{a}_{\boldsymbol{z}}$ on the reference patch by $\widehat{D}_{h,\operatorname{ref}(\boldsymbol{z})}^p$.
\end{definition}

The above definition only needs to evaluate $\left<\widehat{D} \widehat{u},\widehat{v} \right>_{\Gamma}$ on the reference patch.
Since the reference patch depends only on the number of elements belonging to the patch,
the number of blocks that need to be stored, depends only on the shape regularity and is independent of the number of vertices in the triangulation $\mathcal{T}$.

\begin{theorem}  
\label{thm:hp_reference_solver_preconditioner}
  Assume that $\mathcal{T}$ is generated from a regular and shape-regular initial triangulation $\mathcal{T}_0$ by successive application of NVB.
  The preconditioner using the local solvers from Definition \ref{definition_perturbed_local_forms} is optimal, i.e., for
  \begin{align*}
    B_3^{-1}:= R_h^T (B_h^1)^{-1} R_h  \; +
    \; \sum_{\boldsymbol{z} \in {\mathcal{V}}_L}{ h_{\boldsymbol{z}}^{-1} R^T_{\boldsymbol{z}} \left(\widehat{D}_{h,\operatorname{ref}(\boldsymbol{z})}^p\right)^{-1}} R_{\boldsymbol{z}},
  \end{align*}
  the condition number of the preconditioned system satisfies
  \begin{align*}
    \kappa(B_{3}^{-1} \tilde{D}_h^p) \leq C,
  \end{align*}
  where $C > 0$ depends only on $\Gamma$, $\mathcal{T}_0$ and the use of NVB refinement.
  It is in particular independent of $h$ and $p$.
  \begin{proof}
    The scaling properties of $\left<D u,u \right>_{\Gamma}$ were stated in Lemma~\ref{item:lemma:reference-patch-hypsing_scaling}.
    Therefore, we can conclude the argument by using the standard additive Schwarz theory. 
  \end{proof}
\end{theorem}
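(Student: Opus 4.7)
The plan is to reduce the claim to Theorem~\ref{thm:p_precond_with_multilevel} via the spectral equivalence of the inexact local solvers $\widetilde{a}_{\boldsymbol{z}}$ with the exact ones $\langle \widetilde{D}\cdot,\cdot\rangle_{\Gamma}$ restricted to $\mathds{V}^p_{\boldsymbol{z}}$. Once uniform spectral equivalence is established, the abstract additive Schwarz machinery (Proposition~\ref{thm_asm_estimate}) together with the stable decomposition already used in Theorem~\ref{thm:p_precond_with_multilevel} yields the result, because the stability constants $C_0$, $C_1$ only pick up the spectral-equivalence factors.

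The first step is to identify the matrix $h_{\boldsymbol{z}}^{-1}(\widehat{D}^p_{h,\operatorname{ref}(\boldsymbol{z})})^{-1}$ as the inverse of the Galerkin matrix of $\widetilde{a}_{\boldsymbol{z}}$ with respect to the chosen basis of $\mathds{V}^p_{\boldsymbol{z}}$. By Definition~\ref{definition_perturbed_local_forms} this matrix is exactly $h_{\boldsymbol{z}}\,\widehat{D}^p_{h,\operatorname{ref}(\boldsymbol{z})}$, so the local solver used in $B_3^{-1}$ is $\widetilde{A}_{\boldsymbol{z}}^{-1}$ for $\widetilde{a}_{\boldsymbol{z}}$, as required by the abstract framework of Section~\ref{section_abstract_asm}.

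The second step is the uniform spectral equivalence $\widetilde{a}_{\boldsymbol{z}}(u,u) \sim \langle \widetilde{D} u, u\rangle_\Gamma$ for all $u \in \mathds{V}^p_{\boldsymbol{z}}$. By Lemma~\ref{item:lemma:reference-patch-hypsing_scaling} we have
\begin{equation*}
\widetilde{a}_{\boldsymbol{z}}(u,u) = h_{\boldsymbol{z}}\langle \widehat{D}\widehat{u},\widehat{u}\rangle_{\widehat{\omega}_{\boldsymbol{z}}} \sim \langle D u, u\rangle_{\omega_{\boldsymbol{z}}},
\end{equation*}
with constants depending only on $\Gamma$ and $\gamma$-shape regularity (there are only finitely many reference patches up to combinatorial type). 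The preceding lemma in Section~\ref{section_inexact_locals} then gives $\langle D u,u\rangle_{\omega_{\boldsymbol{z}}} \sim \langle \widetilde{D} u, u\rangle_\Gamma$ on $\mathds{V}^p_{\boldsymbol{z}}$, where the equivalence is uniform because the additive stabilization term is controlled by $h_{\boldsymbol{z}}^3 \langle Du,u\rangle_\Gamma$, and $h_{\boldsymbol{z}} \lesssim \operatorname{diam}(\Gamma)$ is bounded. Concatenating the two equivalences gives $\widetilde{a}_{\boldsymbol{z}}(u,u) \sim \langle \widetilde{D}u,u\rangle_\Gamma$ uniformly in $\boldsymbol{z}$, $h$, and $p$.

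The third step is to invoke Proposition~\ref{thm_asm_estimate} on the decomposition \eqref{eq:hpsplitting}. For the lower bound on $\lambda_{\min}$, take the decomposition produced by Theorem~\ref{thm:p_precond_with_multilevel} and bound the new local energies $\widetilde{a}_{\boldsymbol{z}}(u_{\boldsymbol{z}},u_{\boldsymbol{z}})$ by the exact ones $\langle \widetilde{D} u_{\boldsymbol{z}}, u_{\boldsymbol{z}}\rangle_\Gamma$ using the equivalence from step 2 (the lowest-order block is unchanged, so the LMLD analysis in Proposition~\ref{thm:hpreconditioner} transfers verbatim). For the upper bound on $\lambda_{\max}$, any decomposition $u = u_1 + \sum_{\boldsymbol{z}} u_{\boldsymbol{z}}$ satisfies, by spectral equivalence and the coloring argument of Lemma~\ref{lemma:ppreconditioner_stable_splitting}(b),
\begin{equation*}
\langle \widetilde{D} u, u\rangle_\Gamma \lesssim \langle \widetilde{D} u_1, u_1\rangle_\Gamma + \sum_{\boldsymbol{z} \in {\mathcal{V}}_L}\langle \widetilde{D} u_{\boldsymbol{z}}, u_{\boldsymbol{z}}\rangle_\Gamma \lesssim \sum_{\ell,\boldsymbol{z}}\widetilde{a}_{\ell,\boldsymbol{z}}(u_{\ell,\boldsymbol{z}},u_{\ell,\boldsymbol{z}}) + \sum_{\boldsymbol{z}}\widetilde{a}_{\boldsymbol{z}}(u_{\boldsymbol{z}},u_{\boldsymbol{z}}),
\end{equation*}
using Proposition~\ref{thm:hpreconditioner} for the multilevel sub-block. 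The only genuinely delicate point is confirming that the spectral-equivalence constants for $\widetilde{a}_{\boldsymbol{z}}$ are independent of $\boldsymbol{z}$; this is not a deep obstacle but hinges on (i) the finite number of reference patch combinatorial types, and (ii) the uniform control on the stabilization term on small patches, both already established. Combining the two bounds via Lemma~\ref{lemma_replace_global_space} concludes the proof.
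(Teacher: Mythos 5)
Your proposal is correct and follows essentially the same route as the paper: the paper's (very terse) proof likewise reduces the claim to the scaling equivalence of Lemma~\ref{item:lemma:reference-patch-hypsing_scaling} combined with the lemma on dropping the stabilization term, and then invokes the abstract additive Schwarz theory on the decomposition already analyzed in Theorem~\ref{thm:p_precond_with_multilevel}. You have simply written out the details (uniform spectral equivalence of the inexact local solvers, transfer of the $C_0$ and $C_1$ constants) that the paper leaves implicit.
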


\subsubsection{Numerical realization}

\begin{figure}[h!t]
\begin{center}
\includegraphics{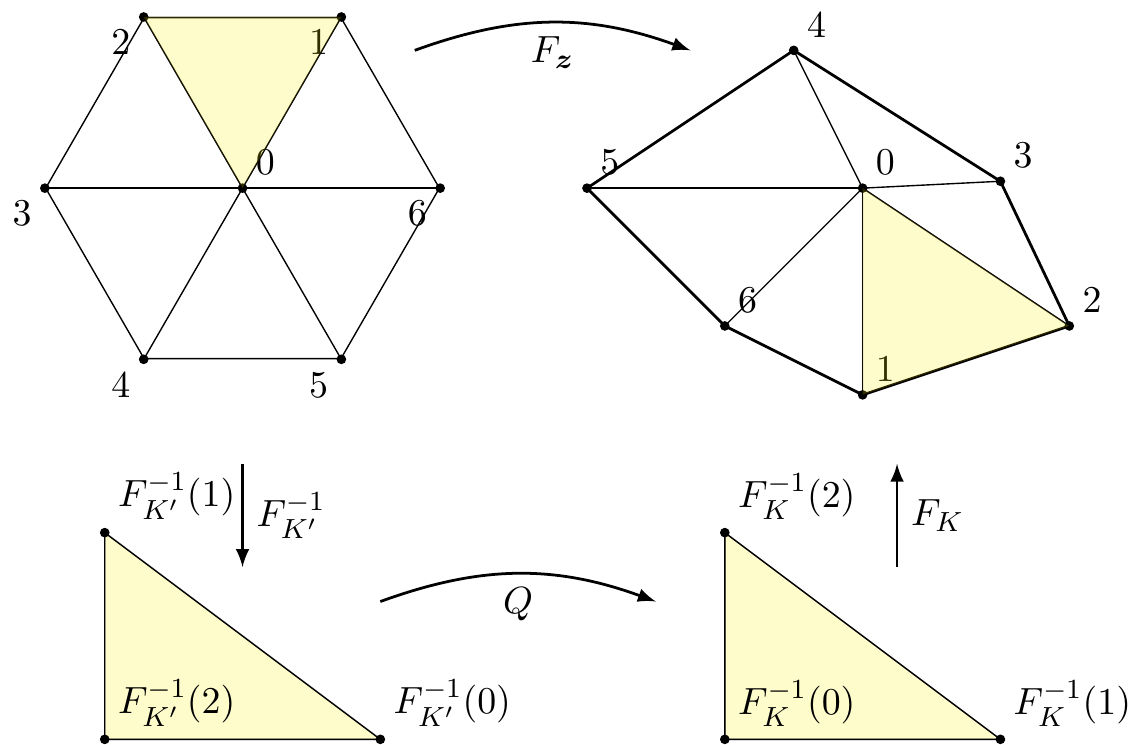}
\end{center}
\caption{The mapping to the reference patch described as a combination of element maps.}
\label{fig_patch_mapping}
\end{figure}

When implementing the preconditioner as defined above, it is important to note that for a basis function $\varphi_i$ on $\omega_{\boldsymbol{z}}$ 
the transformed function $\varphi_i \circ F_{\boldsymbol{z}}^{-1}$ does not necessarily correspond to the $i$-th basis function on 
$\widehat{\omega}_{\boldsymbol{z}}$. Depending on the chosen
basis we may run into orientation difficulties. This can be resolved in the following way:

Let $\boldsymbol{z} \in {\mathcal{V}}$ be fixed.
Choose a numbering for the vertices $\boldsymbol{z}_i$ and elements $K_i$ of $\omega_{\boldsymbol{z}}$ such that adjacent elements have adjacent numbers (for example, enumerate
clockwise or counter-clockwise). We also choose a similar enumeration on the reference patch and denote it as $\widehat{\boldsymbol{z}}_i$ and $\widehat{K}_i$.
The enumeration is such that the reference map $F_{\boldsymbol{z}}$ maps $\boldsymbol{z}_i$ to $\widehat{\boldsymbol{z}_i}$ and $K_i$ to $\widehat{K}_i$.
Let $N_{\boldsymbol{z}}$ be the number of vertices in the patch.

For elements $K \subset \omega_{\boldsymbol{z}}$ and $K' \subset \widehat{\omega}_{\boldsymbol{z}}$, the bases on $\omega_{\boldsymbol{z}}$ and on $\widehat{\omega}_{\boldsymbol{z}}$ are
locally defined by the pullback of polynomials on the reference triangle $\widehat{K}$.
We denote the element maps as $F_K: \widehat{K} \to K $ and $F_K': \widehat{K} \to K'$, respectively.
The basis functions are then given as $\varphi_j:=\widehat{\varphi}_j \circ F_K$ on $\omega_{\boldsymbol{z}}$ and
$\psi_j:=\widehat{\psi}_j \circ F_{K'}$ on $\widehat{\omega}_{\boldsymbol{z}}$.
Corresponding local element maps do not necessarily map the same vertices of the reference element $\widehat K$ to vertices 
with the same numbers in the local ordering. 
Hence, we need to introduce another map $Q: \widehat{K} \to \widehat{K}$ that represents a vertex permutation.
Then, we can write the patch-pullback restricted to $K'$ as $F_{\boldsymbol{z}}|_{K'}=F_K \circ Q \circ F_{K'}^{-1}$ (see Figure~\ref{fig_patch_mapping}).
We observe:
\begin{enumerate}[i)]
  \item For the hat function the mapping is trivial:
    $\varphi_{\boldsymbol{z}} \circ F_{\boldsymbol{z}}=\psi_{\widehat{\boldsymbol{z}}}$.
  \item For the edge basis, permuting the vertices on the reference element only changes the sign of the corresponding edge functions.
    Thus, we have $\varphi^{{\mathcal E}_m}_{j} \circ F_{\boldsymbol{z}}= (-1)^j \psi^{{\mathcal E}_m}_{j}$, if the orientation of the edge in the global triangulation does not
    match the orientation of the reference patch.    
  \item The inner basis functions transformation under $Q$ is not so simple. Since the basis functions all have support on a single
    element we can restrict our consideration to this element and assemble the necessary basis transformations for all 5 permutations of vertices on
    the reference triangle without losing the memory advantage of using the reference patch.  
\end{enumerate}

\begin{remark}
  One could also exploit the symmetry (up to a sign change) of the permutation of $\lambda_1$ and $\lambda_2$ in the definition of the
  inner basis functions to reduce the number of basis transformation matrices needed from 5 to 2. 
\hbox{}\hfill\rule{0.8ex}{0.8ex}
\end{remark}

\section{Numerical results}
\label{sec:numerics}
The following numerical experiments confirm that the proposed preconditioners
(Theorem~\ref{thm:ppreconditioner}, Theorem~\ref{thm:p_precond_with_multilevel},
and Theorem~\ref{thm:hp_reference_solver_preconditioner})
do indeed yield a system with a condition number that is bounded uniformly in $h$ and $p$, whereas the
condition number of the unpreconditioned system grows in $p$ with a rate slightly smaller than predicted in 
Corollary~\ref{thm:condition_numbers_D}: We observe numerically $\kappa \sim \mathcal{O}(p^{5.5})$.
Diagonal preconditioning appears to reduce the condition number to $\mathcal{O}(p^{2.5})$. 
All of the following experiments were performed using the BEM++ software library (\cite{bempp_preprint}; \url{www.bempp.org}) with the AHMED software library 
for $\mathcal{H}$-matrix compression, \cite{bebendorf:2008},  \cite{ahmed_homepage}.
We used the polynomial basis described in Section~\ref{sect:discretization}.

\begin{figure}[tb]
\includegraphics{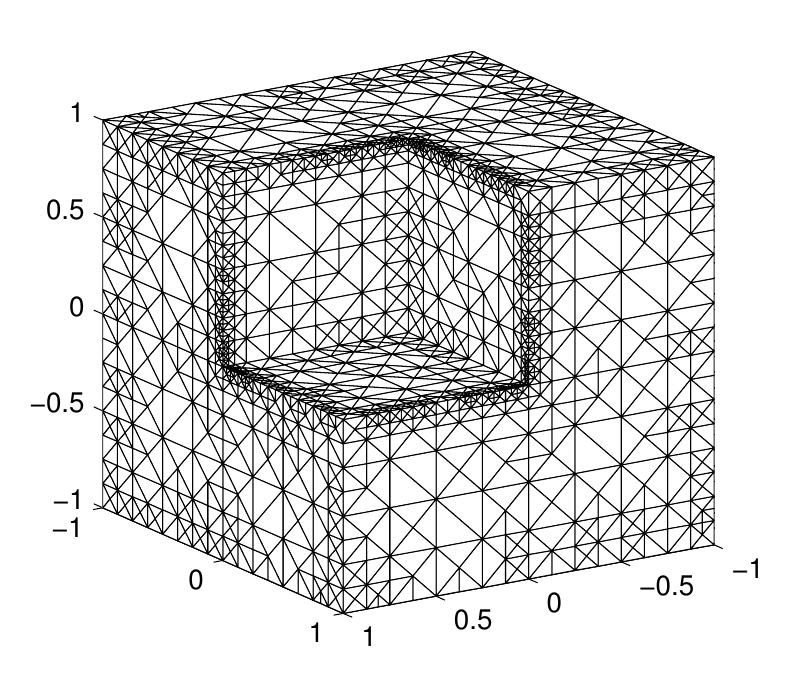}
\includegraphics{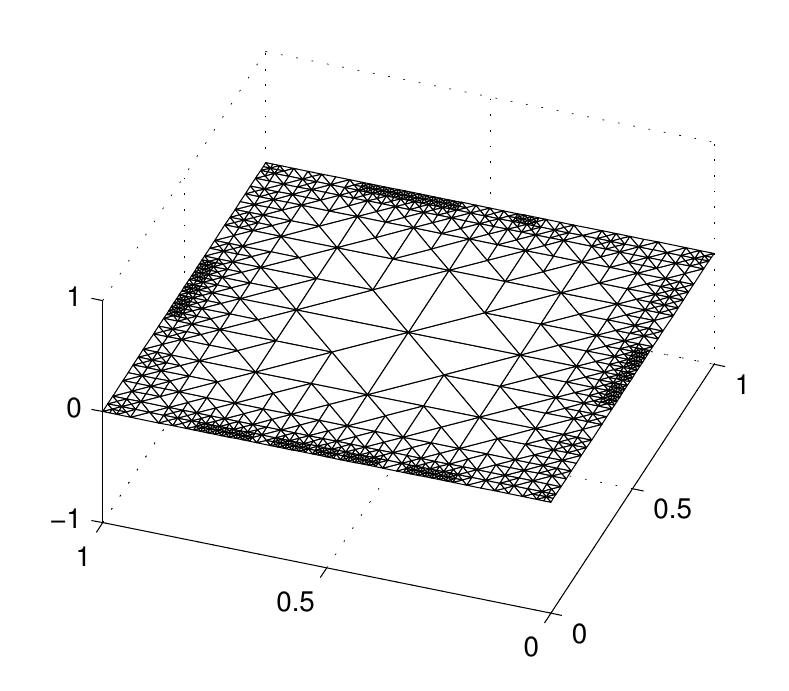}
\caption{Adaptive meshes on the Fichera cube and for a screen problem.}
\label{fig_meshes}
\end{figure}

\begin{example}[unpreconditioned $p$-dependence]
\label{example_unprec}
  We consider a quadratic screen in $\mathbb{R}^3$ (see Figure~\ref{fig_meshes}, right).
We study the $p$-dependence 
  of the unpreconditioned system on different uniformly refined meshes.  
  In accordance with the estimates of Corollary~\ref{thm:condition_numbers_D}, 
  Figure~\ref{fig:unpreconditioned_systems} shows that one has, depending on the mesh size $h$,  
  a preasymptotic phase in which the 
  $\mathcal{O}(h^{-1}p^2)$ term dominates, and an $h$-independent asymptotic 
  $\mathcal{O}(p^{5.5})$ behavior. The latter is slightly better than the prediction 
  of $\mathcal{O}(p^6)$ of Corollary~\ref{thm:condition_numbers_D}. 
\hbox{}\hfill\rule{0.8ex}{0.8ex}
\begin{figure}[h]
\includegraphics{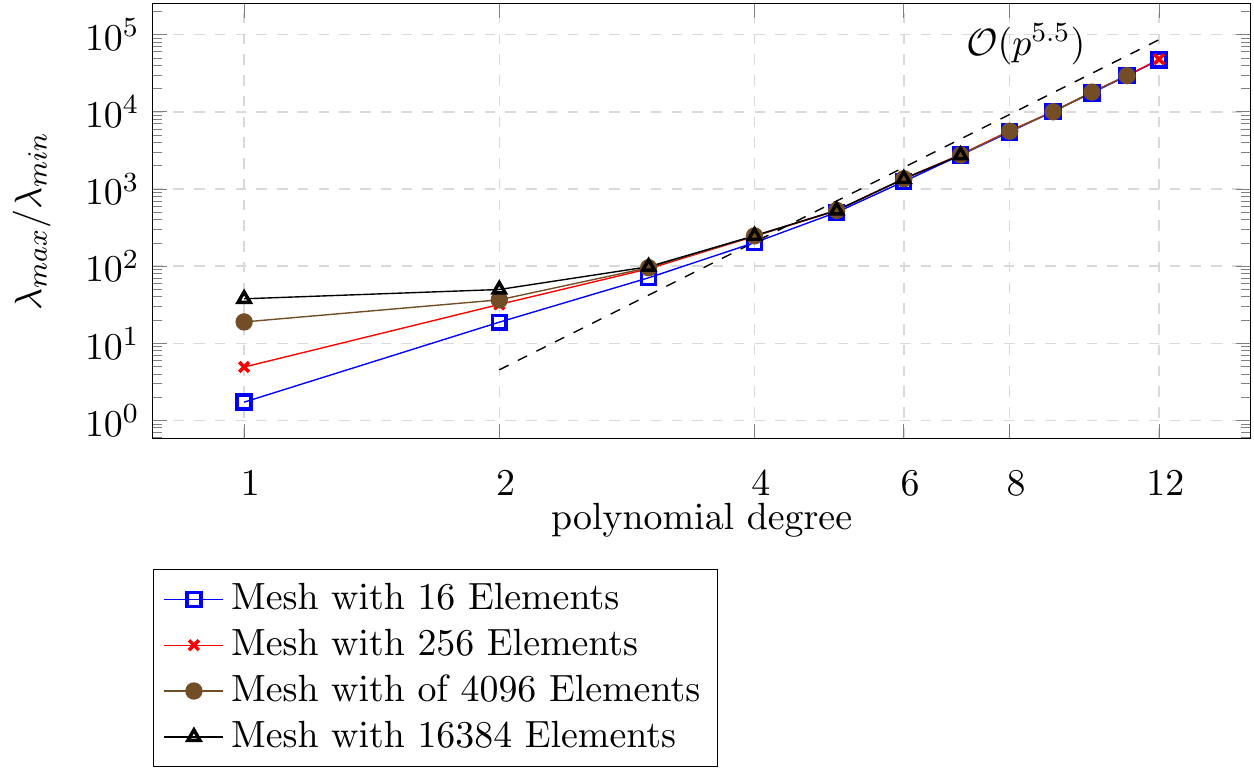}

\caption{Comparison of the condition number of $\widetilde{D}^p_h$ for the screen problem on different uniform meshes (Example~\ref{example_unprec})}
\label{fig:unpreconditioned_systems}
\end{figure}
\end{example}

\begin{example}[Fichera's cube]
\label{example_fichera_prec}
We compare the preconditioner that uses the local multilevel preconditioner for the $h$-part and the
inexact local solvers based on the reference patches to the unpreconditioned system and to simple diagonal
scaling.
We consider the problem on a closed surface, namely, the surface of the Fichera cube with side length $2$, and employ a stabilization parameter
$\alpha=0.2$.
To generate the adaptive meshes, we used NVB, where in each step, the set of marked elements originated from
a lowest order adaptive algorithm with a ZZ-type error estimator (as described in \cite{aff_hypsing}).
The left part of Figure~\ref{fig_meshes} shows an example of one of the meshes used.

Figure~\ref{fig_fichera_p} confirms that the condition number of the preconditioned system does not depend 
on the polynomial degree of the discretization. Figure~\ref{fig_fichera_h} confirms the robustness of the 
preconditioner with respect to the adaptive refinement level. 
The unpreconditioned and the diagonally preconditioned system do not show a bad behavior with respect to $h$, 
probably due to the already large condition number for $p > 1$.
\hbox{}\hfill\rule{0.8ex}{0.8ex}
\begin{figure}[htb]
\includegraphics{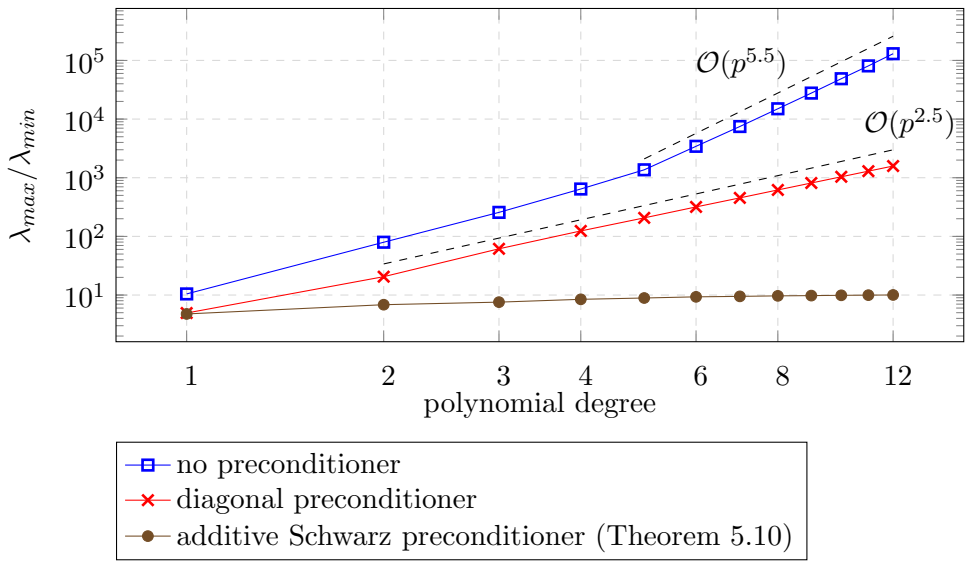}
\caption{Fichera cube, condition numbers for fixed uniform mesh with $70$ elements (Example~\ref{example_fichera_prec}).}
\label{fig_fichera_p}
\end{figure}
\begin{figure}[h!tb]
\includegraphics{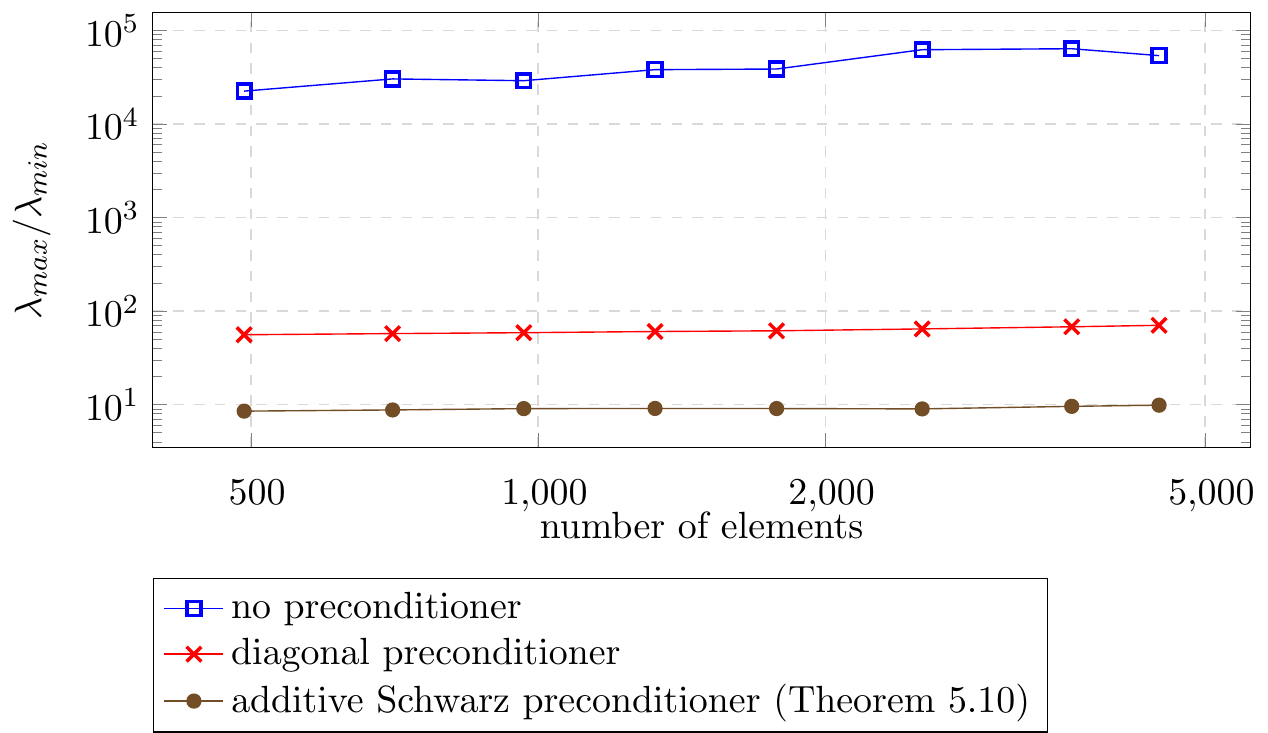}
\caption{Fichera cube, adaptive $h$-refinement for $p=3$ (Example~\ref{example_fichera_prec}).}
\label{fig_fichera_h}
\end{figure}
\end{example}

\begin{example}[screen problem]
\label{example_screen_prec}
We consider the screen problem in $\mathbb{R}^3$ with a quadratic screen of side length~1 (see Figure~\ref{fig_meshes},
  right),
  which    represents the case $\Gamma \neq \partial \Omega$ and $\alpha=0$ in~\eqref{eq:def_stabilized_op}, and
  perform the same experiments as we did for Fichera's cube in Example~\ref{example_fichera_prec}.
  In Figure~\ref{fig_screen_p} we again observe that the condition number is independent of the polynomial degree. 
  Figures~\ref{fig_screen_h}--\ref{fig_screen_h5} demonstrate the independence of the mesh size $h$.
\hbox{}\hfill\rule{0.8ex}{0.8ex}
\begin{figure}[htb]

\includegraphics{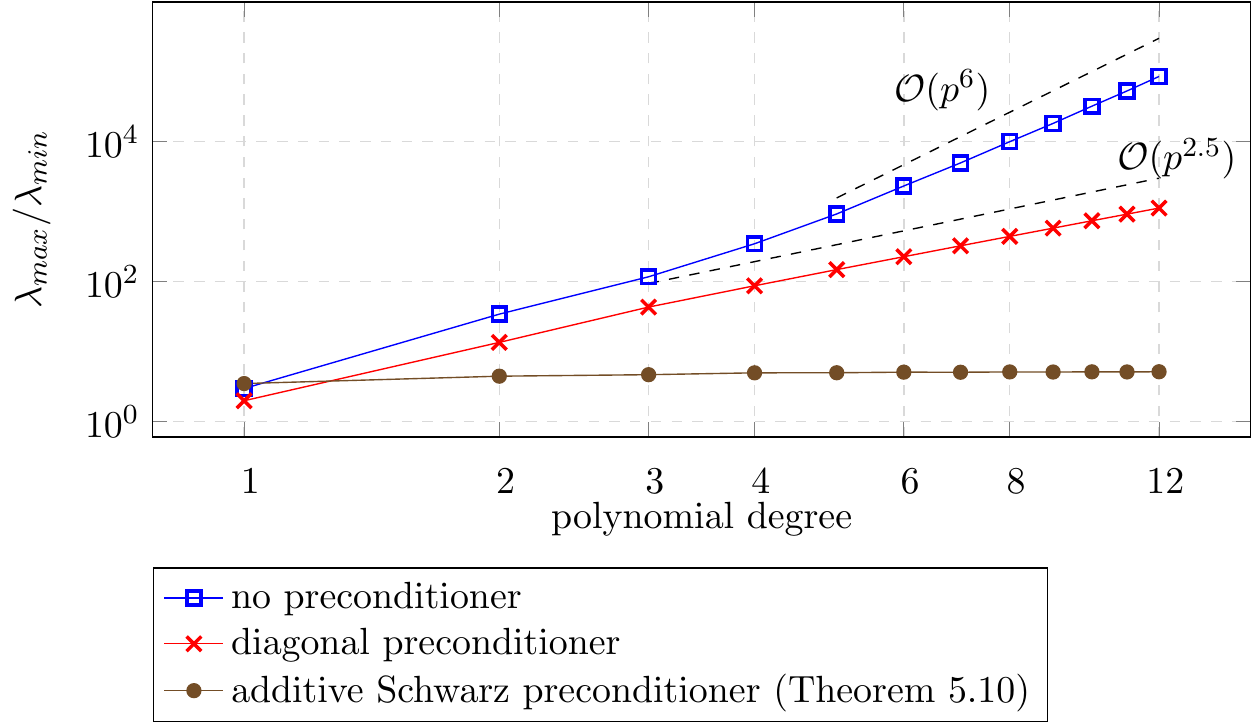}
\caption{Screen problem, condition numbers for uniform mesh with $45$ elements (Example~\ref{example_screen_prec}). }
\label{fig_screen_p}
\end{figure}

\begin{figure}[ht]
\includegraphics{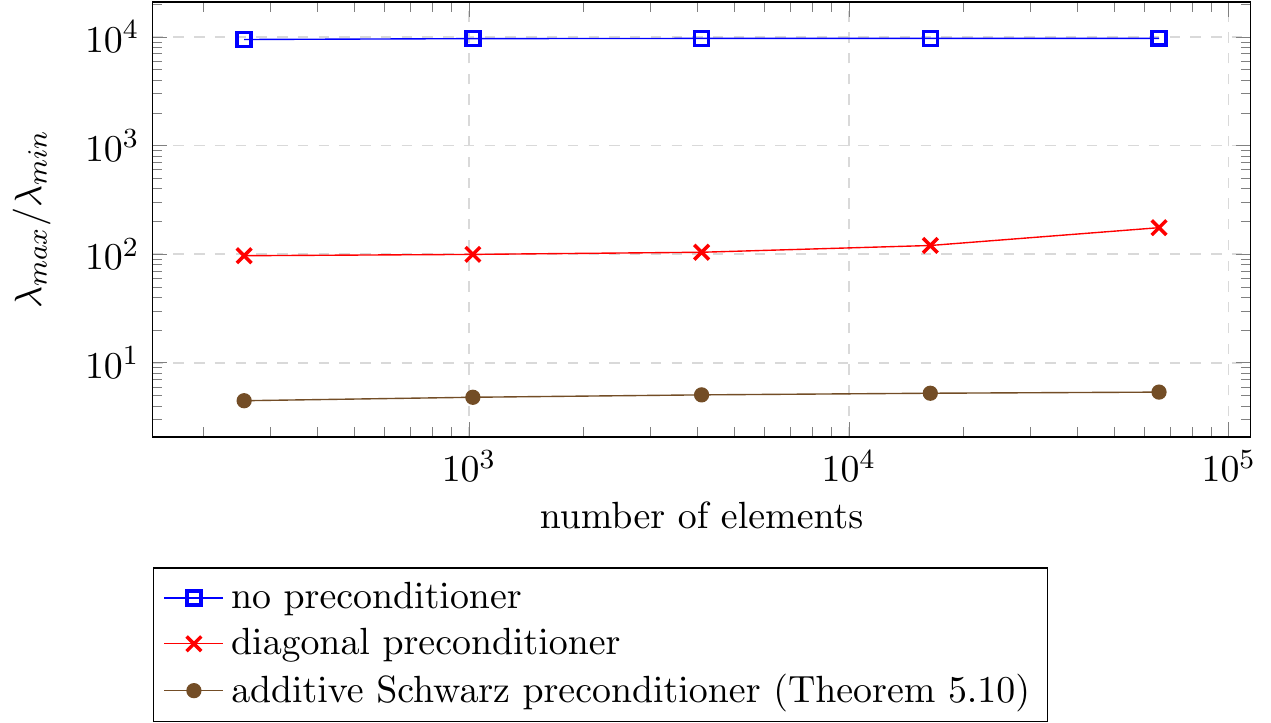}
\caption{Screen problem, uniform $h$-refinement for $p=4$ (Example~\ref{example_screen_prec}).}
\label{fig_screen_h}
\end{figure}

\begin{figure}[ht]
\includegraphics{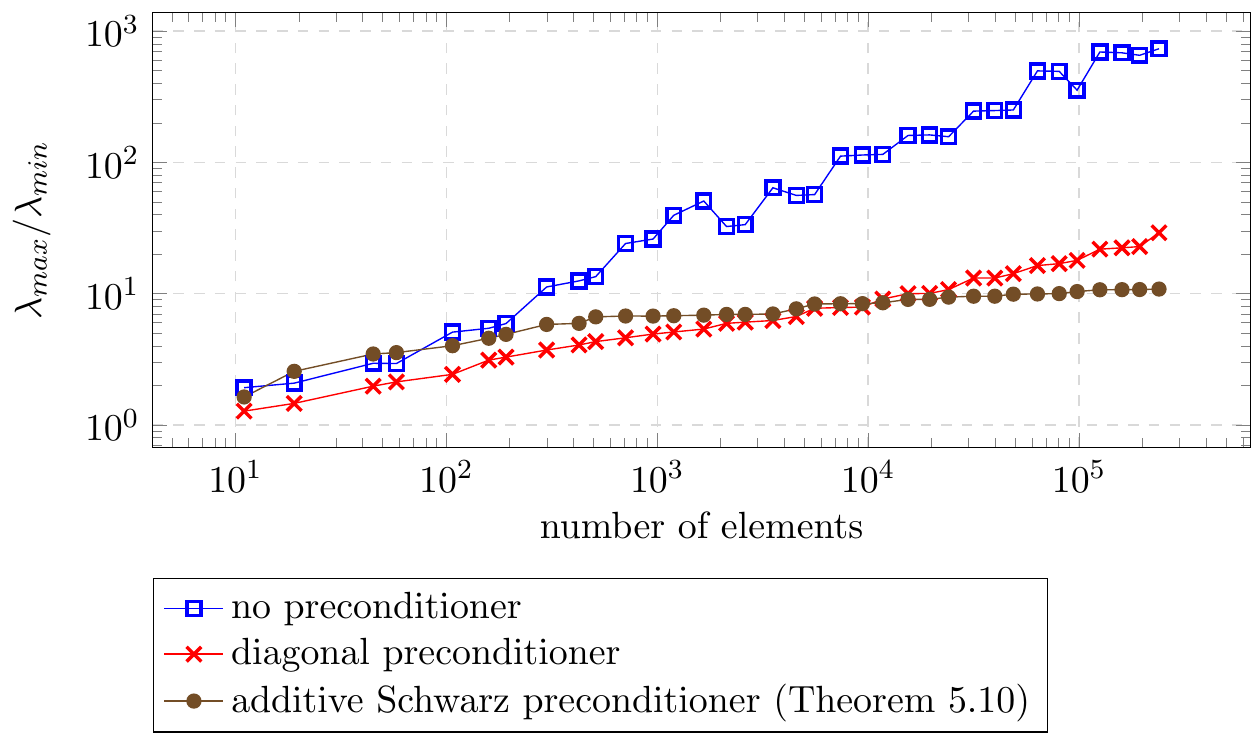}
\caption{Screen problem, adaptive $h$-refinement for $p=1$ (Example~\ref{example_screen_prec}).}
\label{fig_screen_h3}
\end{figure}

\begin{figure}[ht]
\includegraphics{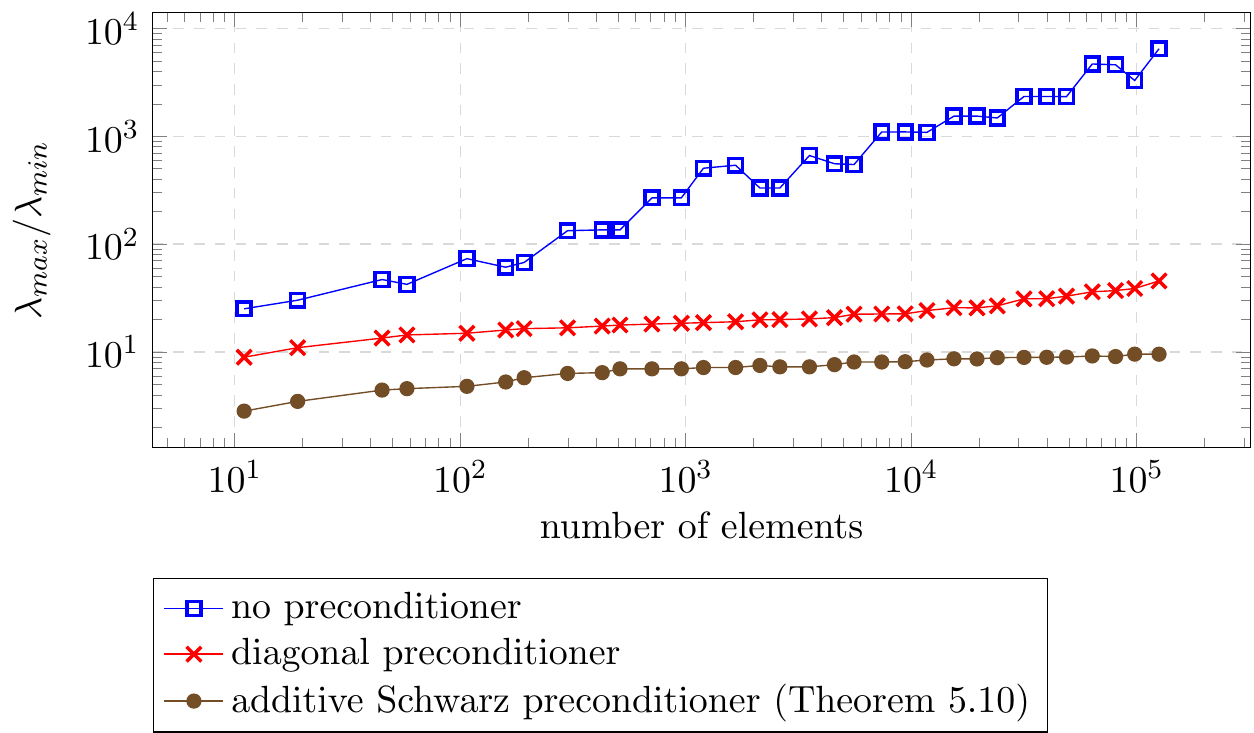}
\caption{Screen problem, adaptive $h$-refinement for $p=2$ (Example~\ref{example_screen_prec}).}
\label{fig_screen_h4}
\end{figure}

\begin{figure}[ht]
\includegraphics{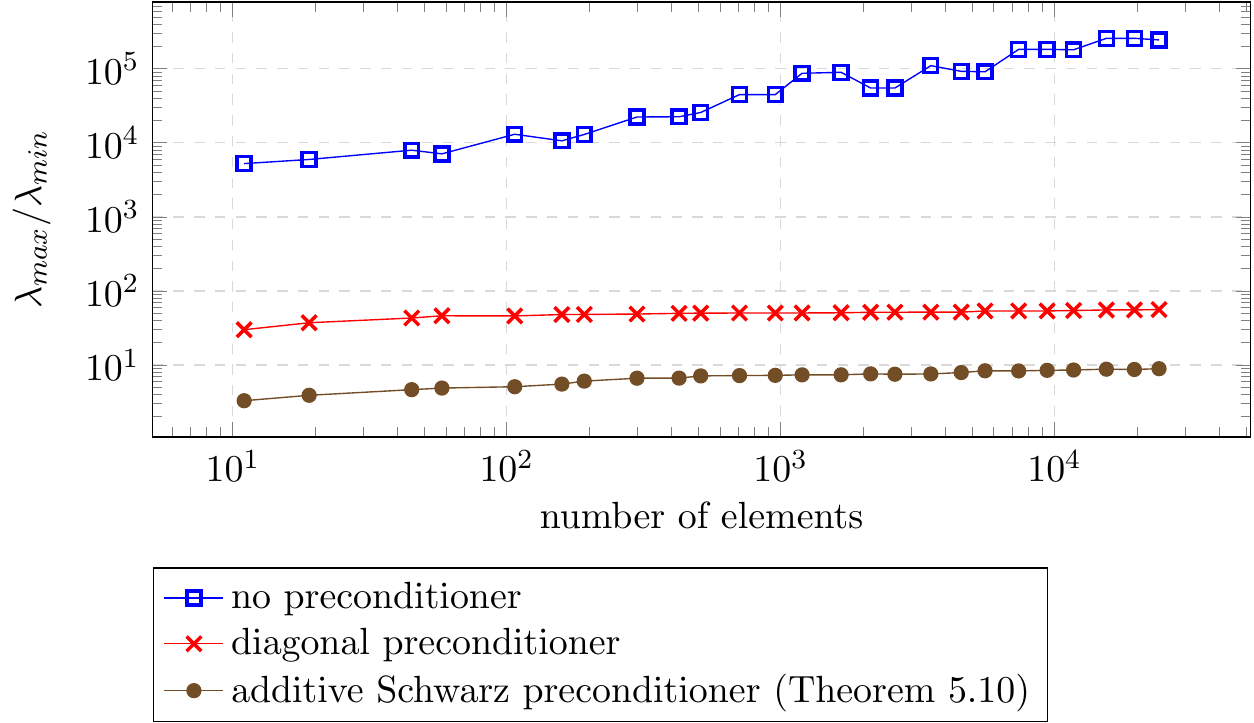}
\caption{Screen problem, adaptive $h$-refinement for $p=3$ (Example~\ref{example_screen_prec}).}
\label{fig_screen_h5}
\end{figure}
\end{example}

\afterpage{\clearpage}

\begin{example}[inexact local solvers]
  \label{example_prec_compare}
We compare the different preconditioners proposed in this paper.
  While the numerical experiments all show that the preconditioner is indeed robust in $h$ and $p$,
  the constant differs if we use the different simplifications
  described in the Sections~\ref{section_adaptive_meshes} and \ref{section_inexact_locals} to the preconditioner.
  In Figures~\ref{fig_compare_preconditioners_p} and \ref{fig_compare_preconditioners_h}, we can observe 
  the different constants for the geometry given by Fichera's cube of Example~\ref{example_fichera_prec}.
\hbox{}\hfill\rule{0.8ex}{0.8ex}
\begin{figure}[ht]
\includegraphics{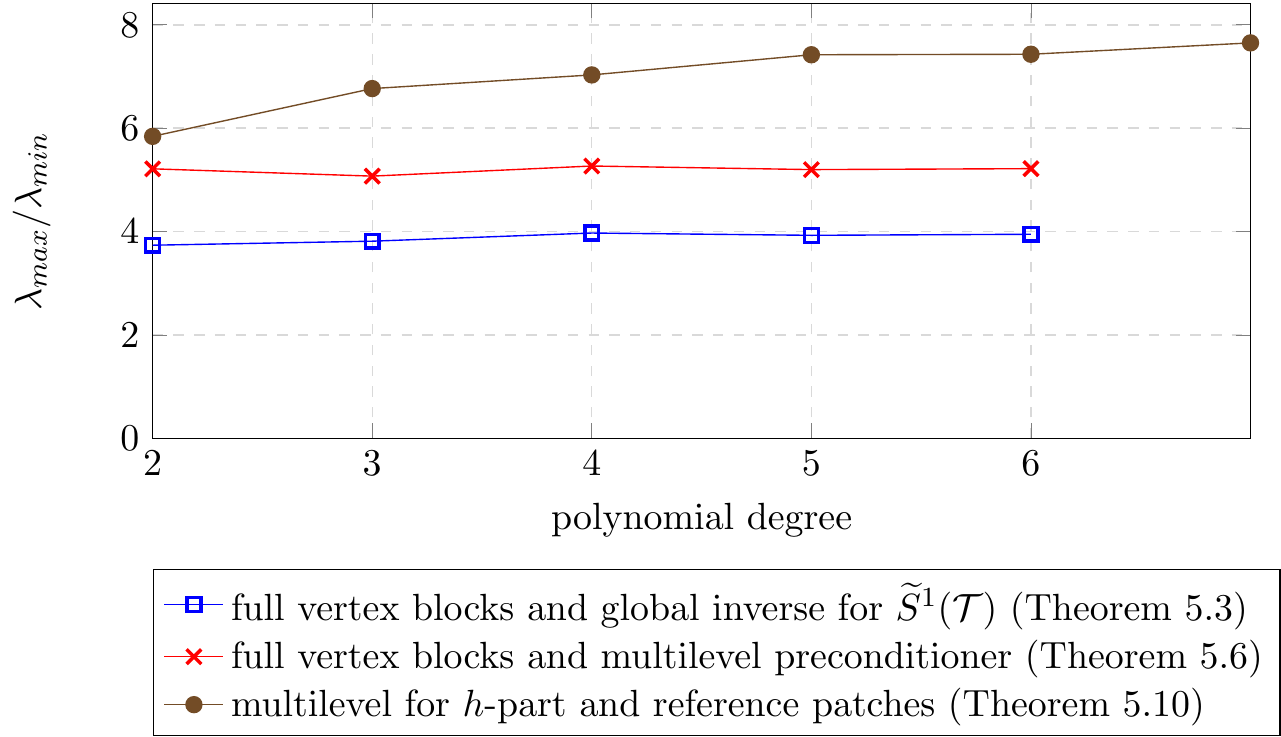}
\caption{Comparison of the different proposed preconditioners for a fixed uniform mesh with $70$ elements on the Fichera cube (Example~\ref{example_prec_compare}).}
\label{fig_compare_preconditioners_p}
\end{figure}

\begin{figure}[ht]
\includegraphics{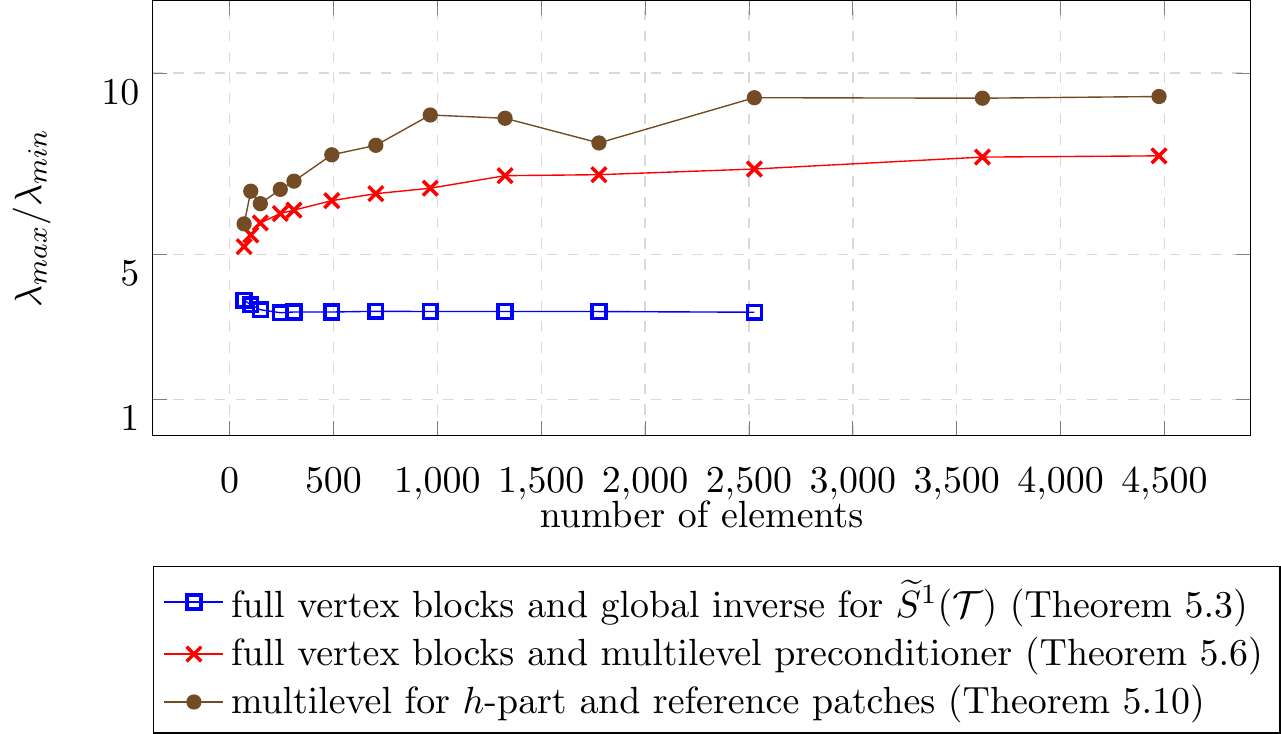}
\caption{Comparison of the different proposed preconditioners for adaptive mesh refinement on the Fichera cube with $p=2$.}
\label{fig_compare_preconditioners_h}
\end{figure}
\end{example}

\clearpage

\begin{example}[inexact local solvers]
\label{example_memory}
We continue with the geometry of Example~\ref{example_prec_compare}, i.e., Fichera's cube. 
We motivated Section~\ref{section_inexact_locals} by stating the large memory requirement of the preconditioner when storing the dense local block inverses. 
It can be seen in Table~\ref{table_memory_requirement} that the reference 
patch based preconditioner resolves this issue: we present the memory requirements for the various approaches when 
excluding the memory requirement for the treatment of the lowest order space $\mathds{V}_h^1$.  
For comparison, we included the storage requirements for the full matrix $\widetilde{D}_h^p$ 
and the $\mathcal{H}$-matrix approximation with accuracy $10^{-8}$ which is denoted as $D^{p,\mathcal{H}}_h$.
While we still get linear growth in the number of elements, due to some bookkeeping requirements, 
such as element orientation etc., which could theoretically also be avoided,
we observe a much reduced storage cost.
For $p=3$ and $55,298$ degrees of freedom, the memory requirement is less than $2.5\%$ of the full block storage. 
For $p=4$ and $393,218$ degrees of freedom the memory requirement is just $0.6\%$ and for higher polynomial orders, this ratio would become even smaller.
Comparing only the number of blocks that need to be stored,
we see that in this particular geometry we only need to store the inverse for $6$ reference blocks.
\hbox{}\hfill\rule{0.8ex}{0.8ex}

\begin{table}[h!]
  \begin{center}
                                                                                                                                \clearpage{}\begin {tabular}{SSSSSS}\toprule \multicolumn {1}{c}{$p$}&\multicolumn {1}{c}{$N_{dof}$}&\multicolumn {1}{c}{$\operatorname {mem}\left (\widetilde {D}^p_h\right )/{N_{dof}}$}&\multicolumn {1}{c}{$\operatorname {mem}\left (\widetilde {D}^{p,\mathcal {H}}_h\right )/N_{dof}$}&\multicolumn {1}{c}{$\operatorname {mem}\left (B^{-1}\right )/N_{dof}$}&\multicolumn {1}{c}{$\operatorname {mem}\left (B_{3}^{-1}\right )/N_{dof}$}\\& & [KB] & [KB] & [KB] & [KB] \\ \midrule 2&98&0.76562&0.76945&0.094547&0.039222\\%
2&298&2.3281&2.3334&0.098259&0.027318\\%
2&986&7.7031&7.1907&0.10025&0.020522\\%
2&3558&27.797&15.006&0.10068&0.018394\\%
2&8950&69.922&19.817&0.10078&0.017902\\%
\toprule
3&218&1.7031&1.7103&0.31314&0.083787\\%
3&668&5.2188&5.1523&0.32508&0.041261\\%
3&2216&17.312&10.921&0.332&0.017895\\%
3&5969&46.633&16.452&0.3343&0.011556\\%
3&16310&127.42&22.738&0.33274&0.0091824\\%
3&20135&157.3&24.04&0.33372&0.0089222\\%
\toprule
4&386&3.0156&3.0197&0.67086&0.16973\\%
4&1954&15.266&10.879&0.70233&0.04829\\%
4&5634&44.016&17.39&0.71085&0.019619\\%
4&14226&111.14&21.209&0.71364&0.010424\\%
4&35794&279.64&27.596&0.71428&0.0067908\\%
\toprule
5&602&4.7031&4.7083&1.1694&0.29324\\%
5&1852&14.469&14.476&1.2122&0.12945\\%
5&8802&68.766&68.772&1.2384&0.029459\\%
5&16577&129.51&129.4&1.2468&0.016961\\%
5&45302&353.92&353.79&1.2404&0.0079898\\%
5&55927&436.93&436.78&1.2443&0.0070062\\\bottomrule \end {tabular}\clearpage{}
  \end{center}
  \caption{Comparison of the memory requirement relative to the number of degrees of freedom~$N_{dof}$     between storing the full block structure and the reference block based preconditioner from Section~\ref{section_inexact_locals} (Example~\ref{example_memory}).}  
  \label{table_memory_requirement}
\end{table}
\end{example}

 \clearpage
\textbf{Acknowledgments:} The research was supported by the Austrian Science Fund (FWF) through the 
doctoral school ``Dissipation and Dispersion in Nonlinear PDEs'' (project W1245, A.R.) 
and ``Optimal Adaptivity for BEM and FEM-BEM Coupling'' (project P27005, T.F, D.P.). 
T.F. furthermore acknowledges funding through the Innovative Projects Initiative of 
Vienna University of Technology and the CONICYT project ``Preconditioned linear solvers
for nonconforming boundary elements'' (grant FONDECYT 3150012).

\def\appendixname{Appendix}
\appendix
\section*{Appendix}
From \cite[Prop.~{2.8}]{melenk_gl} for the stiffness matrix and from classical estimates for the quadrature weights of the Gau{\ss}-Lobatto quadrature, we have on the reference square $\widehat{S}$ 
\begin{align*} 
p^{-2} \|{\mathfrak u}\|^2_{\ell^2} \lesssim \|u\|^2_{H^1(\widehat S)} \lesssim p \|{\mathfrak u}\|^2_{\ell^2}, 
\qquad 
p^{-4} \|{\mathfrak u}\|^2_{\ell^2} \lesssim \|u\|^2_{L^2(\widehat S)} \lesssim p^{-2} \|{\mathfrak u}\|^2_{\ell^2} 
\qquad \forall u \in {\mathcal Q}^p(\widehat S)
\end{align*}
For quasi-uniform meshes, we therefore obtain 
\begin{align*}
h^{-2} p^2 \|u\|^2_{L^2(\Gamma)} & \lesssim \|{\mathfrak u}\|^2_{\ell^2} \lesssim h^{-2} p^4 \|u\|^2_{L^2(\Gamma)}.
\end{align*}
Furthermore, we have 
\begin{align*}
\|{\mathfrak J}_h {\mathfrak u}\|^2_{\ell^2} &\lesssim h^{-2} \|J_h u\|^2_{L^2(\Gamma)} 
\lesssim h^{-2} \|u\|^2_{L^2(\Gamma)} \lesssim  p^{-2} \|{\mathfrak u}\|^2_{\ell^2},\\
\|{\mathfrak u} - {\mathfrak J}_h {\mathfrak u}\|^2_{\ell^2} &\lesssim 
\sum_{K \in {\mathcal T}} \|({\mathfrak u} - {\mathfrak J}_h{\mathfrak u})|_K)\|^2_{\ell^2} 
\lesssim p^2 \sum_{K \in {\mathcal T}} \|\widehat u - \widehat {J_h u}\|^2_{H^1(\widehat S)}
\lesssim p^2 \sum_{K \in {\mathcal T}} | u - J_h u|^2_{H^1(K)} + h^{-2} \|u - J_h u\|^2_{L^2(K)} \\
&\lesssim p^2 \|u\|^2_{H^1(\Gamma)}.
\end{align*}
We obtain 
\begin{align*}
\|u\|^2_{L^2(\Gamma)} \lesssim h^2 p^{-2} \|{\mathfrak u}\|^2_{\ell^2}, 
\qquad  
\|u\|^2_{H^1(\Gamma)}  = \|u\|^2_{L^2(\Gamma)} + |u|^2_{H^1(\Gamma)} \lesssim 
h^2 p^{-2} \|{\mathfrak u}\|^2_{\ell^2} + p \|{\mathfrak u}\|^2_{\ell^2} 
\lesssim p \|{\mathfrak u}\|^2_{\ell^2},
\end{align*}
so that interpolation yields 
$$
\|u\|^2_{\widetilde H^{1/2}(\Gamma)}  \lesssim h p^{-1/2} \|{\mathfrak u}\|^2_{\ell^2}. 
$$
For the converse estimate, we observe 
\begin{align*}
\|{\mathfrak u} - {\mathfrak J}_h {\mathfrak u}\|^2_{\ell^2} \lesssim p^2 \|u\|^2_{H^1(\Gamma)}, 
\qquad \qquad 
\|{\mathfrak u} - {\mathfrak J}_h {\mathfrak u}\|^2_{\ell^2} \lesssim h^{-2} p^4 \|u\|^2_{L^2(\Gamma)} ,
\end{align*}
so that an interpolation argument (which we assume to be admissible!)  produces 
\begin{align*}
\|{\mathfrak u} - {\mathfrak J}_h {\mathfrak u}\|^2_{\ell^2} &\lesssim p^3 h^{-1} \|u\|^2_{\widetilde H^{1/2}(\Gamma)}. 
\end{align*}
Hence, 
\begin{align*}
\|{\mathfrak u}\|^2_{\ell^2} &\lesssim 
\|{\mathfrak u} - {\mathfrak J}_h {\mathfrak u}\|^2_{\ell^2} + 
\|{\mathfrak J}_h {\mathfrak u}\|^2_{\ell^2} 
\lesssim p^3 h^{-1} \|u\|^2_{\widetilde H^{1/2}(\Gamma)} + h^{-2} \|u\|^2_{L^2(\Gamma)} 
\lesssim \left( p^3 h^{-1} + h^{-2} \right) \|u\|^2_{\widetilde H^{1/2}(\Gamma)}. 
\end{align*}
Putting things together, we get 
$$
p^{1/2} h^{-1} \|u\|^2_{\widetilde H^{1/2}(\Gamma)} \lesssim \|{\mathfrak u}\|^2_{\ell^2} 
\lesssim \left( p^3 h^{-1} + h^{-2} \right) \|u\|^2_{\widetilde H^{1/2}(\Gamma)},
$$
which in turn gives the condition number estimate 
$$
\kappa(\widetilde D^p_h) \lesssim p^{5/2} + h^{-1} p^{-1/2}. 
$$
For the $H^1$-condition number, we note the estimates 
\begin{align*}
\|u\|^2_{H^1(\Gamma)}  &= 
|u|^2_{H^1(\Gamma)}  + 
\|u\|^2_{L^2(\Gamma)}  \lesssim p \|{\mathfrak u}\|^2_{\ell^2} + h^2 p^{-2} \|{\mathfrak u}\|^2_{\ell^2} 
\lesssim p \|{\mathfrak u}\|^2_{\ell^2},  \\
\|{\mathfrak u}\|^2_{\ell^2} &\lesssim 
\|{\mathfrak u} - {\mathfrak J}_h {\mathfrak u}\|^2_{\ell^2} + 
\|{\mathfrak J}_h {\mathfrak u}\|^2_{\ell^2} \lesssim p^2 \|u\|^2_{H^1(\Gamma)} + h^{-2} \|u\|^2_{L^2(\Gamma)}
\lesssim \left( p^2 + h^{-2} \right)\|u\|^2_{H^1(\Gamma)}, 
\end{align*}
so that we get 
$$
p^{-1} \|u\|^2_{H^1(\Gamma)} \lesssim \|{\mathfrak u}\|^2_{\ell^2} \lesssim \left( p^2 + h^{-2}\right) \|u\|^2_{H^1(\Gamma)}
$$
\section*{References}
\bibliographystyle{elsart-num-sort}
\bibliography{asm_bibliography}

\end{document}